\documentclass[a4paper,12pt]{amsart}
\usepackage{geometry}                		
\usepackage{graphicx}				
\usepackage{amssymb}


\usepackage{latexsym,exscale,enumerate,amsfonts,amssymb,mathtools}
\usepackage{amsmath,amsthm,amsfonts,amssymb,amscd,stmaryrd,textcomp}
\usepackage[normalem]{ulem}
\usepackage{young}
\usepackage{thmtools,xcolor}
\usepackage{easybmat}

\usepackage{tikz-cd}


\definecolor{colormy}{rgb}{0.8,0.05,0.05}
\definecolor{mycolor}{rgb}{0.25,0.99,0.25}

\addtolength{\textwidth}{1.5cm}

\usepackage[all]{xy}
\SelectTips{cm}{}

\usepackage{tikz}
\usetikzlibrary{decorations.markings}
\usetikzlibrary{decorations.pathreplacing}
\usetikzlibrary{arrows,shapes,positioning}
\tikzstyle directed=[postaction={decorate,decoration={markings,
    mark=at position #1 with {\arrow{>}}}}]
\tikzstyle rdirected=[postaction={decorate,decoration={markings,
    mark=at position #1 with {\arrow{<}}}}]

\usepackage{hyperref}

\newcommand{\Hom}{\mathrm{Hom}}
\newcommand{\End}{\mathrm{End}}
\newcommand{\Ext}{\mathrm{Ext}}

\newcommand{\soc}{\mathrm{soc}}
\newcommand{\Char}{\mathrm{char}}

\newcommand{\Cha}{\mathrm{ch}}
\newcommand{\mo}{\mathrm{mod}}

\newcommand{\T}{\mathcal{T}}
\newcommand{\St}{\mathcal{S}t}

\def\C{{\mathcal C}}
\def\Co{{\mathbb C}}
\def\N{{\mathbb{Z}_{\geq 0}}}
\def\Z{{\mathbb Z}}

\def\O{\mathcal{O}}
\def\P{\mathcal{P}}

\def\pr{\mathrm{pr}}
\def\Ind{\mathrm{Ind}}
\def\B{\mathcal {B}}
\theoremstyle{definition}
\newtheorem{thm}{Theorem}[section]
\newtheorem{cor}[thm]{Corollary}
\newtheorem{lem}[thm]{Lemma}
\newtheorem{prop}[thm]{Proposition}

\theoremstyle{definition}
\newtheorem{rem}[thm]{Remark}
\newtheorem{examplecounter}{Example}

\numberwithin{equation}{section}



%
%
%


\title{Tilting modules and cellular categories}

\author{Henning Haahr Andersen}

\email{h.haahr.andersen@gmail.com}

\date{}							

\begin{document}

\begin{abstract}
In this paper we study categories of tilting modules in ``quasi-hereditary-like" categories, extending joint work with Stroppel and Tubbenhauer. We show that such categories form strictly object-adapted cellular categories. Then we use this fact to specify a subset of our cellular basis elements, which generates all morphisms. We describe our methods in details for tilting modules of a reductive algebraic group $G$ over a field of characteristic bigger than $2$, and also of its subgroup schemes $G_rT$. As concrete examples we exemplify our constructions for $G = SL_2$, and for tilting modules in the ordinary BGG category $\O$.
\end{abstract}

\maketitle
\tableofcontents



\section{Introduction}
Let $G$ be a reductive algebraic group over a field $k$ of characteristic $p>0$. We shall use the same methods as those developed in \cite{AST} to explore the tilting modules for $G$.  Note that in \cite{AST} we focus on the quantum case. However, the results (we shall mainly need those in Section 3 of \cite{AST}) carry over verbatim to the modular case. In particular the analogue of \cite{AST}, Theorem 3.9 
proves that for each tilting module $Q$ for $G$ the endomorphism algebra $\End_G(Q)$ carries a natural cellular structure. The present paper explores this fact and proves further results about category $\T(G)$ of tilting modules for $G$, more precisely about the morphisms in $\T(G)$. 

Our first result is a generalization of \cite{AST}, Theorem 3.9: We prove that the additive category $\T(G)$ consisting of all tilting modules for $G$ is an object-adapted cellular category in the sense of \cite{W} and \cite{EL}. Using the terminology in \cite{BS} this means that $\T(G)$ is the module category for an upper finite based quasi-hereditary algebra.  A very simple but nevertheless useful consequence of the cellularity of $\T(G)$ enable us then to single out a collection of special cellular basis elements in the $\Hom$-spaces between indecomposable tilting modules which generates all morphisms in $\T(G)$.

Secondly, we extend these results to include the case where $G$ is replaced by the subgroup scheme $G_rT$. Here $G_r$ denotes the $r$-th Frobenius kernel in $G$ and $T$ is a maximal torus. A module is tilting for $G_rT$ if and only if it is projective, so in this case we get a similar cellular structure on the category $\P_r$ of projective $G_rT$-modules. The periodicity in this category allows us in this case to exhibit a finite set of special cellular basis elements, which generates all morphisms in $\P_r$. 

It is well-known that the projective modules for $G_r$ (we may even take $r = 1$) contain the secrets of how to find the simple characters for $G$, see e.g. \cite{AJS}, \cite{RW}, and \cite{S}. This fact together with the close connection (at least for $p$ not too small, see Remark 4.2) between the categories $\T(G)$ and $\P_r$ made us interested in studying the properties of the morphisms in these two categories.

Our approach to these results works quite generally for ``quasi-hereditary-like" categories, for instance to tilting modules in the BGG category $\O = \O(\mathfrak g)$ for a complex semisimple Lie algebra $\mathfrak g$, and to tilting modules for the quantum group $U_q(\mathfrak g)$ at a root of unity $q$. We have (except for a short section dealing with category $\O$) chosen to formulate and prove our results in the modular case. The transition to tilting modules for a quantum group at a root of unity is straightforward.  

Once we have a set of generators for the morphisms in $\T(G)$ and $\P_r$ we want to find the relations among them. As we shall point out some relations are rather obvious. However, to get the full set of such relations is a much harder problem. We illustrate the complexity of this problem by giving some examples: We treat first the tilting modules in category $\O$ for $\mathfrak{sl}_3$,  and then in some details the projective $G_rT$-modules (especially for $r = 1, 2$) when $G = SL_2$. In both these cases the indecomposable tilting modules are multiplicity free. This simplifies considerably the problem but as we shall see even in these categories to get the relations among our generating modules requires rather detailed analysis of the structures of indecomposable tilting modules.

The group $SL_2$ is a natural starting point as well as a good test case, cf.  \cite{AT} which describes the category of tilting modules for the quantum group for $sl_2$ at a complex root of unity as the module category for a quotient of the zigzag quiver algebra. The recent preprint \cite{TW} which contains a deep analysis of the tilting category for $SL_2$ going much further than we do here (exploring for instance natural transformations between modular Jones--Wenzl projectors) was one of the motivating factors for me to undertake the present work. 

The paper is organized as follows. In Section 2 we deal with the category $\T(G)$ of tilting modules for a reductive algebraic group $G$. We introduce basic notations and collect the various results on representations of $G$ that we need to establish the cellularity of $\T(G)$. As already pointed out all ingredients needed for this can be found in \cite{AST}. But since we did not actually state the result in the generality we need here (in \cite{AST} we focused on proving  that endomorphism rings of  tilting modules are cellular algebras), we have carefully given the cellular data for $\T(G)$ and proved that the list of required properties are all satisfied. We also check that with a little more care  in our choice of bases we can obtain that  $\T(G)$ is a strictly object-adapted cellular category, a concept introduced in \cite{EL}. Finally, we exhibit among our cellular basis elements a set of special ones and prove that they generate all the morphisms in $\T(G)$.

Section 3 takes us on a detour to characteristic zero: We show that our procedure applies to the category of tilting modules in the BGG category $\O$. In particular this gives a set of generators for the morphisms in the principal block $\T(0)$ of this category. We take the opportunity to explain how we can significantly refine this set of generators in the case where all tilting modules in  $\T(0)$ are multiplicity free. As an illustrative example we treat the case $\mathfrak g = \mathfrak sl_3$. In this example we furthermore find the relations satisfied by our (refined) set of generators.

In Sections 4 and 5 we return to the modular case but now we fix an $r \geq 1$ and consider the subgroup scheme $G_rT$ of $G$. The tilting category for $G_rT$ coincides with the category $\P_r$ of projective $G_rT$-modules. We check that this category is cellular. It is also a strictly object-adapted cellular category except that the poset involved do not satisfy dcc (as is required in the definition given in \cite{EL}). This means that it is the module catogory for an essentially finite based quasi-hereditary algebra, cf. \cite{BS}.  Again we get a set of generators for the morphisms in $\P_r$, and in this case, up to tensoring with $1$-dimensional $G_rT$-modules, our set of generators is finite. 

The final section is devoted to the case $G = SL_2$. Here the principal block in the category $\P_1$ is a quotient of the zigzag quiver algebra with nodes parametrized by $\Z$. The relations are the same as the ones describing the tilting modules for the quantum group $U_q(\mathfrak sl_2)$ with parameter equal to a complex root of unity, cf. \cite{AT}. Now the complexity of $\P_r$ grows rapidly with $r$. However, for $G = SL_2$ the indecomposable  tilting modules for $G_rT$ are multiplicity free for all $r$. This allows us to describe a procedure for giving a nice set of generators for the morphisms in $\P_r$. We treat the case $\P_2$ in some details and in this case we also work out the relations among the generators.

\bigskip

\noindent \textbf{Acknowledgements:} I would like to thank D. Tubbenhauer for several email discussions and for making me aware of the paper \cite{EL}. I'm also grateful to the referee for his/her many useful comments and suggestions.

\section{Reductive  algebraic groups and their finite dimensional modules} \label{G}

We start by introducing some basic notations. The readers can find details in \cite{RAG}, Chapters II.1-3, II.11, and II.E (we point out the few instances where we deviate for the notation used there). 

\subsection{Basic Notation}Throughout this paper $G$ will denote a connected reductive algebraic group over an algebraically closed field $k$. We assume that $p = \Char \, k$ is positive.

We choose a maximal torus $T$ in $G$ and denote by $X =X(T)$ its character group. In the root system $R \subset X$ for $(G,T)$ we fix a set of positive roots $R^+$ and denote by $X^+ \subset X$  the corresponding cone of dominant characters. Then $R^+$ defines an ordering $\leq$ on $X$ and $X^+$. It also determines uniquely a Borel subgroup $B$ whose roots are the set of negative roots $-R^+$. The opposite Borel subgroup of $B$ will be denoted $B^+$. We have $B = UT$ and $B^+ = U^+T$, where $U$, respectively $U^+$, is the unipotent radical of $B$, respectively $B^+$.

Denote by $S$ the set of simple roots in $R^+$. If $\alpha$ is a root we set $\alpha^\vee$ equal to the corresponding coroot. Then the dominant cone $X^+$ is given by 
$$ X^+ = \{\lambda \in X | \langle \lambda, \alpha^\vee \rangle \geq 0 \text { for all } \alpha \in S\}.$$
We set $\rho = {\frac{1} {2}}  \sum_{\alpha \in R^+} \alpha$ and assume that $\rho \in X$. The fundamental alcove in $X^+$ is
$$ A =  \{\lambda \in X| 0 < \langle \lambda + \rho, \alpha^{\vee} \rangle < p \text { for all } \alpha \in R^+\}.$$
The closure $\bar A$ is the corresponding set with the inequalities replaced by $\leq$.

We set $\C(G)$ equal to the category of finite dimensional $G$-modules. 
If $K$ is a closed subgroup (or subgroup scheme) of $G$ we write similarly $\C(K)$ for the category of finite dimensional $K$-modules. The elements of $X$ identifies with the set of $1$-dimensional modules in $\C(T)$. These modules all extend uniquely to $B$ as well as to $B^+$. If $\lambda \in X$ we abuse notation and write also $\lambda $ for the $1$-dimensional $T$, $B$, or $B^+$-module determined by $\lambda$ . 

Recall that the category $\C(T)$ is semisimple, i.e. each $M \in \C(T)$ splits into a direct sum of $\lambda$'s. We set
$$ M_\lambda =\{m \in M | tm = \lambda(t) m, t \in X\},$$
and call this the $\lambda$ weight space of $M$. We say that $\lambda$ is a weight of $
M$ if $M_\lambda \neq 0$. Then we have 
$$ M = \oplus_\lambda M_\lambda,$$
with the sum running over all weights of $M$. We denote by $\Z[X]$ the group algebra of the additive group $X$ and define 
the character of M by
$$ \Cha \, M = \sum_\lambda \dim M_\lambda \, e^\lambda \in \Z[X].$$
Here the sum again runs over all weights of $M$.

\subsection{Standard, costandard and simple modules in $\C(G)$}
Let $H \leq K$ be closed subgroup schemes in $G$ and let $\Ind_H^K : \C(H) \rightarrow \C(K)$ denote the corresponding induction functor. In general this functor does not preserve finite dimensionality, but we shall only consider cases where it does. This in particular includes the case where $H = B$ and 
$K = G$, which we now consider. 

Let $\lambda \in X$. Then we set 
$$ \nabla(\lambda) = \Ind_B^G \lambda.$$
Recall that $\nabla(\lambda) \neq 0$ if and only if $\lambda \in X^+$. When $\lambda \in X^+$ the socle of $\nabla(\lambda)$ is simple and we shall write 
$$ L(\lambda) = \soc_G \nabla(\lambda).$$
Then the set $(L(\lambda))_{\lambda \in X^+}$ is a complete set of pairwise non-isomorphic simple objects in $\C(G)$. Moreover, by construction we have for $\lambda \in X^+$
$$ L(\lambda)_\lambda = \nabla(\lambda)_\lambda = k,$$
and all weights $ \mu$ of $\nabla(\lambda)$ (and hence also of $L(\lambda)$) satisfy $\mu \leq \lambda$. 

If $M \in \C(G)$ we denote by $[M:L(\lambda)]$ the composition factor multiplicity of $L(\lambda)$ in $M$. Then the above shows that
$$ [\nabla(\lambda): L(\lambda)] = 1 \text { and if } [\nabla(\lambda):L(\mu)] \neq 0 \text { then } \mu \leq \lambda.$$

Recall from \cite{RAG} II.1.16 that
 there is antiautomorphism $\tau$ of $G$, which is an involution, restricts to the identity on $T$ and takes each root subgroup $U_\alpha$  into $U_{-\alpha}, \; \alpha \in R$.  Then we define $^\tau M$ to be the linear dual $M^*$ of $M$ with $G$-action given by
$$ gh: m \mapsto h(\tau(g)m) \text { for all } g \in G, h \in M^*, m \in M.$$
The duality functor $M \mapsto \, ^\tau M$ on $\C(G)$ preserves weights and dimensions of weight spaces (i.e. $\Cha M = \Cha  \,^\tau M$). Therefore we see that $^\tau L(\lambda) \simeq L(\lambda)$ for all $\lambda \in X^+$. 

Define now
$$ \Delta(\lambda) =\, ^\tau\nabla(\lambda).$$
This is often called the Weyl module (with highest weight $\lambda$) because its character is given by $\Cha \Delta(\lambda) = \chi(\lambda)$, where $\chi(\lambda)$ is the Weyl character, i.e. the character of $\Delta (\lambda)$ is independent of $p$ and the same as in characteristic $0$.

A Weyl module is usually not simple (in contrast with what happens in characteristic zero). However, for some special weights the corresponding Weyl module is selfdual and simple. This includes the following cases:
\begin{equation}\label{simple Weyl}
 \text { If }  \lambda \in \bar A \cap X^+ \text { or if } \lambda = (p^r-1)\rho   \text { for some $r \geq 1$  then } \nabla(\lambda) = \Delta(\lambda) = L(\lambda).
\end{equation}
This follows from the strong linkage principle, \cite{SLP}, Theorem 1.

We set $St_r = L((p^r-1)\rho)$ and call this module the $r$-th Steinberg module. Note that by (\ref{simple Weyl}) we have also $St_r = \Delta ((p^r-1)\rho) = \nabla ((p^r-1)\rho).$

\subsection{ Tilting modules for $G$}
A crucial relation between Weyl and dual Weyl modules is:
\begin{equation} \label{ext-vanishing} 
\text { Let } \lambda, \mu \in X^+. \text { Then } \Ext_G^{i}(\Delta(\lambda), \nabla(\mu)) = \begin{cases} {k \text { if } \mu = \lambda \text { and } i=0,}\\ 0 \text { otherwise.}\end{cases} 
\end{equation}

A module $M \in \C(G)$ is said to have a $\nabla$-, respectively a $\Delta$-filtration, if there exists a sequence of submodules
$$ 0 = M_0 \subset M_1 \subset \cdots \subset M_r = M$$
with $M_i/M_{i-1} \simeq  \nabla (\lambda_i)$, respectively $\Delta(\mu_i)$, for some $\lambda_i, \mu_i \in X^+$, $i= 1,2, \cdots, r$.

If $M$ has a $\nabla$-filtration as above we set 
$$ (M:\nabla(\lambda)) = |\{i | \lambda_i = \lambda\}|,$$
and in case of a $\Delta$-filtration we define $(M:\Delta(\mu))$ analogously. It follows from (\ref{ext-vanishing})
 that we have:
\begin{equation} \label{nabla-mult}
\text { If $M$ has a $\nabla$-filtration then } (M:\nabla(\lambda))= \dim_k \Hom_G(\Delta(\lambda), M) \text { for all } \lambda \in X^+.
\end{equation} 
Likewise:
\begin{equation} \label{delta-mult}
\text { If $M$ has a $\Delta$-filtration then } (M:\Delta(\mu)) = \dim_k \Hom_G(M, \nabla(\mu)) \text { for all } \mu \in X^+.
\end{equation}

We say that $Q \in \C(G)$ is tilting if $Q$ has both a $\nabla$- and a $\Delta$-filtration. It is easy to check that if $Q$ is tilting then 
$$ (Q:\nabla(\lambda)) = (Q:\Delta(\lambda)) \text { for all } \lambda \in X^+.$$

Recall that we have the following classification of indecomposable tilting modules in $\C(G)$: For each $\lambda \in X^+$ there is a unique (up to isomorhisms) indecomposable tilting module $T(\lambda)$ with $T(\lambda)_\lambda = k $ and $T(\lambda)_\mu = 0$ unless $\mu \leq \lambda$, and this accounts for all indecomposable tilting modules. 

Hence if $Q \in \C(G)$ is tilting we have unique non-negative integers $(Q:T(\lambda))$ such that
$$Q = \bigoplus_{\lambda \in X^+} T(\lambda)^{\oplus (Q:T(\lambda))}.$$
Of course, $(Q:T(\lambda)) = 0$ for all but finitely many $\lambda \in X^+$. 

\begin{examplecounter} Note that (\ref{simple Weyl}) implies 
\begin{enumerate}
\item  $T(\lambda) = L(\lambda)$ for all $\lambda \in \bar A \cap X^+$.
\item  $T((p^r-1)\rho) = St_r$ for all $r \in \N$.
\end{enumerate}
\end{examplecounter}
For later use we record the following easy consequence of (\ref{ext-vanishing}), see \cite{A98}, Proposition 2.5 (or rather its proof):
\begin{prop}\label{split}
Let $Q \in \T(G)$ and $\lambda \in X^+$. Suppose $\lambda$ is a maximal weight of $Q$ and $\phi: T(\lambda) \rightarrow Q$ is a homomorphism, which is non-zero on $T(\lambda)_\lambda$. Then $\phi$ is a split injection.
\end{prop}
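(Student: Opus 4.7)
The strategy is to decompose $Q$ into indecomposable tilting summands and reduce the statement to an analysis of endomorphisms of $T(\lambda)$.

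First, using the classification recalled above, write $Q = \bigoplus_{\mu \in X^+} T(\mu)^{\oplus (Q:T(\mu))}$. If some $T(\mu)$ with $\mu > \lambda$ occurred as a summand, then $\mu$ (being a weight of $T(\mu)$) would be a weight of $Q$ strictly larger than $\lambda$, contradicting the maximality of $\lambda$. Hence the summands split as $Q \cong T(\lambda)^{\oplus n} \oplus Q'$, where $Q'$ is a direct sum of $T(\mu)$'s with $\mu \not\geq \lambda$. Since every weight of such $T(\mu)$ satisfies $\mu' \leq \mu$, no $T(\mu)$ in $Q'$ has $\lambda$ as a weight, so $Q'_\lambda = 0$.

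Correspondingly, decompose $\phi = (\phi_1, \dots, \phi_n, \phi')$ with $\phi_i \in \End_G(T(\lambda))$ and $\phi' \in \Hom_G(T(\lambda), Q')$. The assumption that $\phi$ is non-zero on $T(\lambda)_\lambda$, combined with $Q'_\lambda = 0$, forces at least one $\phi_i$ to be non-zero on the one-dimensional space $T(\lambda)_\lambda$. I would next invoke the standard fact that $\End_G(T(\lambda))$ is a local ring (because $T(\lambda)$ is indecomposable and finite dimensional), and that an element $\psi \in \End_G(T(\lambda))$ is a unit if and only if it is non-zero on $T(\lambda)_\lambda$; this is because the maximal ideal of a finite-dimensional local algebra consists of nilpotents, while if $\psi$ acts on $T(\lambda)_\lambda = k$ by a non-zero scalar $c$, then $\psi^m$ acts on $T(\lambda)_\lambda$ by $c^m \neq 0$ for all $m$, ruling out nilpotence.

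After relabeling, assume $\phi_1$ is an isomorphism of $T(\lambda)$. Define a retraction $\psi : Q \to T(\lambda)$ as the composition of the projection $Q \twoheadrightarrow T(\lambda)$ onto the first $T(\lambda)$-summand with $\phi_1^{-1}$. Then $\psi \circ \phi = \phi_1^{-1} \circ \phi_1 = \mathrm{id}_{T(\lambda)}$, so $\phi$ is split injective, as required. The only point that requires some care is the locality of $\End_G(T(\lambda))$ and the resulting characterization of its units via the highest weight space; everything else is bookkeeping on the tilting decomposition.
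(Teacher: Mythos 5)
Your argument is correct, but it takes a different route from the one the paper points to. The paper presents the proposition as an ``easy consequence of (\ref{ext-vanishing})'' via the proof of \cite{A98}, Proposition 2.5: there one works directly with filtrations, constructing a retraction $Q \rightarrow T(\lambda)$ by first producing a map $Q \rightarrow \nabla(\lambda)$ that is non-zero on $Q_\lambda$ (using maximality of $\lambda$ and the $\nabla$-filtration of $Q$) and then lifting it through the surjection $T(\lambda) \twoheadrightarrow \nabla(\lambda)$, the obstruction lying in an $\Ext^1$-group that vanishes by (\ref{ext-vanishing}) because the kernel has a $\nabla$-filtration while $Q$ has a $\Delta$-filtration; the composite $T(\lambda) \rightarrow Q \rightarrow T(\lambda)$ is then non-zero on the highest weight line and hence invertible. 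You instead invoke the Krull--Schmidt decomposition $Q = \bigoplus_\mu T(\mu)^{\oplus(Q:T(\mu))}$ recalled in Subsection 2.3, observe that maximality of $\lambda$ rules out summands $T(\mu)$ with $\mu > \lambda$ and that the remaining non-$T(\lambda)$ summands have trivial $\lambda$-weight space, and then reduce to the fact that an endomorphism of the indecomposable module $T(\lambda)$ acting non-trivially on $T(\lambda)_\lambda$ is a unit of the local ring $\End_G(T(\lambda))$. All of these steps are sound, and given that the paper explicitly recalls the classification of indecomposable tilting modules beforehand, your proof is legitimate within its framework. The trade-off is worth noting: your route is pure bookkeeping once the decomposition is granted, whereas the homological route is self-contained in the sense that it does not presuppose that decomposition --- indeed, the standard proof that every tilting module splits into $T(\mu)$'s is essentially the argument of \cite{A98} that you are bypassing, so outside the context of this paper your proof would risk circularity.
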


\subsection{The category of tilting modules for $G$}\label{tiltcat}

Let $\T(G)$ be the full subcategory of $\C(G)$ consisting of all tilting modules for $G$. Clearly, $\T(G)$ is an additive (but not abelian) category. It is a non-trivial fact, see  \cite{Ma}, Theorem 2, or alternatively  \cite{RAG}, Proposition II.E.7, that the tensor product of two tilting modules is again tilting. In this way $\T(G)$ thus becomes a monoidal category. 

It is clear that the dual of a tilting module is also tilting, i.e. the duality functor $M \mapsto \, ^\tau M$ on $\C(G)$ restricts to an endofunctor on $\T(G)$. Moreover, since this functor preserves characters we see that $^\tau T(\lambda) \simeq T(\lambda)$ for all $\lambda \in X^+$.  

We now choose for each $\lambda \in X^+$ an isomorphism $\phi_\lambda : T(\lambda) \rightarrow \, ^\tau T(\lambda)$. Let $Q \in \T(G)$. As $Q$ is a direct sum of $T(\lambda)$'s these isomorphims give us an isomorphism $\phi_Q : Q \rightarrow \, ^\tau Q$. We say that $\phi_Q$ is symmetric if it satisfies $\phi_Q(v)(v') = \phi_Q(v')(v)$ for all $v, v' \in Q$. 

Define then a contravariant functor $\dagger : \T(G) \rightarrow \T(G)$ by letting it be the identity on objects and given by the following recipe on morphisms.
$$ f^{\dagger} = \phi_P^{-1} \circ f^* \circ \phi_Q \text { for any } f\in \Hom_G(Q,P), \; P, Q \in \T(G).$$
Here $f^*$ denotes the linear dual of $f$. Clearly, $f^* \in \Hom_G(^\tau Q,\, ^\tau P)$. It is clear that this functor is contravariant. Indeed, let $P, Q, R \in \T(G)$. The definition immediately gives that if $ f \in \Hom_G(P, Q)$ and $g \in \Hom_G(Q, R)$ then 
$$ (g \circ f)^\dagger = f^\dagger \circ g^\dagger.$$

\begin{prop}\label{dagger}
Suppose $\phi_\lambda$ is symmetric for every $\lambda \in X^+$. Then $\dagger$ is an involution.
\end{prop}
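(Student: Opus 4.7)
The plan is a direct, essentially formal computation. I would first observe that the hypothesis propagates to arbitrary objects: for $Q = \bigoplus_\lambda T(\lambda)^{\oplus n_\lambda}$, the constructed $\phi_Q$ is the block-diagonal sum of copies of the chosen $\phi_\lambda$'s, and the bilinear form $(v,v') \mapsto \phi_Q(v)(v')$ decomposes along the summands, so $\phi_Q$ is symmetric for every $Q \in \T(G)$. It thus suffices to verify $(f^\dagger)^\dagger = f$ for an arbitrary morphism $f : Q \to P$ in $\T(G)$.

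The key step will be a single lemma isolating the role of symmetry: if $\phi : M \to {}^\tau M$ is a symmetric $G$-isomorphism and $\mathrm{ev}_M : M \xrightarrow{\sim} M^{**}$ denotes the canonical evaluation map, then $\phi^{*} \circ \mathrm{ev}_M = \phi$. The proof is a two-line unraveling,
\[
\phi^{*}(\mathrm{ev}_M(v'))(v) \;=\; \mathrm{ev}_M(v')(\phi(v)) \;=\; \phi(v)(v') \;=\; \phi(v')(v),
\]
the last equality being the symmetry hypothesis. Here $\mathrm{ev}_M$ is $G$-equivariant because $\tau^2 = \mathrm{id}$ canonically identifies the double twist ${}^{\tau\tau} M$ with $M$. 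From $\phi^{*} = \phi \circ \mathrm{ev}_M^{-1}$ one then reads off the collapsing identities $\phi^{-1} \circ \phi^{*} = \mathrm{ev}_M^{-1}$ and $(\phi^{-1})^{*} \circ \phi = \mathrm{ev}_M$.

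Once the lemma is in hand, I would unravel the definition of $\dagger$ twice and apply $(g \circ h)^{*} = h^{*} \circ g^{*}$, producing
\[
(f^\dagger)^\dagger \;=\; \phi_P^{-1} \circ \phi_P^{*} \circ f^{**} \circ (\phi_Q^{-1})^{*} \circ \phi_Q \;=\; \mathrm{ev}_P^{-1} \circ f^{**} \circ \mathrm{ev}_Q \;=\; f,
\]
where the middle equality applies the lemma to $\phi_P$ and $\phi_Q$ and the last is naturality of $\mathrm{ev}$. Contravariance of $\dagger$ was already recorded immediately before the proposition, so this finishes the argument. The only point requiring genuine care is the bookkeeping of duals and twists — in particular confirming that $\mathrm{ev}_M$ is a $G$-homomorphism under the appropriate twisted actions on $M^{*}$ and $M^{**}$ — but this reduces to $\tau$ being an involution and presents no real obstacle; the substantive content of the proof is entirely in the symmetry-equals-self-duality lemma above.
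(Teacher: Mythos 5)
Your proof is correct and follows essentially the same route as the paper's: the identity $\phi^{*}\circ \mathrm{ev} = \phi$ as a reformulation of symmetry is exactly the paper's key claim, and the final collapse to naturality of the evaluation map is the paper's concluding diagram. The only difference is that you spell out the propagation of symmetry from the $\phi_\lambda$ to a general $\phi_Q$ via the block-diagonal decomposition, a point the paper leaves implicit.
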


\begin{proof}
Let $P \in \T(G)$. We claim
\begin{equation}\label{anti}
 \phi_P = \phi_P^* \circ Ev_P.
\end{equation}
This claim says that  $\phi_P(v)(v') = \phi_P(v')(v)$ for all $v, v' \in P$, i.e. that  $\phi_P$ is symmetric. So it follows from our assumption. 

Let also $Q \in \T(G)$ and take $ f \in \Hom_G(P, Q)$. To see that $f^{\dagger \dagger}= f$ we have to check $\phi_Q^{-1} \circ \phi_Q^*\circ  f^{**}\circ (\phi_P^{-1})^* \circ \phi_P = f$. Using (\ref{anti}) this is equivalent to the commutativity of the diagram
\[
\begin{tikzcd}
P \arrow{r}{f} \arrow[swap]{d}{Ev_P} & Q \arrow{d}{Ev_Q}\\
P^{**} \arrow[swap]{r}{f^{**}} & Q 
\end{tikzcd}
\]
and this is obvious.
\end{proof}

\begin{rem}\label{sym}
If $p > 2$ it is easy to satisfy the assumption on symmetry  in Proposition \ref{dagger}.  In fact, if $\phi_\lambda : T(\lambda) \rightarrow \, ^\tau T(\lambda)$ is an arbitrary isomorphism we define $\tilde \phi_\lambda$ by $\tilde \phi_\lambda (t)(t') = \phi_\lambda (t)(t') +  \phi_\lambda (t')(t),\; t,t' \in T(\lambda)$. Then $\tilde \phi_\lambda$ is clearly a symmetric homomorphism. It follows from Proposition \ref{split} that it is an injection (and hence an isomorphism since $\dim T(\lambda) = \dim \, ^\tau T(\lambda))$ if  it is non-zero on $T(\lambda)_\lambda$.  However, if $v \neq 0$ belongs to this $1$-dimensional weight space, then $\tilde \phi (v)(v) = 2  \phi (v)(v) \neq 0$.
\end{rem}

\subsection{Cellular categories} \label{cellular categories}

By the modular analogue of \cite{AST}, Theorem 3.9  we know that  if $Q \in \T(G)$ then the endomorphism algebra    $\End_G(Q)$ has a natural structure making it into a cellular algebra (in the sense of \cite{GL}, Definition 1.1). We shall now prove more generally that $\T(G)$ is a cellular category (see \cite{W}, Definition 2.1  for the definition of a cellular category or alternatively read it off from the proof of the following theorem). 
As will become clear all the necessary work for proving this result was already done in \cite{AST}, Section 3.

\begin{prop} \label{cellularity for G}
Suppose $p > 2$. Then $\T(G)$ is a cellular category.
\end{prop}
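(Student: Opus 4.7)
The plan is to lift the cellular algebra structure on the individual endomorphism rings $\End_G(Q)$ — provided by the modular analogue of \cite{AST}, Theorem 3.9 — to a cellular category structure on the entire category $\T(G)$ in the sense of \cite{W}, Definition 2.1. The required cellular datum is: the poset $(X^+, \leq)$; for each tilting $Q$ and each $\lambda \in X^+$, an index set $M(Q,\lambda)$ of cardinality $(Q:\nabla(\lambda))$; and the anti-involution on morphisms given by the dagger functor from Proposition \ref{dagger}. The dagger functor is available in the present setting precisely because the hypothesis $p>2$ allows a symmetric choice of $\phi_\lambda$ via Remark \ref{sym}, and only then is $\dagger$ an involution.

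The first step is to fix, for each $Q \in \T(G)$ and each $\lambda \in X^+$, a basis $\{\alpha^Q_S : S \in M(Q,\lambda)\}$ of $\Hom_G(\Delta(\lambda), Q)$. Applying $\dagger$ produces a basis $\{\beta^Q_S := (\alpha^Q_S)^\dagger : S \in M(Q,\lambda)\}$ of $\Hom_G(Q, \nabla(\lambda))$ indexed by the same set, which has the correct cardinality since $(Q:\nabla(\lambda)) = (Q:\Delta(\lambda))$ for tilting $Q$. For each tuple $(\lambda, S, T)$ with $S \in M(Q,\lambda)$ and $T \in M(P,\lambda)$, I would define the cellular basis element $c^\lambda_{S,T}: Q \to P$ by routing through $T(\lambda)$: lift $\beta^Q_S$ along the canonical surjection $T(\lambda) \twoheadrightarrow \nabla(\lambda)$ (from the top of a $\nabla$-filtration of $T(\lambda)$) to $\hat\beta^Q_S: Q \to T(\lambda)$; extend $\alpha^P_T$ along the canonical embedding $\Delta(\lambda) \hookrightarrow T(\lambda)$ (from the bottom of a $\Delta$-filtration of $T(\lambda)$) to $\hat\alpha^P_T: T(\lambda) \to P$; and set $c^\lambda_{S,T} := \hat\alpha^P_T \circ \hat\beta^Q_S$. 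Both the lift and the extension exist by (\ref{ext-vanishing}) applied to the appropriate filtration quotients, and the ambiguity in each lands inside the ideal spanned by morphisms that factor through $T(\mu)$'s with $\mu < \lambda$ in dominance, which will be the correct cellular ideal.

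Next I would verify the three cellular axioms. The basis property reduces to the dimension equality $\dim \Hom_G(Q,P) = \sum_\lambda (Q:\Delta(\lambda))(P:\nabla(\lambda))$, which follows by applying $\Hom_G(-,P)$ to a $\Delta$-filtration of $Q$ together with (\ref{ext-vanishing}); linear independence of the $c^\lambda_{S,T}$ then follows by applying the \cite{AST} cellular construction to $\End_G(Q \oplus P)$ and restricting to the off-diagonal block $\Hom_G(Q,P)$. The identity $(c^\lambda_{S,T})^\dagger = c^\lambda_{T,S}$ is immediate from the construction, because $\dagger$ is a contravariant involution exchanging the roles of the $\alpha$'s and the $\beta$'s. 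The cellular multiplication rule — for $g : R \to Q$, $g \circ c^\lambda_{S,T} \equiv \sum_{S'} r_g(S',S)\, c^\lambda_{S',T}$ modulo the cellular ideal of lower weights, with $r_g$ independent of $T$ and of $P$ — is obtained by expanding $g \circ \hat\beta^Q_S$ in the basis of $\Hom_G(R, T(\lambda))$ and invoking Proposition \ref{split} to control the interactions with higher cells.

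The main obstacle I anticipate is the uniform verification of the cellular multiplication axiom across all pairs of objects: while \cite{AST} handles this rigorously for a single $\End_G(Q)$, the categorical version requires the lifts $\hat\beta^Q_S$ and $\hat\alpha^P_T$ to be chosen coherently enough that the structure constants $r_g(S',S)$ are genuinely independent of the target object $P$ and the index $T$. One clean way to arrange this is to make all lifts inside a single sufficiently large ``universal'' tilting module (into which each $Q$ appears as a summand) and restrict, but the bookkeeping needed to align this with the Graham--Lehrer-type ideal filtration is where most of the work of the proof sits.
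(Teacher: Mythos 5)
Your proposal is correct and follows essentially the same route as the paper: the same cell datum (poset $(X^+,\leq)$, index sets of size $(Q:\Delta(\lambda))$, basis elements obtained by lifting maps from $\Delta(\lambda)$, resp.\ to $\nabla(\lambda)$, through $T(\lambda)$ and composing, with the $\dagger$-involution supplying axiom C-II), and the same verification of the basis property via \cite{AST} and of the multiplication rule by expanding in the basis of $\Hom_G(\Delta(\lambda),R)$. The coherence issue you flag at the end is not actually an obstacle: the structure constants $r_\varphi$ are defined at the level of $\Hom_G(\Delta(\lambda),-)$, where no lifting choices intervene, and any discrepancy between lifts vanishes on $\Delta(\lambda)\subset T(\lambda)$, hence factors through $T(\lambda)/\Delta(\lambda)$ (all of whose weights are $<\lambda$) and so lies in the lower ideal $\T^{<\lambda}(G)$ --- no universal tilting module is needed.
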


In the proof of this proposition we shall need the following families of morphisms in $\C(G)$. Let $\lambda \in X^+$ and suppose  $\varphi: M \to N$ is a morphism in $\C(G)$. Then $\varphi$ is in particular a $T$-homomorphism, i.e. it maps $M_\mu$ into $N_\mu$ for all $\mu \in X$. Let us denote by $\varphi_\mu: M_\mu \to N_\mu$ the restriction of $\varphi$ to $M_\mu$. We then say that $\varphi \in \C^{< \lambda}(G)$ if $\varphi_\mu = 0$ unless $\mu < \lambda$. 

We define $\C^{\leq \lambda}(G)$ similarly. Of course, we also have the analogous families $\T^{< \lambda}(G)$ and  $\T^{\leq \lambda}(G)$  of morphisms in the category $\T(G)$.

\begin{proof} In Subsection \ref{tiltcat} we equipped  $\T(G)$  with an anti-involution $\dagger$. 

We shall now define what will be our cell datum in $\T(G)$. It consists firstly of the poset $\Lambda =(X^+, \leq)$, see Section 2.1. Secondly, we need for each $Q \in \T(G)$ and each $\lambda \in X^+$ to give a finite set $K(Q,\lambda)$. We set
$$ K(Q,\lambda) = \{1,2, \cdots ,(Q:\Delta(\lambda))\}.$$
Recall that $(Q:\Delta(\lambda)) = \dim_k \Hom_G(\Delta(\lambda), Q)$. We choose a basis $\{g_1^\lambda(Q), g_2^\lambda(Q), \cdots , g_r^\lambda(Q) \}$ for $\Hom_G(\Delta(\lambda), Q)$ (so $r = (Q:\Delta(\lambda))$), and for each $i$ we lift $g_i^\lambda(Q)$ to $\bar g_i^\lambda(Q) \in \Hom_G(T(\lambda), Q)$. This is possible, since by (\ref{ext-vanishing}) we have $\Ext^1_G(T(\lambda)/\Delta(\lambda), Q) = 0$. Then we set $f_i^\lambda(Q) =  g_i^\lambda(Q)^\dagger \in \Hom_G(Q, \nabla(\lambda))$ and $\bar f_i^\lambda(Q) =  \bar g_i^\lambda(Q)^\dagger$.

These choices allow us now to give the third ingredient of our datum for $\T(G)$, namely for each pair $P,Q \in \T(G)$ and each $\lambda$ we define the injection
$$ C^\lambda(P,Q): K(P,\lambda) \times K(Q,\lambda) \rightarrow \Hom_G(P,Q)$$
by the recipe $C^\lambda(P,Q)(i,j) = c_{ij}^\lambda(P,Q) := \bar g_i^\lambda(Q) \circ \bar f_j^\lambda(P)$.

We claim: The datum $(\Lambda, \{K(Q,\lambda) | Q \in \T(G), \lambda \in \Lambda\}, \{C^\lambda(P,Q) | P,Q \in \T(G), \lambda \in \Lambda \})$ satisfies the following three properties (named C-I, C-II and C-III in \cite{W}, Definition 2.1):

\begin{enumerate}
\item  The images of the maps  $C^\lambda(P,Q), \lambda \in \Lambda$ give a basis for $\Hom_G(P,Q)$ for all $P,Q \in \T(G)$. \\
This is the content of \cite{AST}, Theorem 3.1.
\item
 $ c_{ij}^\lambda(P,Q)^\dagger = c_{ji}^\lambda(Q,P)  $ for all $\lambda, i, j, P, Q$.\\
This is clear from the definitions together with the fact that $\dagger$ is an anti-involution.
\item Let again $P, Q, R \in \T(G), \lambda \in \Lambda, i \in K(P,\lambda), j \in K(Q,\lambda)$. Take $\varphi \in \Hom_G(Q, R)$. Then
$$ \varphi \circ c_{ij}^\lambda(P,Q) = \sum_{m \in K(R, \lambda)} r_\varphi(i,m) c_{m,j}^\lambda(P,R) \; {(\mo \; \T^{<\lambda}(G))}$$
for some scalars $r_\varphi(i,m) \in k$ which are independent of $j$.

To see this we express for each $i \in K(Q, \lambda)$ the composite $\varphi \circ g_i^\lambda(Q)$ in the basis $(g_m^\lambda (R))_{m\in K(R,\lambda)}$ for $\Hom_G(\Delta(\lambda), R)$, i.e. we find $r_\varphi(i,m) \in k$ such that 
$\varphi \circ g_i^\lambda(Q) = \sum_m r_\varphi(i,m) g_m^\lambda(R)$. Then we have $\varphi \circ \bar g_i^\lambda(Q) -  \sum_m r_\varphi(i,m) \bar g_m^\lambda(R) \in \T^{<\lambda}(G)$. This proves the desired equality.
\end{enumerate}
\end{proof}

\begin{rem}
\begin{enumerate}
\item The proof of this proposition works just as well in the quantum case considered in \cite{AST}. Hence the category  of tilting modules for the quantum group $U_q$, $q$ a root of unity in any field of characteristic different from $2$, is also a cellular category. In particular, the endomorphism algebras of tilting modules are cellular algebras as proved in \cite{AST}, Theorem 3.9. 
\item Clearly, the argument used in Remark \ref{sym} does not work in characteristic $2$ and at the moment we have no replacement. Therefore Proposition \ref{cellularity for G}, Theorem \ref{SOACC for G} below and the consequences we derive of these results  all need the assumption $p \neq 2$. The same goes for \cite{AST}, Theorem 3.9.
\end{enumerate} 
\end{rem}

If we are a little more careful with the choices made in the proof of Proposition \ref{cellularity for G} we can strengthen the result to get that $\T(G)$ is in fact a ´´strictly object-adapted cellular category" (or an SOACC for short). We refer to \cite{EL}, Definition 2.4 for the definition of SOACC. This result should also be compared to \cite{BS}, Definition 5.1 and Remark 5.5, where instead of SOACC the terminology 'module category for an upper finite based quasi-hereditary algebra (with duality)' is used.
\begin{thm} \label{SOACC for G}
Suppose $p > 2$. Then
$\T(G)$ is a strictly object-adapted cellular category.
\end{thm}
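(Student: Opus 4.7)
The plan is to revisit the proof of Proposition~\ref{cellularity for G} and refine the choices of bases so as to meet the additional requirements of \cite{EL}, Definition 2.4. Recall that a strictly object-adapted cellular category singles out, for each $\lambda$ in the indexing poset $\Lambda$, a distinguished \emph{cell object} $E_\lambda$ through which the cellular basis factors in a strict way. The natural candidates here are $E_\lambda = T(\lambda)$ for $\lambda \in X^+$, and the poset $\Lambda = (X^+,\leq)$ satisfies the required dcc because for any $\lambda \in X^+$ the set $\{\mu \in X^+ \mid \mu \leq \lambda\}$ is finite. The anti-involution $\dagger$ constructed in Subsection~\ref{tiltcat} using the symmetric isomorphisms $\phi_\lambda$ will serve as the required duality, so I would begin by noting that all of this structure is already in hand from Proposition~\ref{dagger}.

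The main refinement is in the choice of the morphisms $\bar g_i^\lambda(Q)$ and $\bar f_j^\lambda(Q)$. The crucial observation is that for $Q = T(\lambda)$ one has $(T(\lambda):\Delta(\lambda)) = 1$, so $\Hom_G(\Delta(\lambda), T(\lambda))$ is one-dimensional and $K(T(\lambda),\lambda)$ is a singleton. Choosing $g_1^\lambda(T(\lambda))$ to be the canonical inclusion of the top $\Delta(\lambda)$-layer in a $\Delta$-filtration of $T(\lambda)$, I would lift it to $\bar g_1^\lambda(T(\lambda)) = \mathrm{id}_{T(\lambda)}$. Because $\dagger$ is the identity on objects, sends $\mathrm{id}_{T(\lambda)}$ to $\mathrm{id}_{T(\lambda)}$ (here is where the symmetry of $\phi_\lambda$ is essential), and is an anti-involution, this automatically gives $\bar f_1^\lambda(T(\lambda)) = \mathrm{id}_{T(\lambda)}$. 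For all remaining pairs $(Q,\lambda)$ with $Q \not\cong T(\lambda)$ I would keep the freedom of choice from the proof of Proposition~\ref{cellularity for G} and invoke $\dagger$ to fix $\bar f_j^\lambda(Q) := \bar g_j^\lambda(Q)^\dagger$ so that the up and down data are strictly interchanged by the duality.

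With these refined choices in place, the verification of the SOACC axioms amounts to checking that the cellular datum $\{c_{ij}^\lambda(P,Q)\}$ factors through the cell objects in the strict sense and that composition with the identity on a cell object produces the expected basis elements. Concretely, for $P = T(\lambda)$ and any $Q \in \T(G)$ the composite
$$ c_{i1}^\lambda(T(\lambda),Q) \;=\; \bar g_i^\lambda(Q) \circ \bar f_1^\lambda(T(\lambda)) \;=\; \bar g_i^\lambda(Q) $$
is literally the chosen up-morphism, and symmetrically for $Q = T(\lambda)$. In particular the diagonal basis element $c_{11}^\lambda(T(\lambda),T(\lambda))$ equals $\mathrm{id}_{T(\lambda)}$. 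Properties (C-I)--(C-III) established in the proof of Proposition~\ref{cellularity for G} then yield the strict object-adapted axioms directly, with the $r_\varphi(i,m)$ scalars playing the role of the structure constants required in \cite{EL}, Definition 2.4.

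The main obstacle is the simultaneous compatibility of all the refined choices with the anti-involution $\dagger$: if we are careless, enforcing $\bar g_1^\lambda(T(\lambda)) = \mathrm{id}$ might conflict with $\bar f_1^\lambda(T(\lambda)) = \mathrm{id}$ unless $\dagger(\mathrm{id}_{T(\lambda)}) = \mathrm{id}_{T(\lambda)}$, which in turn demands that $\phi_\lambda$ be symmetric. This is exactly why the hypothesis $p > 2$ is needed (cf.\ Remark~\ref{sym}); in characteristic $2$ there is no obvious way to arrange the strictness condition with a single symmetric isomorphism $T(\lambda) \to {}^\tau T(\lambda)$.
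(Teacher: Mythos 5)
Your proposal is correct and follows essentially the same route as the paper: both reduce the SOACC axioms to the observation that $\Hom_G(\Delta(\lambda),T(\lambda))$ is one-dimensional, spanned by an inclusion, whose lift can be taken to be $\mathrm{id}_{T(\lambda)}$, with the remaining data inherited from Proposition~\ref{cellularity for G}. (One small inaccuracy: $\dagger(\mathrm{id}_{T(\lambda)})=\mathrm{id}_{T(\lambda)}$ holds for any choice of $\phi_\lambda$ directly from the formula $f^\dagger=\phi_P^{-1}\circ f^*\circ\phi_Q$; the symmetry of $\phi_\lambda$, hence $p>2$, is what makes $\dagger$ an involution, which is already required for the cellular structure itself.)
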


\begin{proof} We consider the poset $(X^+, \leq)$ as a subset of the objects in $\T(G)$ by identifying $\lambda \in X^+$ with $T(\lambda)$. So now $T(\lambda) \leq T(\mu)$ iff $\lambda \leq \mu$. Going through the points in \cite{EL}, Definition 2.4 we let  $\lambda \in X^+$ and $P \in \T$. Then we identify our set $K(P, \lambda)$ with the set $M(T(\lambda), X)$, respectively  $E(X, T(\lambda))$, occurring in that definition by associating to each $i \in K(P,\lambda)$ the lift $\bar g_i^\lambda(P) \in \Hom_G(T(\lambda), P)$, respectively its dual $\bar f_i^\lambda(P)\in \Hom_G(P, T(\lambda))$. The only thing we then need to do to make sure that our cellular datum in Theorem \ref{cellularity for G} satisfies the conditions in this definition is to observe that, since $\lambda$ is the highest weight in $T(\lambda)$ and occurs with multiplicity $1$, see Subsection 2.3, any basis vector $g^\lambda(T(\lambda)) \in \Hom_G(\Delta(\lambda), T(\lambda)) \simeq k$ is an inclusion. We can therefore choose as  lift of $g^\lambda(T(\lambda))$  the identity $\bar g^\lambda(T(\lambda)) = id \in \End_G(T(\lambda))$ for any $ \lambda \in X^+$. 
\end{proof} 

\begin{rem} \label{choice}
It will turn out that it is convenient for the results in the following to always work with the identity on $T(\lambda)$ as our lift of the inclusion of $\Delta(\lambda) \hookrightarrow T(\lambda)$ as in the above proof. So \it {in the rest of this paper we will fix this choice}. We will also \it{assume $p \neq 2$ from now on}.
\end{rem}

\subsection{Generators for tilting homomorphisms}

Set now 
$$ B(G) = \{\bar g_i^\lambda(T(\mu)) | \lambda, \mu \in X^+, \lambda \leq \mu, i = 1, 2, \cdots ,(T(\mu):\Delta(\lambda)) \}.$$
By our choice in Remark \ref{choice} we see that $\bar g_i^\lambda(T(\mu)) = c_{i1}^\lambda(T(\lambda), T(\mu))$ (note that $K(T(\lambda), \lambda) = \{1\}$ so that the only possible second index here is $1$), i.e. the elements of $B(G)$ are certain special elements of our cellular bases. 

With this notation we have.

\begin{thm} \label{generators for G} The tuple $(\{T(\lambda)\mid \lambda \in X^+\}, B(G))$ generates $\T(G)$ as an additive, $k$-linear category with duality. On the level of morphisms this means that every morphism in $\T(G)$ is obtained from $B(G)$ by  taking direct sums, duals and compositions.
\end{thm}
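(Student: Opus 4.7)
The plan is to show that every morphism $\varphi \in \Hom_G(P,Q)$ for $P,Q \in \T(G)$ can be built from $B(G)$ using the three allowed operations (direct sums, duals, compositions). By additivity, since every tilting module decomposes as a finite direct sum $P = \bigoplus_\mu T(\mu)^{\oplus (P:T(\mu))}$ and similarly for $Q$, the morphism $\varphi$ is a matrix of morphisms between the indecomposable summands. It therefore suffices to realize every element of $\Hom_G(T(\mu), T(\nu))$ for arbitrary $\mu, \nu \in X^+$ from the set $B(G)$ using duals and compositions, and then close under $k$-linear combinations.

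Next I would invoke the cellular basis from the proof of Proposition \ref{cellularity for G}: the elements
\[
c_{ij}^\lambda(T(\mu), T(\nu)) \;=\; \bar g_i^\lambda(T(\nu)) \circ \bar f_j^\lambda(T(\mu)), \qquad \lambda \in X^+,\ i \in K(T(\nu),\lambda),\ j \in K(T(\mu),\lambda),
\]
form a $k$-basis of $\Hom_G(T(\mu), T(\nu))$. Because $\lambda$ must be a weight of both $T(\mu)$ and $T(\nu)$, the relevant $\lambda$ automatically satisfy $\lambda \leq \mu$ and $\lambda \leq \nu$. Thus it is enough to exhibit each such $c_{ij}^\lambda$ as a composition of duals of elements of $B(G)$ with elements of $B(G)$.

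The key observation is that each factor already has this form by the very definition of $B(G)$. On the one hand, $\bar g_i^\lambda(T(\nu)) \in B(G)$ directly, since $\lambda \leq \nu$. On the other hand, $\bar f_j^\lambda(T(\mu)) = \bar g_j^\lambda(T(\mu))^\dagger$, and $\bar g_j^\lambda(T(\mu)) \in B(G)$ because $\lambda \leq \mu$. Hence
\[
c_{ij}^\lambda(T(\mu), T(\nu)) \;=\; \bar g_i^\lambda(T(\nu)) \;\circ\; \bigl(\bar g_j^\lambda(T(\mu))\bigr)^{\dagger}
\]
is obtained from two elements of $B(G)$ by taking one dual and composing. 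Together with $k$-linear combinations and the reduction to indecomposables via direct sums at the start, this exhibits every morphism in $\T(G)$ in the required form.

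The only step that requires any genuine input (rather than bookkeeping) is the very first one, namely that the $c_{ij}^\lambda$ span all morphisms; but that is exactly the cellular basis statement established in the proof of Proposition \ref{cellularity for G} (quoting \cite{AST}, Theorem 3.1). Everything else is formal: the convention fixed in Remark \ref{choice} ensures that when $P = T(\lambda)$ the relevant lift is the identity, so no hidden auxiliary generators are needed, and the compatibility $(g \circ f)^\dagger = f^\dagger \circ g^\dagger$ guarantees that the class of morphisms generated by $B(G)$ under direct sums, duals and compositions is closed under all the operations we use. I expect no substantive obstacle beyond correctly identifying the factorization of the cellular basis elements displayed above.
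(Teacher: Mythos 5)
Your proposal is correct and follows essentially the same route as the paper's proof: reduce to indecomposable summands, invoke the cellular basis of $\Hom_G(T(\mu),T(\nu))$ from Proposition \ref{cellularity for G}, and observe that each $c_{ij}^\lambda$ factors as an element of $B(G)$ composed with the dual of an element of $B(G)$. The only addition you make beyond the paper's wording is the (correct and worthwhile) remark that nonemptiness of $K(T(\nu),\lambda)$ forces $\lambda\leq\nu$, which guarantees membership in $B(G)$.
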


\begin{proof} As we recalled in Subsection 2.3 the modules in $\T(G)$ split into direct sums of $T(\lambda)$'s. Hence to prove the theorem we have only left to check that the cellular basis elements $c^\lambda_{ij}(T(\nu), T(\mu))$ from Proposition \ref{cellularity for G} all belong to the set of morphisms generated by $B(G)$ for all $\lambda, \nu, \mu \in X^+$. Furthermore, since (in our notation in Section \ref{cellular categories}) $c^\lambda_{ij}(T(\nu), T(\mu)) = \bar g_i^\lambda(T(\mu)) \circ  \bar f_j^\lambda (T(\nu))$ and $\bar f_j^\lambda(T(\nu))$ by definition is the dual of $\bar g_j^\lambda(T(\nu))$  we see that elements in $\Hom_G(T(\mu), T(\nu))$ are indeed linear combinations of composites of elements from $B(G)$ with duals of such.
\end{proof}

\section{Category $\O$}

 This section deals with the BGG category $\O$ for a semisimple Lie algebra $\mathfrak g$ over $\Co$. We refer to \cite{Hu} for a thorough treatment of this category. Actually, we shall only consider a small subcategory inside $\O$, namely the principal block, see \cite{Hu}, 1.13. We denote this block $\O(0)$.  It turns out that the arguments used in Section \ref{G} applies just as well to the tilting modules in $\O(0)$. Let us, however, point out right away that neither $\O$ nor $\O(0)$ are monoidal categories (if we 
 a module in $\O$ by a finite dimensional $\mathfrak g$-module we stay inside $\O$, but this is not the case if we tensor two arbitrary modules in $\O$. Moreover, the block $\O(0)$ is not stable under tensoring by finite dimensional modules). Nevertheless, the arguments from Section 2 still leads to a set of generators for the morphisms in the subcategory consisting of tilting modules in $\O(0)$. We first establish this in general, then demonstrates how we can refine this set of generators in the multiplicity free case. Finally we show that for $\mathfrak g = \mathfrak {sl}_3$, we can write down the relations among the (refined set of)  generators.

\subsection{The subcategory of tilting modules in $\O(0)$}

First we fix notation: We let $\mathfrak h$ be a Cartan subalgebra in the semisimple comlex Lie algebra $\mathfrak g$. Denote by $\Delta(\lambda)$ the Verma module (with respect to a Borel subalgebra $\mathfrak b$ containing $\mathfrak h$) with highest weight $\lambda \in \mathfrak h^*$, and by $T(\lambda)$ the corresponding indecomposable tilting module in $\O$, cf. \cite{Hu}, 11.2. We let  $\T(0)$ be the subcategory of tilting modules in $\O(0)$,  i.e. $\T(0)$ is the additive category in which the indecomposable objects are $\{T(w \cdot 0) | w \in W\}$. Here $W$ is the Weyl group for $\mathfrak g$. We shall denote the Bruhat order on $W$ (\cite{Hu}, 5.2) by $\leq$. Then for $y, w \in W$ we have $y \leq w$ if and only if $y\cdot 0 \geq w \cdot 0$, where the latter ordering is induced by the set of positive roots in the root system for $(\mathfrak g, \mathfrak h)$ determined by $\mathfrak b$.

Using notation similar to Section \ref{G} and writing $x$ instead of $x \cdot 0$ we define 
$$ B(0) = \{ \bar g_i^x(T(y)) | x, y \in W,  x \geq y, \; i=1, 2, \cdots , (T(y):\Delta(x))\}.$$

Then we get exactly as in Theorem \ref{generators for G}:
\begin{thm} Every morphism in the additive category $\T(0)$ is obtained from  $B(0)$ by taking direct sums, duals and compositions.
\end{thm}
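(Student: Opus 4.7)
The plan is to mirror the proof of Theorem \ref{generators for G} in the category $\O(0)$ setting, with the Bruhat-ordered Weyl group playing the role of $X^+$. The key point is that, although $\O$ and $\O(0)$ are not monoidal, everything we actually used in Section \ref{G} was purely representation-theoretic and available inside $\O(0)$: the classification of indecomposable tiltings $T(w\cdot 0)$, the standard/costandard objects $\Delta(w\cdot 0)$ and $\nabla(w\cdot 0)$, the Ext-vanishing $\Ext^1_\O(\Delta(x),\nabla(y)) = 0$ unless $x = y$ (in which case only $\Hom$ is $1$-dimensional), and the contravariant duality on $\O$ that fixes simples. So the first step is to set up the cellular datum for $\T(0)$ as we did for $\T(G)$: take the poset $\Lambda = (W, \leq)$ (with $y \leq w$ in Bruhat order corresponding to $y\cdot 0 \geq w\cdot 0$), take $K(Q,x) = \{1,\dots,(Q:\Delta(x))\}$ with $(Q:\Delta(x)) = \dim_\Co \Hom_\O(\Delta(x), Q)$, and for each $x$ choose a basis $g_1^x(Q),\dots,g_r^x(Q)$ of $\Hom_\O(\Delta(x), Q)$ lifted via Ext-vanishing to $\bar g_i^x(Q) \in \Hom_\O(T(x), Q)$.

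Next, I would fit in the involution $\dagger$: since $\mathrm{char}(\Co) = 0$, the symmetrisation trick of Remark \ref{sym} applies without any obstruction, so fixing symmetric isomorphisms $\phi_x : T(x) \to {}^\tau T(x)$ gives a contravariant involution $\dagger$ on $\T(0)$ exactly as in Subsection \ref{tiltcat}, and we set $\bar f_j^x(Q) = \bar g_j^x(Q)^{\dagger}$. Define $c_{ij}^x(P,Q) = \bar g_i^x(Q)\circ \bar f_j^x(P)$. Then the three axioms C-I, C-II, C-III from Proposition \ref{cellularity for G} are verified verbatim: C-I (basis property) follows from the $\O$-analogue of \cite{AST}, Theorem 3.1, which itself is a standard consequence of the fact that every tilting in $\O(0)$ has a $\Delta$- and $\nabla$-filtration together with Ext-vanishing; C-II is formal from $\dagger$; and C-III reduces to expanding $\varphi \circ g_i^x(Q)$ in the basis of $\Hom_\O(\Delta(x), R)$ and noting that the lift differs from this by a morphism in $\T^{<x}(0)$.

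After this, promote $\T(0)$ to an SOACC exactly as in Theorem \ref{SOACC for G}: identify $w \in W$ with $T(w\cdot 0) \in \T(0)$, and, since the $\lambda$-weight space of $T(\lambda)$ is one-dimensional with $\lambda$ the highest weight of $\Delta(\lambda) \hookrightarrow T(\lambda)$, one may normalize so that $\bar g^x(T(x)) = \mathrm{id}_{T(x)}$ for every $x \in W$. With that normalization, $\bar g_i^x(T(y)) = c_{i,1}^x(T(x), T(y))$, so the elements of $B(0)$ are indeed distinguished cellular basis elements.

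Finally, the argument of Theorem \ref{generators for G} transfers without change: any object of $\T(0)$ is a finite direct sum of $T(w\cdot 0)$'s, and for any $x,y,z \in W$ and any cellular basis element
\[
c_{ij}^x(T(z), T(y)) = \bar g_i^x(T(y)) \circ \bar f_j^x(T(z)) = \bar g_i^x(T(y))\circ \bar g_j^x(T(z))^\dagger,
\]
both factors lie in $B(0) \cup B(0)^\dagger$, so the whole composition is obtained from $B(0)$ via direct sums, duals, and compositions. The only step I expect to require care (rather than being routine) is the $\O(0)$-analogue of \cite{AST}, Theorem 3.1 used in C-I, where one must be a little attentive about the category being only upper finite (so the cellular basis is in each $\Hom$-space finite, even though the poset is not), but this is exactly the setting covered by \cite{BS}, Definition 5.1.
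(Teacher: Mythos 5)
Your proposal is correct and follows essentially the same route as the paper, which simply transfers the cellular-datum construction and the generation argument of Theorem \ref{generators for G} verbatim to $\T(0)$, using Ext-vanishing between Vermas and dual Vermas, the duality on $\O$, and the normalization $\bar g^x(T(x)) = \mathrm{id}$. One small point: since $\mathfrak g$ is semisimple the Weyl group $W$ is finite, so the poset here is actually finite (not merely upper finite), which makes the step you flagged as delicate even more routine than you anticipated.
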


\subsection{The multiplicity free case}
We  say that $\T(0)$ is multiplicity free if $(T(y):\Delta(x)) \leq 1$ for all $x, y \in W$. In this case  we can refine the set of generators as follows. We denote by $\bar g^x(y)$ a lift of the unique (up to scalar) element of $\Hom_\O(\Delta(x), T(y))$ and set 
$$B'(0) = \{\bar g^x(y) | y \leq x, \; y \text { and $x$ are neighbors in the Bruhat graph for } W\}.$$ 
Then we have. 
\begin{thm} \label{generators in O} Suppose $\T(0)$ is multiplicity free. Then every morphism in $\T(0)$ is obtained from the set $B'(0)$ by taking direct sums, duals, and compositions.
\end{thm}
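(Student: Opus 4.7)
By the preceding theorem of Section~3 it suffices to show that every basis element $\bar g^x(y) \in B(0)$ with $y \leq x$ in Bruhat order can be obtained from $B'(0)$ by sums, duals, and compositions.

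The plan is to proceed by a single downward induction on $\ell(x)$, which is well-founded because $W$ is finite. The base case $x = w_0$ is special: there are no Weyl group elements strictly above $w_0$ in Bruhat, so in the multiplicity-free case the cellular basis of $\Hom_\O(T(w_0), T(y))$ reduces to the single element $\bar g^{w_0}(y)$ and no correction terms will arise.

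For the inductive step, fix $x$ with $\ell(x) < \ell(w_0)$ and assume the claim for every $x'$ with $\ell(x') > \ell(x)$. For $y \leq x$ with $d := \ell(x) - \ell(y)$, choose a saturated chain of Bruhat covers $y = z_0 \lessdot z_1 \lessdot \cdots \lessdot z_d = x$ (which exists by the chain property of Bruhat order) and form the composition
\[
\Phi \;:=\; \bar g^{z_1}(z_0) \circ \bar g^{z_2}(z_1) \circ \cdots \circ \bar g^{z_d}(z_{d-1}) \;\in\; \Hom_\O(T(x), T(y)).
\]
Each factor lies in $B'(0)$, so $\Phi$ itself is generated by $B'(0)$. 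Iterating property (C-III) of the cellular structure on $\T(0)$ (proved by the exact analogue of Proposition~\ref{cellularity for G}) together with the convention $\bar g^x(T(x)) = \mathrm{id}$ of Remark~\ref{choice}, one obtains a cellular expansion
\[
\Phi \;=\; r \cdot \bar g^x(y) \;+\; \sum_{\mu > x \text{ in Bruhat}} c_\mu \cdot \bar g^\mu(y) \circ \bar g^\mu(x)^\dagger,
\]
where $r \in \Co$ is characterized by $\bar g^{z_1}(z_0) \circ \cdots \circ \bar g^{z_{d-1}}(z_{d-2}) \circ g^x(T(z_{d-1})) = r \cdot g^x(T(y))$ inside the $1$-dimensional space $\Hom_\O(\Delta(x), T(y))$ (multiplicity-freeness is essential here). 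For each $\mu > x$ one has $\ell(\mu) > \ell(x)$, and the inductive hypothesis gives that $\bar g^\mu(y)$ and $\bar g^\mu(x)$ are already generated by $B'(0)$; closure under duals and compositions then makes the correction term $\bar g^\mu(y) \circ \bar g^\mu(x)^\dagger$ generated as well. Assuming $r \neq 0$, I solve
\[
\bar g^x(y) \;=\; \tfrac{1}{r}\Bigl(\Phi \;-\; \sum_{\mu > x} c_\mu \cdot \bar g^\mu(y) \circ \bar g^\mu(x)^\dagger\Bigr),
\]
and the induction closes.

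\emph{The main obstacle} is establishing $r \neq 0$ for some suitably chosen chain. In the multiplicity-free setting this reduces to proving non-vanishing of the composition
\[
\Delta(x) \;\hookrightarrow\; T(z_{d-1}) \;\xrightarrow{\bar g^{z_{d-1}}(z_{d-2})}\; T(z_{d-2}) \;\to\; \cdots \;\to\; T(z_0) = T(y)
\]
in $\Hom_\O(\Delta(x), T(y)) \cong \Co$. I would verify this by tracking the highest-weight vector of $\Delta(x)$ through the $\Delta$-filtrations of the intermediate $T(z_i)$, exploiting multiplicity-freeness to pin down the sub-Verma $\Delta(x) \subset T(z_i)$ uniquely up to scalar and to see that each neighbor morphism $\bar g^{z_i}(z_{i-1})$ (being a lift of a non-zero element of the $1$-dimensional space $\Hom_\O(\Delta(z_i), T(z_{i-1}))$) does not annihilate it. In the concrete example $\mathfrak{g} = \mathfrak{sl}_3$ treated right after this theorem the non-vanishing is verified directly; a uniform argument is most naturally obtained via the description of $\T(0)$ in terms of translation functors.
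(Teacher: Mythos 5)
Your reduction of the problem is sound: expressing the composite $\Phi$ of neighbor morphisms in the cellular basis, observing that the correction terms are indexed by $\mu>x$ in Bruhat order and hence handled by downward induction on length, and isolating the coefficient $r$ of $\bar g^x(y)$ as the scalar with $\Phi|_{\Delta(x)} = r\cdot g^x(y)$ in the one-dimensional space $\Hom_\O(\Delta(x),T(y))$ --- all of this is correct. But the argument does not close, because the step you yourself flag as ``the main obstacle,'' namely $r\neq 0$, is only sketched (``I would verify this by tracking the highest-weight vector\dots''), and without it the identity $\bar g^x(y)=\tfrac{1}{r}(\Phi-\sum_\mu\cdots)$ is vacuous; note that this non-vanishing is needed even in your base case $x=w_0$, where the absence of correction terms still leaves open the possibility $\Phi=0$.

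The paper closes exactly this gap by citing the classical theory of Verma module homomorphisms (\cite{Hu}, 4.2): for $x\geq y$ the space $\Hom_\O(\Delta(x),\Delta(y))$ is one-dimensional, every non-zero element is \emph{injective}, and the generator factors as a composite of the corresponding maps along any Bruhat chain. Multiplicity-freeness gives $\Hom_\O(\Delta(z_{i+1}),\Delta(z_i))\xrightarrow{\sim}\Hom_\O(\Delta(z_{i+1}),T(z_i))$ via the inclusion $\Delta(z_i)\subset T(z_i)$, so each neighbor morphism restricted to the top sub-Verma stays inside the next sub-Verma and is injective there; the composite $\Delta(x)\hookrightarrow T(z_{d-1})\to\cdots\to T(y)$ is then a composite of injections, hence non-zero, giving $r\neq 0$. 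Once you have this fact, your induction and the correction terms become unnecessary: since the lift $\bar g^x(y)$ is only pinned down up to the choice of a non-zero element of $\Hom_\O(\Delta(x),T(y))$, one may simply \emph{choose} $\bar g^x(y)$ to be the composite $\Phi$ itself, which is how the paper proceeds in two lines. So supply the reference to the injectivity of Verma homomorphisms and your argument becomes correct, though still more roundabout than needed.
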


\begin{proof}
It is well-known that whenever $x \geq y$ we have a unique (up to scalar) homomorphism in $\Hom_{\O}(\Delta(x), \Delta(y))$, that this is a composite of homomorphisms between Verma modules corresponding to any sequence $x= x_0 < x_1 < \cdots  < x_r = y$ in the Bruhat graph, and that these homomorphisms are all injections, see \cite{Hu}, 4.2 It follows that we we may take $\bar g^x(y)  = \bar g^{x_{r-1}}(x_r) \circ  \bar g^{x_{r-2}}(x_{r-1}) \circ \cdots \circ  \bar g^{x_0}(x_1) $. Hence we can refine the set $B(0)$ from Theorem \ref{generators in O} to the smaller set $B'(0)$.
\end{proof}

\subsection{$\mathfrak g = \mathfrak {sl}_3$}
In this subsection we consider $\mathfrak g = \mathfrak {sl}_3$. It is easy to check that in this case $\T(0)$ is multiplicity free, so that Theorem \ref{generators in O} applies. We shall use this result together with some brute force calculations to obtain a presentation of the tilting category in terms of a quiver algebra with relations. Our calculations will also demonstrate that this approach will not generalize to higher rank.

Let $\mathfrak g =\mathfrak sl_3$. Denote by $s$ and $t$ the simple reflections in $W$ so that $W = \{1, s, t, st, ts, w_0\}$ where $w_0 = sts = tst$. If $ w \in W$ we denote by $e_w \in \End_\O(T(w))$ the identity on $T(w)$. Moreover, we set (using the notation from Theorem \ref{generators in O})
$$ u_1 = \bar g^{w_0}(st),  u_2 = \bar g^{w_0}(ts),  u_3 = \bar g^{st}(s), u_4 =  \bar g^{st}(t),$$
$$  u_5 = \bar g^{ts}(s), u_6 = \bar g^{ts}(t),  u_7 = \bar g^{s}(1), u_8 = \bar g^{t}(1).$$
Then $\{u_1, u_2, \cdots , u_8\}$ is the set $B'(0)$ defined in Subsection 3.2 for $\mathfrak {sl}_3$. 

We can picture these elements as arrows in the Bruhat graph for the Weyl group for type $A_2$ as follows:

\[
\begin{tikzcd}[row sep=3.0em]
 & 1  \\
s \arrow{ur}{u_7} && t \arrow[swap]{ul}{u_8} \\ 
st \arrow{u}{u_3} \arrow[near start]{urr}{u_4 } && ts \arrow[swap, near start]{ull}{u_5} \arrow[swap]{u}{u_6} \\
& w_0 \arrow{ul}{u_1} \arrow[swap]{ur}{u_2} 
\end{tikzcd}
\]
\vskip .5cm
We let $d_i$ be the dual of $u_i$ for all $i$. In the above figure $d_i$ is the reverse arrow of $u_i$.
With this notation we have.

\begin{thm} \label{generators/relations}
The morphisms in the category $\T(0)$ for $\mathfrak sl_3$ are generated by the set
$$\{ e_w| w \in W\} \cup \{u_i | i=1, 2, \cdots ,8\} \cup  \{d_i | i=1, 2, \cdots ,8\}$$
subject to the following relations
\begin{enumerate}
\item $  u_3u_1 = u_4 u_2, \; u_4 u_1 = u_6u_2,\;  u_7u_3 = u_8u_4, \; u_8u_6 = u_7u_5,$
\item $ d_1d_3 = d_2 d_4, \; d_4 d_1 = d_2d_6,\;  d_3d_7 = d_4d_8, \; d_6d_8 = d_5d_7,$ 
\item $d_1u_1 =  d_2u_2 = d_4u_4 =  d_5u_5 = 0$,
\item $ d_3u_3 = a u_1d_1, \;  d_6u_6 = a u_2 d_2, \; d_6 u_4 = - a u_2 d_1, \; d_3u_5= - a u_1d_2, \; d_4 u_6 = - a u_1 d_2,\; d_5u_3= - a u_2d_1$ for some $a \in \Co^\times$,
\item$ d_7u_7 = b u_5d_5, \; d_8 u_8 = b u_4d_4, d_8 u_7 = b u_4d_3 + b u_6d_5 + r u_6u_2d_1d_3, \; d_7 u_8 = b u_3d_4 + b u_5d_6 + r u_3u_1d_2d_6$ for some $ b \in \Co^\times$,  $ r \in \Co$.

\end{enumerate}
\end{thm}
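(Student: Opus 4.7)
The proof has three tasks: generation, verification of the listed relations, and completeness of the presentation.

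Generation is immediate from Theorem~\ref{generators in O}: for $\mathfrak{g} = \mathfrak{sl}_3$ the principal block $\T(0)$ is multiplicity free, so $B'(0) = \{u_1,\dots,u_8\}$ together with the identities $e_w$ and duals $d_i$ generates all morphisms. For the verification I would group the relations by type. Relations (1) assert that parallel paths through the Bruhat graph give equal morphisms; they follow from the one-dimensionality of $\Hom_\O(\Delta(x), \Delta(y))$ for $x \geq y$ together with the inductive recipe for the $u_i$'s supplied by the proof of Theorem~\ref{generators in O}, under which both composites lift the same Verma map, so a coherent rescaling of the $u_i$'s eliminates any residual scalar. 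Relations (2) follow by applying the anti-involution $\dagger$ to (1). Relations (3) assert the vanishing of composites such as $d_1 u_1 : T(w_0 \cdot 0) \to T(w_0 \cdot 0)$; here $u_1$ lands inside the socle $L(w_0 \cdot 0) \subset T(st \cdot 0)$, which lies in the radical of $T(st \cdot 0)$ and is therefore killed by the head projection $d_1$, and similarly for the other entries.

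Relations (4) and (5) are the substantive computations. I would exploit the cellular bases of the relevant Hom-spaces given by Proposition~\ref{cellularity for G}. For instance, $\End_\O(T(st \cdot 0))$ is local with two-dimensional cellular basis $\{\mathrm{id}, u_1 d_1\}$; the composite $d_3 u_3$ is not invertible in this algebra (since $T(s \cdot 0)$ is indecomposable and not isomorphic to $T(st \cdot 0)$, so $u_3$ admits no splitting), hence lies in the Jacobson radical and equals $a \cdot u_1 d_1$ for some scalar. The constant $a$ is then determined by evaluating both sides on the $\Delta(w_0 \cdot 0)$-submodule, and the remaining entries of (4) follow by the $s \leftrightarrow t$ symmetry of the configuration, with signs accounting for the relative orientations of the two Verma embedding paths. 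Relations (5) are structurally similar but involve the three-dimensional cellular basis of $\Hom_\O(T(s \cdot 0), T(t \cdot 0))$ and its mirror, yielding three-term decompositions with scalars $b$ and $r$; in particular the correction term $r \, u_6 u_2 d_1 d_3$ arises from the cellular basis element of label $w_0 \cdot 0$ in that Hom-space.

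Completeness is finally achieved by a dimension count. Using (1) and (2) to normalize sequences of upward and downward arrows separately, and using (3)--(5) to rewrite any interior ``$d_i u_j$'' as a cellular word of strictly smaller label, every word in the generators reduces to a normal form $u_{i_k} \cdots u_{i_1}\, d_{j_1} \cdots d_{j_\ell}$ matching a cellular basis element $c^\lambda_{ij}(T(x \cdot 0), T(y \cdot 0))$; counting normal forms gives exactly $\dim \Hom_\O(T(x \cdot 0), T(y \cdot 0)) = \sum_\mu (T(x \cdot 0):\Delta(\mu))(T(y \cdot 0):\Delta(\mu))$, confirming no further relations are needed. The main obstacle will be pinning down the explicit scalars, in particular the correction constant $r$ in the last relation of (5): the three-term right-hand side mixes nearest-neighbor compositions with a deeper cellular word, and separating these contributions requires detailed explicit work inside the non-semisimple module $T(1 \cdot 0)$, for instance via Soergel's combinatorial model for $\T(0)$ or by direct computation with $\mathfrak{sl}_3$ root-vector actions on concrete bases of the tilting modules.
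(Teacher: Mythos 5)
Your overall architecture matches the paper's: generation from the multiplicity-free theorem, verification of the relations via cellularity and the structure of the indecomposable tiltings, and completeness by rewriting every word into the normal form $\underline{u}\,\underline{d}$ and matching against the cellular basis. But there is a genuine gap in your treatment of relation (3). The socle/head argument you give works only for $d_1u_1$ and $d_2u_2$, where the source $T(w_0\cdot 0)=L(w_0\cdot 0)$ is simple, $u_1$ maps it into the socle of $T(st\cdot 0)$, and $d_1$ is the projection onto the head. It does \emph{not} extend ``similarly'' to $d_4u_4$ and $d_5u_5$: here $u_4\colon T(st\cdot 0)\to T(t\cdot 0)$ has non-simple source, its image is not contained in the socle, and $d_4$ is not a projection onto a simple head. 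Worse, the formally identical composite $d_3u_3\in\End_\O(T(st\cdot 0))$ is \emph{nonzero} (it equals $a\,u_1d_1$ with $a\in\Co^\times$ by relation (4)), so no argument that treats $d_3u_3$ and $d_4u_4$ on the same footing --- e.g.\ ``non-split, hence in the radical'' --- can establish $d_4u_4=0$. The paper handles this by a wall-crossing computation: the functor $\theta$ for the wall between the $w_0$- and $st$-chambers sends the complex $T(w_0\cdot 0)\hookrightarrow T(ts\cdot 0)\twoheadrightarrow T(w_0\cdot 0)$ (whose composite is zero) to $T(st\cdot 0)\hookrightarrow T(t\cdot 0)\twoheadrightarrow T(st\cdot 0)$ with maps proportional to $u_4$ and $d_4$; functoriality then forces $d_4u_4=0$. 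Some argument of this kind is indispensable, since the dichotomy between the two loops at $T(st\cdot 0)$ is exactly what distinguishes $T(s\cdot 0)$ from $T(t\cdot 0)$ as extensions.

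A second, smaller gap is your derivation of the signs in (4). Appealing to the $s\leftrightarrow t$ symmetry ``with signs accounting for the relative orientations of the two Verma embedding paths'' is not a proof that the middle scalar equals $-a$; symmetry only shows that the first two scalars agree and the middle two agree. The paper obtains $a'=-a$ only after expanding $d_8u_7$ in the three-element cellular basis of $\Hom_\O(T(s\cdot 0),T(t\cdot 0))$ and composing with $d_4$, $d_6$, $u_3$, $u_5$ to extract the linear equations $c'a'=ba'$, $ca'+c'a=0$, $ca+c'a'=0$, $ca'=ba'$, which simultaneously pin down relation (5) and force $a'=-a$. So the sign is an output of the relation-(5) analysis, not an input you can assert independently. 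Your closing remarks correctly identify the scalar $r$ as requiring explicit computation, but the same caveat applies to the vanishing statements in (3) and the sign in (4); as written, those steps would not go through.
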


In the proof of this theorem we will use the translation functors in $\O$ defined in \cite{Hu}, 7.1. We shall in particular make use of their values on Verma modules and simple modules as described in \cite{Hu}, 7.6-7. We will also need wall-crossing functors. These are the composites of two translation functors, namely  the translation functor from our regular  block $\B(0)$ to the block determined by a semiregular weight on a wall of the dominant chamber, and the corresponding adjoint translation functor.

 We can determine all the indecomposable  tilting modules $T(w)$ by applying such wall-crossing functors: First we observe that $T(w_0) = \Delta(w_0) = L(w_0)$. Applying the wall crossing functor with respect to the $s$-wall to $T(w_0)$ we obtain $T(st)$ as a non-trivial extension of $\Delta(st)$ by $\Delta (w_0)$. Then applying the wall-crossing functor with respect to the $t$-wall to $T(st)$ gives us $T(s)$ with $\Delta$-factors $\Delta(s), \Delta (st), \Delta (ts)$, and $\Delta(w_0)$. Finally, applying the first wall crossing functor to $T(s)$ gives $T(1) \oplus T(st)$ showing that for each $w \in W$ the Verma module $\Delta(w)$ occurs exactly once as a $\Delta$-factor in $T(1)$. Interchanging $s$ and $t$  we obtain analogous
 statements about $T(ts)$ and $T(t)$. We observe that the socle  of $T(w)$ equals $L(w_0)$ for all $w$.
 
\begin{proof} We first observe that (2) is dual to (1). Likewise, the last two relations in (4) are duals of the previous $2$, and the last two relations in (5) are dual to each other. 

The relations (1) follow from the choices we made in the proof of Theorem \ref{generators in O}. 

The map $u_1$ takes $T(w_0) = L(w_0)$ to the socle of $T(st)$, and $d_1$ maps $T(st)$ onto its head $L(w_0) = T(w_0)$. Hence $d_1 u_1 = 0$. Similarly $d_2 u_2 = 0$, so that we have established the first two equalities in (3). 

To see that $d_4 u_4 = 0$ we observe that if $\theta$ denotes the wall-crossing functor with respect to the wall separating the $w_0$-chamber and the $st$-chamber, then $\theta$ takes the complex 
$$ L(w_0) = T(w_0) \hookrightarrow T(ts) \twoheadrightarrow L(w_0) = T(w_0),$$
(in which the maps come from the natural transformations $1 \to \theta$ and $\theta \to 1$ induced by adjointness of the two translation functors defining $\theta$), into the complex 
$$ T(st) \hookrightarrow T(t) \twoheadrightarrow T(st).$$
It is clear from the construction that the maps in this last complex are non-zero multiples of $u_4$ and $d_4$, respectively. Hence $d_4 u_4 = 0$. An analogous argument gives $d_5 u_5 = 0$ and we are done with (3). 

The properties of the cellular bases immediately show that the relations in (4) hold up to some scalars in $\Co$. We now check that these scalars are non-zero.  To see this choose a weight $\mu$ on the common wall of the $1$-chamber and the $s$-chamber. Set $\mu' = t \cdot \mu$ and $\mu'' = s \cdot \mu'$. Then the translation functor $T_0^\mu$ takes $u_3$ into an inclusion $T(\mu'') \oplus T(\mu'') \hookrightarrow T(\mu) \oplus T(\mu'')$. Likewise, $T_0^\mu$ takes $d_3$ into a surjection in the other direction. In other words, $T_0^\mu u_3$ identifies $T(\mu'') \oplus T(\mu'')$ with the socle of $T(\mu) \oplus T(\mu'')$ and $T_0^\mu d_3$ is mapping $T(\mu) \oplus T(\mu'')$ onto its head. We see that the composite is non-zero and hence so is $d_3 u_3$. The non-vanishing of the remaining scalars is checked by similar translations onto appropriate walls.  By symmetry (coming from swapping $s$ and $t$) the first two, respectively the middle two, scalars in (3) coincide. We shall see later that the two scalars involved sum to $0$. 

We now turn to the relations in (5). Here we choose a weight $\mu$ on the wall between the $t$-chamber and the $1$-chamber and set $\mu' = s \cdot \mu$ and $\mu'' = t \cdot \mu'$. Then we have an injection $T(\mu') \hookrightarrow T(\mu)$ and a dual surjection $T(\mu) \twoheadrightarrow T(\mu')$. The composite of these maps have image $T(\mu'') = L(\mu'')$. Applying the translation functor $T_\mu^0$  we obtain an injection $T(s) \hookrightarrow T(1)$ and a surjection $T(1) \twoheadrightarrow T(s)$. By our construction the injection is proportional to $u_7$ and the surjection to $d_7$. The image of the composite $d_7 u_7$ is therefore equal to $T_\mu^0$ applied to $T(\mu'')$, i.e. to $T(ts)$. This means that the diagram 
\[
\begin{tikzcd}
T(s) \arrow{r}{u_7} \arrow{d}{d_5} & T(1) \arrow{d}{d_7}\\
T(ts) \arrow{r}{u_5} & T(s) 
\end{tikzcd}
\]
commutes up to a non-zero scalar in $\Co$, i.e. that the first identity in (5) holds. A symmetrical argument gives the second relation in (5). Again by symmetry the scalars in the two first identities in (5) coincide.

We now prove the third relation in (5). First write $d_8 u_7$ as a linear combination of the $3$ cellular basis elements in $\Hom_\O(T(s), T(t))$
$$ d_8 u_7 = c u_4d_3 + c' u_6d_5 + c'' u_6u_2d_1d_3.$$
Applying $d_4$, respectively $d_6$, to this equation gives via the relations proved so far the identities
$$ c' a' = b a', \text { respectively } c a' + c' a = 0.$$
Here we have named the second scalar in (4) by $a'$ (because we haven't yet seen that it is $-a$).
Likewise, precomposing with $u_3$, respectively $u_5$, leads to
$$ ca + c'a' = 0, \text { respectively } c a' = b a'.$$
This implies (5) and shows also that $a' = -a$.

Now the only thing left to prove is that (1)-(5) are all relations among the given generators. To see this we need to show that any path (string of $u$'s and $d$'s, say from $x$ to $y$) may be written via the above relations as a linear combination of the known cellular basis elements in $\Hom_\O(T(x), T(y))$. In such a path we claim that the relations (1)-(5) allow us to move all the $d_j$ to the right of all $u_i$. In fact, suppose $d_j u_i$ occurs somewhere in our path. If it is one of the combinations in relation (3) then the path is zero. Otherwise, it occurs in (4) or (5) and these relations say that $d_ju_i$ is a linear combination of certain $u_{i'} d_{j'}$ or at worst also a term which is a product of two $u$'s and two $d$'s. So by repeated use of this argument we achieve the claim. This means that our path equals a linear combination of $\underline u \, \underline d$, where $\underline u$, respectively $\underline d$, is a path involving only $u$'s, respectively $d$'s. But by our choice of generators $\underline u \underline d$ is therefore one of the cellular basis elements of $\Hom_\O(T(x), T(y))$. 
\end{proof}

\begin{rem} \begin{enumerate}
\item Stroppel, \cite{Str}, 5.1.2, and Marko, \cite{FM}, Theorem 4.1, have obtained similar results on the endomorphism rings for projectives in $\O(0)$ for $\mathfrak {sl}_3(\Co)$.
\item The above proof reveals that the difficulty in the multiplicity free case in giving a presentation of the morphisms in $\T(0)$ by generators and relations is to find the relations, i.e. to determine the coefficients when writing elements of the form $d_j u_i$ as a linear combination of the elements from $B'(0)$. Cellularity of this basis limits which basis vectors occur with non-zero coefficients. However, even in the $\mathfrak {sl}_3$ case we did not determine the $3$ scalars $a, b, r$ (see relations (4) and (5) in Theorem \ref{generators/relations}). Moreover, our proof of Theorem \ref{generators/relations} relies on ad hoc calculations. Outside the multiplicity free case we have even bigger difficulties. So clearly the methods used in this section are insufficient to achieve such a presentation for $\T(0)$ in general. 
\end{enumerate}
\end{rem}

\section{Frobenius kernels and module categories for related subgroup schemes} \label{G_rT}

In this and the next section we shall prove results  for the subgroup schemes $G_rT$ in $G$ similar to those in Section \ref{G}. Recall that if $K$ is a subgroup scheme of $G$ then $\C(K)$ denotes the category of finite dimensional representations of $K$. 
We begin by recalling the definitions and main properties of $G_rT$ and its module category $\C(G_rT)$.  Again we refer to \cite{RAG} (mainly Chapters II. 9 and 11) for details.

\subsection{Subgroup schemes arising from Frobenius homomorphisms}
Let $F:G \to G$ denote the Frobenius endomorphism on $G$. Then for each $r \geq 1$ we denote by $G_r$ the kernel of $F^r$ considered as a closed subgroup scheme of $G$. Likewise we have corresponding subgroup schemes $T_r$, $B_r$, etc.
In addition, we shall also need the subgroup schemes
$$ G_rT = (F^r)^{-1}(T) \text { and } G_rB = (F^r)^{-1}(B).$$

If $M \in \C(G)$ we denote by $M^{(r)}$ the $r$-th Frobenius twist of $M$. This means that $M^{(r)} = M$ as $k$-vector space and that the action of $G$ on $M^{(r)}$ is given by $g m = F^r(g) m, g \in G, m \in M$. Note that $G_r$ acts trivially on $M^{(r)}$. Moreover, $\lambda$ is a weight of $M$ if and only if $p^r \lambda $  is a weight of  $M^{(r)}$. More precisely, $\Cha M^{(r)} = \sum_\lambda \dim M_\lambda e^{p^r \lambda}$, where the sum runs over the weights of $M$.

We define the Frobenius twists of modules in $\C(T)$, $\C(B)$, etc. similarly.

\subsection{Steinberg's tensor product theorem and simple modules in $\C(G_r)$} \label{G_r}

Let $r \geq 1$ and set 
$$X_r = \{\lambda \in X^+ | \langle \lambda, \alpha^{\vee}\rangle < p^r \text { for all } \alpha \in S\}.$$
The elements in $X_r$ are called the $r$-restricted weights. Note that $1$-restricted weights are also just called restricted weights.

Let $\lambda \in X$. 
We can then write a $p$-adic expansion $\lambda = \lambda^0 + p\lambda^1 + \cdots + p^r \lambda^r$ of $\lambda$ with $\lambda^i \in X_1$ for all $i$. In this notation Steinberg's tensor product theorem, \cite{St}, Theorem 1.1, says
$$ L(\lambda) = L(\lambda^0) \otimes L(\lambda^1)^{(1)} \otimes \cdots \otimes L(\lambda^r)^{(r)}.$$

The simple modules in $\C(G_r)$ are given by a theorem due to Curtis \cite{Cu}. It says 
\begin{equation} \label{Curtis} 
L(\lambda)_{|_{G_r}} \text { is simple for all $\lambda \in X_r$ 
and thus $X_r$ parametrizes the set of simple $G_r$-modules.}
\end{equation}

\subsection{$G_rT$-modules} \label{G_rT-mod}
Again we take $r \geq 1$. We write $\C_r$ short for $\C(G_rT)$, the category of finite dimensional $G_rT$-modules. If $\lambda \in X$ we write 
$$\lambda = \lambda^0 + p^r \lambda^1$$
 with $\lambda^0 \in X_r, \lambda^1 \in X$. Of course $\lambda^0$ and $\lambda^1$ depend heavily on $r$, but we have omitted $r$ from the notation.  It will always be clear from the context which $r$ we are working with.

We set  $L_r(\lambda) = L(\lambda^0)_{|_{G_rT}} \otimes p^r \lambda^1$. It follows from Section \ref{G_r} (see \cite{RAG} Proposition II.9.6 for details) that the simple modules in $\C_r$ are $(L_r(\lambda))_{\lambda \in X}$.

The standard and costandard objects in $\C_r$ may be defined similarly to the way we defined such objects in $\C(G)$ in Section \ref{G}. Namely, we set for each $\lambda \in X$
$$ \nabla_r(\lambda) = \Ind_{B_rT}^{G_rT} \lambda \text { and } \Delta_r(\lambda) = \Ind_{B_r^+T}^{G_rT} (\lambda - 2(p^r-1) \rho).$$
Alternatively, we have 
$$ \nabla_r(\lambda) = \Ind_{B}^{G_rB} \lambda \text { and } \Delta_r(\lambda) = \Ind_{B^+}^{G_rB^+} (\lambda - 2(p^r-1) \rho),$$
i.e. $\nabla_r(\lambda) $ and $\Delta_r(\lambda)$ are restrictions to $G_rT$ of corresponding $G_rB$-, respectively $G_rB^+$-modules.

The simple module $L_r(\lambda) \in \C_r$ is now realized as the socle of $\nabla_r(\lambda)$ or dually as the head of $\Delta_r(\lambda)$.

\begin{rem} Note that \cite{RAG}, Section II.9.1 uses a different notation for the standard and costandard modules. 
\end{rem}

As $T$-modules (in fact as $U_r^+T$-, respectively $U_rT$-modules) we have $\nabla_r(\lambda) \simeq k[U_r^+] \otimes \lambda$ and $\Delta_r(\lambda) \simeq k[U_r] \otimes (\lambda - 2(p^r-1)\rho)$. It follows that we have isomorphisms  $\nabla_r(\lambda) \simeq \nabla_r (\mu) \otimes (\lambda - \mu)$ in $\C(T)$ for all  $\lambda, \mu \in X$.  In the special case $\lambda = (p^r-1)\rho$ we have
isomorphisms of $G_rT$-modules 
\begin{equation} \label{St}
\nabla_r((p^r-1)\rho) \simeq St_r \simeq \Delta_r ((p^r-1)\rho).
\end{equation}
It follows that 
\begin{equation} \label{characters of baby-Vermas}
\Cha \nabla_r(\lambda) = \Cha \Delta_r(\lambda) = \chi ((p^r-1)\rho) e^{\lambda - (p^r-1)\rho}.
\end{equation}

Note that the antiautomorphism $\tau$ we used in Section 2 to define our duality in $C(G)$ restricts to an antiautomorphism on $G_rT$. Hence it gives us a duality on $\C_r$ as well. It is then a fact (which gives another explanation for the first equality in (\ref{characters of baby-Vermas})) that in $\C_r$ we have 
$$ \nabla_r(\lambda) \simeq \, ^\tau\Delta_r(\lambda)$$
for all $\lambda \in X$, see \cite{RAG}, II.9.3(5).

\subsection{Projective $G_rT$-modules}
A notable difference between $\C(G)$ and $\C_r$ is that the first contains no projective objects whereas the second has enough projectives. Moreover, in the category $\C_r$ to be projective is the same as to be injective, see \cite{RAG} Lemma.II.9.4. As we shall recall shortly, to be projective in $\C_r$ is also the same as being tilting.

Let $\lambda \in X$ and denote by $Q_r(\lambda)$ the projective cover  of $L_r(\lambda)$ in $\C_r$. Then $Q_r(\lambda)$ is also injective and since it is indecomposable it must be the injective envelope of some simple module in $\C_r$. As explained in \cite{RAG}, Section II.11.5 this simple module is $L_r(\lambda)$. 

By construction $\Delta_r(\lambda)$ is projective for $B_r$. In fact, $B_r = U_r T_r$ and as a $U_r$-module we have $\Delta_r(\lambda) \simeq k[U_r]$. Likewise $\nabla_r(\lambda)_{|_{U^+_r}} \simeq k[U_r^+]$ so that $\nabla_r(\lambda)$ is projective as a $B^+_r$-module. 

It follows from these observations that if $M \in \C_r$ has a $\Delta_r$-filtration then $M$ is projective for $B_r$, while if $M$ has a $\nabla_r$-filtration then $M$ is projective for $B_r^+$. It turns out that the converse is also true (\cite{RAG}, Proposition II.11.2), i.e. we have for $M \in \C_r$
\begin{equation} \label{filtration-criteria1}
M \text { is  $ B_r$-projective (equivalently injective) if and only if $M$  has a $ \Delta_r$-filtration}, 
\end{equation}
and
\begin{equation}\label{filtration-criteria2}
 M \text{ is $B_r^+$-projective (equivalently injective) if and only if $M$ has a $\nabla_r$-filtration}.
\end{equation}
In analogy with the definition of tilting modules in $\C(G)$ we say that $Q \in \C_r$ is tilting if  $Q$ has both a $\Delta_r$- and a $\nabla_r$-filtration. The multiplicities in such filtrations are denoted $(Q:\Delta_r(\lambda))$ and $(Q:\nabla_r(\lambda))$. 

By (\ref{filtration-criteria1}) and (\ref{filtration-criteria2}) we see that $Q$ is tilting if and only if $Q$ is projective for both $B_r$ and $B_r^+$. As $G_r = U_rB_r^+$ we conclude that
\begin{equation} \label{tilting=projective}
Q \in \C_r \text { is tilting if and only if $Q$ is projective if and only if $Q$ is injective. }
\end{equation}

Now of course we have 
\begin{equation}
Q \text { is projective  if and only if } Q = \oplus_{\lambda \in X} Q_r(\lambda)^{\oplus n_\lambda} \text { for some } n_\lambda \in \N \text { (almost all $0$)}.
\end{equation}

\subsection{The category $\P_r$ of projective $G_rT$-modules} \label{P_r}

We let $\P_r $ denote the subcategory of $\C_r$ consisting of all projective modules. By the results in the previous subsection this category is the same as the subcategory consisting of all tilting modules. We shall now deduce the exact relations between the PIM's in $\P_r$ and the indecomposable tilting modules.   

As in Section \ref{G_rT-mod} we write $ \lambda = \lambda^0 + p^r \lambda^1$
with $\lambda^0 \in X_r$ and $\lambda^1 \in X$. We now define
$$ \tilde \lambda = 2(p^r-1)\rho + w_0\lambda^0 + p^r \lambda^1,$$
where $w_0$ denotes the longest element in the Weyl group for $G$. Then the map $\lambda \mapsto \tilde \lambda$ is a bijection on $X$ which fixes all elements of $-\rho + p^rX$ and carries the box $p^r\mu +X_r$ onto the translated box  $p^r(\mu +\rho) - \rho + X_r$ for any $\mu \in X$.

Weight considerations (using e.g. (\ref{St})) show that the highest weight of $Q_r(\lambda)$ is $\tilde \lambda$.
This observation implies that the indecomposable tilting module $T_r(\tilde \lambda)$ in $\C_r$ with highest weight $\tilde \lambda$ is given by the following formula (see \cite{RAG}, II.11.3(1) for the second identity.
\begin{equation} \label{tilting=PIM}
 T_r(\tilde \lambda) = Q_r(\lambda) = Q_r(\lambda^0) \otimes p^r \lambda^1.
\end{equation}

\begin{examplecounter} \label{example2} Consider the special weight $(p^r-1)\rho \in X_r$.  For this weight we have
$$ T_r((p^r-1)\rho) = Q_r((p^r-1)\rho) = L_r((p^r-1)\rho) = \nabla_r((p^r-1)\rho)= \Delta_r((p^r-1)\rho).$$
In fact, these equations holds for all special weights, i.e. for all weights in $-\rho + p^rX$.
\end{examplecounter}

\begin{rem}
There are some interesting partially proved conjectures which connect indecomposable objects in $\P_r$ with objects in $\C(G)$ and $\T(G)$:
\begin{enumerate}
\item(The Humphreys-Verma conjecture, \cite{HV}, Theorem A)
Let $\lambda \in X_r$. There exists an object $\bar Q_r(\lambda) \in \C(G)$ such that $Q_r(\lambda) = \bar Q_r(\lambda)_{|_{G_rT}}$.

This conjecture is known to hold for $p \geq 2h-2$ in which case $\bar Q_r(\lambda)$ is the injective envelope of $L(\lambda)$ in a certain bounded subcategory of $\C(G)$, see \cite{RAG} II.11.11.

\item(Donkin's tilting conjecture, \cite{Do} Conjecture (2.2))
Let $\lambda \in X_r$. In the above notation $Q_r(\lambda) = T(\tilde \lambda)_{|_{G_rT}}$,  or equivalently $T_r(\tilde \lambda) =  T(\tilde \lambda)_{|_{G_rT}}$.

This conjecture was proved by Donkin (in \cite{Do}, Section 2) for $p \geq 2h-2$. It was recently shown to fail for $p=2$ for $G$ of type $G_2$, see \cite{BNPS}, Theorem 4.1.1.
\end{enumerate}
\end{rem}

\subsection{Reciprocity laws}

In analogy with (\ref{ext-vanishing}) we have 
\begin{equation} \label{ext-vanishing G_r}  \text {Let } \lambda, \mu \in X. \text { Then } \Ext^i_{G_rT}(\Delta_r(\lambda), \nabla_r(\mu)) = \begin{cases} {k \text { if } i = 0 \text { and } \mu = \lambda,}\\ {0 \text { otherwise.}}\end{cases} 
\end{equation}

We then also get analogues of (\ref{nabla-mult}) and (\ref{delta-mult}) in $\C_r$. This implies in particular the following reciprocity law. 
\begin{equation}
\text {Let $\lambda, \mu \in X$. Then } (Q_r(\lambda):\Delta_r(\mu)) = [\nabla_r(\mu):L_r(\lambda)].
\end{equation}
In fact, according to (\ref{ext-vanishing G_r}) the left-hand side equals $\dim \Hom_G(Q_r(\lambda), \nabla_r(\mu))$. This equals the right-hand side, because $Q_r(\lambda)$ is the projective cover of $L_r(\lambda)$.

Using (\ref{tilting=PIM}) we can also formulate this reciprocity in terms of indecomposable tilting modules: Let $\lambda, \mu \in X$. Then 
\begin{equation} \label{tilt-reciprocity}
(T_r(\tilde \lambda ):\Delta_r(\mu)) = [\nabla_r(\mu):L_r(\lambda)].
\end{equation}

\section{Homomorphisms in $\P_r$} \label{homomorphisms}
\subsection{Cellularity of $\P_r$} \label{cellularity for G_rT}
Arguing as in Section 2.4 we can now prove:
\begin{thm}\label{cellularity for P_r}
$\P_r$ is a cellular category.
\end{thm}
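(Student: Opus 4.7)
The plan is to transport the proof of Proposition \ref{cellularity for G} verbatim to the $G_rT$-setting, replacing $X^+$ by the full weight lattice $X$ with its partial order $\leq$ induced by $R^+$, and replacing $(\Delta(\lambda), \nabla(\lambda), T(\lambda))$ by $(\Delta_r(\lambda), \nabla_r(\lambda), T_r(\lambda))$. The indecomposable objects of $\P_r$ are indexed by $X$ via $\lambda \mapsto T_r(\tilde\lambda) = Q_r(\lambda)$ (see (\ref{tilting=PIM})), so the poset datum is $(X,\leq)$.

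First I would equip $\P_r$ with an anti-involution $\dagger$ in the manner of Subsection \ref{tiltcat}. The antiautomorphism $\tau$ of $G$ restricts to $G_rT$ and gives a character-preserving duality on $\C_r$ (see Subsection \ref{G_rT-mod}). Since each $T_r(\lambda)$ is characterized by its highest weight among indecomposable tilting modules in $\C_r$, we obtain $^\tau T_r(\lambda) \simeq T_r(\lambda)$ for all $\lambda \in X$, and under the standing hypothesis $p>2$ we can choose symmetric isomorphisms $\phi_\lambda : T_r(\lambda) \to \,^\tau T_r(\lambda)$ exactly as in Remark \ref{sym}. The argument of Proposition \ref{dagger} then produces the contravariant involution $\dagger$ on $\P_r$.

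Next I would construct the cell datum. For $Q \in \P_r$ and $\lambda \in X$ set
\[
K(Q,\lambda) = \{1, 2, \ldots, (Q:\Delta_r(\lambda))\},
\]
which by the $\C_r$-analogue of (\ref{nabla-mult}) (an immediate consequence of (\ref{ext-vanishing G_r})) has cardinality $\dim_k \Hom_{G_rT}(\Delta_r(\lambda), Q)$. Choose a basis $g_1^\lambda(Q), \ldots, g_r^\lambda(Q)$ of this Hom-space. Each such map lifts to $\bar g_i^\lambda(Q) \in \Hom_{G_rT}(T_r(\lambda), Q)$: the obstruction lies in $\Ext^1_{G_rT}(T_r(\lambda)/\Delta_r(\lambda), Q)$, which vanishes by (\ref{ext-vanishing G_r}) since $T_r(\lambda)/\Delta_r(\lambda)$ has a $\Delta_r$-filtration with sections $\Delta_r(\mu)$, $\mu < \lambda$, and $Q$ has a $\nabla_r$-filtration. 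Define $\bar f_i^\lambda(Q) = \bar g_i^\lambda(Q)^\dagger$ and
\[
C^\lambda(P,Q)(i,j) = c_{ij}^\lambda(P,Q) := \bar g_i^\lambda(Q) \circ \bar f_j^\lambda(P).
\]
The axioms C-II and C-III then follow by the arguments given in the proof of Proposition \ref{cellularity for G}: C-II is immediate from the fact that $\dagger$ is a contravariant involution, and C-III is obtained by expressing $\varphi \circ g_i^\lambda(Q)$ in the basis $(g_m^\lambda(R))_m$ and observing that the discrepancy $\varphi \circ \bar g_i^\lambda(Q) - \sum_m r_\varphi(i,m) \bar g_m^\lambda(R)$ lies in $\P_r^{<\lambda}$ (defined as in Subsection \ref{cellular categories}).

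The main obstacle is C-I: showing that the cellular elements $c_{ij}^\lambda(P,Q)$, as $(\lambda, i, j)$ varies, form a basis of $\Hom_{G_rT}(P,Q)$. In the $G$-case this was quoted from \cite{AST}, Theorem 3.1. The proof there uses only the existence of both a $\Delta$- and a $\nabla$-filtration on tilting modules plus the Ext-vanishing between standards and costandards; these are exactly the ingredients supplied in $\P_r$ by (\ref{filtration-criteria1}), (\ref{filtration-criteria2}), (\ref{tilting=projective}), and (\ref{ext-vanishing G_r}). Concretely, applying $\Hom_{G_rT}(\Delta_r(\lambda), -)$ to a $\nabla_r$-filtration of $Q$ and dually $\Hom_{G_rT}(-, \nabla_r(\lambda))$ to a $\Delta_r$-filtration of $P$ yields the dimension count $\dim_k \Hom_{G_rT}(P,Q) = \sum_\lambda (P:\Delta_r(\lambda))(Q:\Delta_r(\lambda))$, matching the number of triples, while linear independence of the $c_{ij}^\lambda(P,Q)$ is inherited from the linear independence of the $g_i^\lambda(Q)$ and $g_j^\lambda(P)$ by a standard highest-weight argument filtering by $\P_r^{\leq\lambda}/\P_r^{<\lambda}$. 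I expect no genuinely new difficulty here beyond checking that the AST argument translates step-by-step, since every structural ingredient needed has an exact $G_rT$-counterpart in Section \ref{G_rT}.
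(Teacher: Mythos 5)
Your proposal is correct and follows essentially the same route as the paper: the paper's proof of Theorem \ref{cellularity for P_r} likewise sets $\Lambda = X$, takes $K(P,\lambda) = \{1,\dots,(P:\Delta_r(\lambda))\}$, lifts basis elements of $\Hom_{G_rT}(\Delta_r(\lambda),Q)$ to $\Hom_{G_rT}(T_r(\lambda),Q)$ using (\ref{ext-vanishing G_r}), applies the $\dagger$-functor built from $\tau$ as in Subsection \ref{tiltcat}, and then invokes the argument of Proposition \ref{cellularity for G} wholesale. Your extra detail on axiom C-I (the dimension count and linear independence) is just an unpacking of the same citation to \cite{AST}, not a different method.
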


\begin{proof} Set $\Lambda = X$ and define for each $P \in \P_r$ and $\lambda \in \Lambda$ the set $K(P, \lambda) = \{1, 2, \cdots , (P:\Delta_r(\lambda))\}$. If also $Q \in \P_r$ define
 $$c_{ij}^\lambda(P,Q) = \bar g_i^\lambda(Q) \circ \bar f_j^\lambda(P),$$ 
where $\bar g_i^\lambda(Q)$, respectively $\bar f_j^\lambda(P)$, is a lift of a basis element $g_i^\lambda(Q) \in \Hom_{G_rT}(\Delta_r(\lambda), Q)$, respectively of a ``dual basis" element $f_j^\lambda(P) \in \Hom_{G_rT}(P, \nabla_r(\lambda))$ (obtained by applying the $\dagger$-functor analogous to the $G$-case from Subsection \ref{tiltcat}). This gives us exactly as in the proof of Theorem \ref{cellularity for G} a cellular datum for $\P_r$.
\end{proof}
.
\begin{rem}
In contrast to the cellular datum for $\T(G) $ presented in Theorem \ref{cellularity for G} the poset $(X, \leq)$  appearing in the cellular datum for $P_r$ in Theorem \ref{cellularity for P_r} is not lower (nor upper) finite. If we ignore this and remember our choice $\bar g^\lambda (T_r(\lambda)) = id_{T_r(\lambda)}$, see Remark \ref{choice},  then $\P_r$ is also an SOACC (cf. Theorem \ref{SOACC for G}), i.e. it satisfies all the other requirements in \cite{EL}, Definition 2.4. In the general framework treated in \cite{BS} it fits into the case of a module category for an 'essentially finite based quasi-hereditary algebra' in \cite{BS}, Subsection 5.2
\end{rem}

\subsection{Weight bounds} \label{weight bounds}
Let $P, Q \in \P_r$. Then, by (\ref{tilting=projective}) above,  $P$ and $Q$ are tilting modules and hence direct sums of certain $T_r(\lambda)$'s. The vector space $\Hom_{G_rT}(P,Q)$ is therefore a sum of certain $\Hom_{G_rT}(T_r(\lambda), T_r(\mu))$'s. To be precise 
$$ \Hom_{G_rT}(P,Q) \simeq \bigoplus_{\lambda, \mu \in X} \Hom_{G_rT}(T_r(\lambda), T_r(\mu))^{\oplus (P:T_r(\lambda))(Q:T_r(\mu))}.$$

We shall now prove that if $\lambda$ and $\mu$ are sufficiently far apart then  $\Hom_{G_rT}(T_r(\lambda), T_r(\mu)) = 0$. Recall the definition of $\tilde \lambda$ from Subsection \ref{P_r}.

\begin{lem} \label{bound}

Let $\lambda, \nu \in X$. If $(T_r(\tilde \lambda):\Delta_r(\nu)) \neq 0$ then $\lambda \leq \nu \leq \tilde \lambda$.
\end{lem}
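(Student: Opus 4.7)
The plan is to reduce the two desired inequalities to weight considerations in two different modules, using the identification $T_r(\tilde\lambda) = Q_r(\lambda)$ from (\ref{tilting=PIM}) and the reciprocity law (\ref{tilt-reciprocity}) simultaneously.

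For the upper bound $\nu \leq \tilde\lambda$, I would argue directly: if $\Delta_r(\nu)$ appears with positive multiplicity in a $\Delta_r$-filtration of $T_r(\tilde\lambda)$, then since characters are additive on short exact sequences, every weight of $\Delta_r(\nu)$ is a weight of $T_r(\tilde\lambda)$. In particular the highest weight $\nu$ of $\Delta_r(\nu)$ (read off from (\ref{characters of baby-Vermas}), since the highest weight of $\chi((p^r-1)\rho)$ is $(p^r-1)\rho$) is a weight of $T_r(\tilde\lambda) = Q_r(\lambda)$. As noted in Subsection \ref{P_r}, the highest weight of $Q_r(\lambda)$ is $\tilde\lambda$, so $\nu \leq \tilde\lambda$.

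For the lower bound $\lambda \leq \nu$, I would invoke the reciprocity (\ref{tilt-reciprocity}): the hypothesis $(T_r(\tilde\lambda):\Delta_r(\nu)) \neq 0$ gives $[\nabla_r(\nu):L_r(\lambda)] \neq 0$, so $L_r(\lambda)$ is a composition factor of $\nabla_r(\nu)$. The highest weight of $L_r(\lambda) = L(\lambda^0)|_{G_rT} \otimes p^r\lambda^1$ is $\lambda^0 + p^r\lambda^1 = \lambda$, so $\lambda$ is a weight of $\nabla_r(\nu)$. Since the highest weight of $\nabla_r(\nu)$ is $\nu$ (again by (\ref{characters of baby-Vermas})), we conclude $\lambda \leq \nu$.

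There is no real obstacle here; the only step that requires care is matching conventions so that the highest weights of $\Delta_r(\nu)$, $\nabla_r(\nu)$, and $L_r(\lambda)$ really are $\nu$, $\nu$, and $\lambda$ respectively, which follows cleanly from the character formula (\ref{characters of baby-Vermas}) and the tensor decomposition $L_r(\lambda) = L(\lambda^0)|_{G_rT} \otimes p^r\lambda^1$ from Subsection \ref{G_rT-mod}. Both halves of the lemma then reduce to the single observation that a weight appearing in a module is dominated by the module's highest weight.
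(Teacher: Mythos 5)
Your proof is correct and follows essentially the same route as the paper's: the upper bound comes from $\nu$ being a weight of $T_r(\tilde\lambda)$, whose highest weight is $\tilde\lambda$, and the lower bound from the reciprocity $(T_r(\tilde\lambda):\Delta_r(\nu)) = [\nabla_r(\nu):L_r(\lambda)]$ together with the fact that all weights of $\nabla_r(\nu)$ are $\leq \nu$. You have merely spelled out the highest-weight bookkeeping that the paper leaves implicit.
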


\begin{proof}
Clearly, if $(T_r(\tilde \lambda):\Delta_r(\nu)) \neq 0$ then $\nu$ is a weight of $T_r(\tilde \lambda)$. This implies that $\nu \leq \tilde \lambda$. Moreover, by the reciprocity (\ref{tilt-reciprocity}) we have $(T_r(\tilde \lambda):\Delta_r(\nu)) = [\nabla_r(\nu):L_r(\lambda)]$. This gives the other inequality $\nu \geq \lambda$.
\end{proof}

\begin{rem}
The proof of Lemma \ref{bound} shows that $(T_r(\tilde \lambda):\Delta_r(\tilde \lambda)) = 1 = (T_r(\tilde \lambda):\Delta_r(\lambda))$. Consequently, the inequalities therein are as good as possible.
\end{rem}

\begin{prop} \label{hom-bounds}
Let $\lambda, \mu \in X$. If $\Hom_{G_rT}(T_r(\tilde \lambda), T_r(\tilde \mu)) \neq 0$ 
then $\mu \leq \tilde \lambda$ and $\lambda \leq \tilde \mu$.
\end{prop}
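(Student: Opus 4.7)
The plan is to read the bounds off directly from the cellular basis of $\Hom_{G_rT}(T_r(\tilde\lambda),T_r(\tilde\mu))$ furnished by Theorem \ref{cellularity for P_r}. First I would observe that by property (C-I) from the proof of that theorem, this Hom-space has a basis consisting of the elements $c_{ij}^\nu(T_r(\tilde\lambda),T_r(\tilde\mu)) = \bar g_i^\nu(T_r(\tilde\mu))\circ \bar f_j^\nu(T_r(\tilde\lambda))$, indexed by $\nu \in X$, $i \in K(T_r(\tilde\mu),\nu)$, and $j \in K(T_r(\tilde\lambda),\nu)$. If $\Hom_{G_rT}(T_r(\tilde\lambda),T_r(\tilde\mu)) \neq 0$, at least one such basis element must exist, so for the corresponding $\nu$ both index sets are non-empty, meaning
\[
(T_r(\tilde\lambda):\Delta_r(\nu)) \neq 0 \qquad \text{and} \qquad (T_r(\tilde\mu):\Delta_r(\nu)) \neq 0.
\]

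Next I would invoke Lemma \ref{bound} twice. Applied to $T_r(\tilde\lambda)$ the first non-vanishing yields $\lambda \leq \nu \leq \tilde\lambda$, and applied to $T_r(\tilde\mu)$ the second yields $\mu \leq \nu \leq \tilde\mu$. Chaining the inner and outer halves of these pairs of inequalities gives $\mu \leq \nu \leq \tilde\lambda$ and $\lambda \leq \nu \leq \tilde\mu$, whence $\mu \leq \tilde\lambda$ and $\lambda \leq \tilde\mu$, which is exactly the claim.

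I do not anticipate any serious obstacle; once the cellular structure on $\P_r$ is in place together with the weight bounds of Lemma \ref{bound}, the proposition is essentially a one-line consequence. The only minor point to double-check is that the cellular basis description really forces the existence of such a common $\nu$ for any non-zero morphism, but this is immediate from (C-I) since each basis element is tied to a single value of $\nu$ and the indexing sets $K(-,\nu)$ are empty exactly when the relevant $\Delta_r$-multiplicity vanishes.
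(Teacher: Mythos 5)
Your proof is correct and is essentially identical to the paper's: both reduce the non-vanishing of the Hom-space to the existence of a common $\nu$ with $(T_r(\tilde\lambda):\Delta_r(\nu))\neq 0$ and $(T_r(\tilde\mu):\Delta_r(\nu))\neq 0$ (the paper via the dimension formula for Hom-spaces between tilting modules, you via property (C-I) of the cellular basis, which is the same fact), and then apply Lemma \ref{bound} twice and chain the inequalities.
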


\begin{proof}
We have $\Hom_{G_rT}(T_r(\tilde \lambda), T_r(\tilde \mu)) \neq 0$ if and only if there exists $\nu \in X$ such that  $(T_r(\tilde \lambda):\Delta_r(\nu))$ and  $(T_r(\tilde \mu):\Delta_r(\nu))$ are both non-zero. Then by Lemma \ref{bound} the non-vanishing of $\Hom_{G_rT}(T_r(\tilde \lambda), T_r(\tilde \mu))$ implies that $ \mu \leq \nu \leq \tilde \lambda$ and $\lambda \leq \nu \leq \tilde \mu$.
\end{proof}

\begin{rem}
\begin{enumerate}
\item The bounds in Proposition \ref{hom-bounds} are always achieved: Suppose $\mu = \tilde \lambda$. Then $\Hom_{G_rT}(T_r(\tilde \lambda), T_r(\tilde \mu)) = \Hom_{G_rT}(T_r(\mu), T_r(\tilde \mu)) = \Hom_{G_rT}(T_r(\mu), Q_r(\mu)) = k$, 
because $[T_r(\mu):L_r(\mu)] = 1$. Likewise, if $ \lambda = \tilde \mu$ then   $\Hom_{G_rT}(T_r(\tilde \lambda), T_r(\tilde \mu)) = \Hom_{G_rT}(T_r(\tilde \lambda), T_r(\lambda)) = k$. 
\item Suppose $\lambda \in -\rho + p^r X$. Then $\lambda = \tilde  \lambda$ so that in this case the proposition says that 
$$\Hom_{G_rT}(T_r(\tilde \lambda), T_r(\tilde \mu))= \begin{cases} {k \text { if } \mu = \lambda,}\\ {0 \; \text { otherwise.}} \end{cases}$$

This can also be seen via Example \ref{example2}.
\item The proposition implies that if $\Hom_{G_rT}(T_r(\nu), T_r(\eta)) \neq 0$ then $ -2(p^r-1)\rho \leq  \nu - \eta \leq 2(p^r-1)\rho$. Note that there exists weights $\eta$ and $\mu$ for which these bounds are realized: It is easy to check that for instance $\Hom_{G_rT}(T_r(0), T_r(2(p^r-1)\rho)) = k$.
\item Combined with the fact that $T_r(\nu + p^r \eta) \simeq T_r(\nu) \otimes (p^r\eta)$ for all $\nu, \eta \in X$, Proposition \ref{hom-bounds} reduces the problem of finding all homomorphism spaces in $\P_r$ to a finite one: It is enough to determine $\Hom_{G_rT}(T_r(\tilde \lambda), T_r(\tilde \mu))$ for the finite set of pairs $(\lambda, \mu)$, where $\lambda \in X_r$ and $\mu$ satisfies $\mu \leq \tilde \lambda $ and $\lambda \leq \tilde \mu$.
\end{enumerate}
\end{rem}

\subsection{A set of generators}
The results in Section \ref{weight bounds} combined with the cellularity of $\P_r$ allow us now to single out a finite set of generators for the family of homomorphisms in $\P_r$. Using notation as in the proof of Theorem \ref{cellularity for P_r} we set 
$$ B_r ^\lambda (\mu) = \{\bar g_i^\lambda(T_r(\mu)) | i \in K(T_r(\mu), \lambda) \}$$
for all $\lambda, \mu \in X$. Note that by definition the lift $\bar f^\lambda(T_r(\lambda))$ of the basis element $f^\lambda(T_r(\lambda) \in \Hom_{G_rT}(T_r(\lambda), \nabla_r(\lambda)) = k$ is the dual of $\bar g^\lambda(T_r(\lambda))$, which we in accordance with Remark \ref{choice} choose to be the identity on $T_r(\lambda)$.  Thus $B_r^\lambda(\mu)$ consists of cellular basis elements.

Set now
$$  B_r = \bigcup_{\lambda \in X_r; \lambda \leq \mu \leq \tilde \lambda} B_r^\lambda(\mu).$$
Then we have the following analogue of Theorem \ref{generators for G}.
\begin{thm} \label{generators for G_r} The tuple $(\{T_r(\lambda) \mid \lambda \in X_r\}, B_r)$ generates $\P_r$ as an additive, $k$-linear category with duality. On the level of morphisms this means that every morphism in $\P_r$ is obtained from $B_r$ by taking direct sums, duals and compositions, and by tensoring with elements of $p^rX$. 
\end{thm}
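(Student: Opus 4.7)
I follow the strategy of Theorem~\ref{generators for G}, adding the step of tensoring by elements of $p^rX$ to reduce general weights to the restricted domain $X_r$. For the object-level claim, every $P \in \P_r$ is a finite direct sum of indecomposables $T_r(\nu)$, $\nu \in X$; writing $\nu = \nu^0 + p^r\nu^1$ with $\nu^0 \in X_r$, the tensor identity $T_r(\nu^0 + p^r\nu^1) \simeq T_r(\nu^0) \otimes p^r\nu^1$ from Subsection~\ref{P_r} rewrites each indecomposable as a tensor-shift of some $T_r(\lambda)$ with $\lambda \in X_r$. For morphisms, cellularity (Theorem~\ref{cellularity for P_r}) expresses every morphism as a linear combination of cellular basis elements $c_{ij}^\sigma(P,Q) = \bar g_i^\sigma(Q) \circ \bar g_j^\sigma(P)^\dagger$; since $\dagger$-duals are among the permitted operations, it suffices to show that every $\bar g_i^\sigma(T_r(\mu))$, for $\sigma, \mu \in X$, can be obtained from $B_r$ by direct sums, duals, compositions, and tensoring with $p^rX$.

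Given such $\bar g_i^\sigma(T_r(\mu)): T_r(\sigma) \to T_r(\mu)$, I write $\sigma = \sigma^0 + p^r\sigma^1$ with $\sigma^0 \in X_r$ and tensor the entire morphism by $p^r(-\sigma^1)$. Applying the tensor identity to both source and target, the result is a morphism $T_r(\sigma^0) \to T_r(\mu')$ with $\mu' := \mu - p^r\sigma^1$, which under the canonical identification has the form $\bar g_j^{\sigma^0}(T_r(\mu'))$ for some index $j$. It therefore remains to verify that $\bar g_j^{\sigma^0}(T_r(\mu')) \in B_r$, i.e., that $\sigma^0 \leq \mu' \leq \widetilde{\sigma^0}$. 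The lower inequality is equivalent (after adding $p^r\sigma^1$ to both sides) to $\sigma \leq \mu$, which holds because non-vanishing of $\bar g_i^\sigma(T_r(\mu))$ forces $\sigma$ to occur in the $\Delta$-filtration of $T_r(\mu)$ and hence to be a weight of $T_r(\mu)$.

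The upper inequality $\mu' \leq \widetilde{\sigma^0}$ is the main technical step and, I expect, the principal obstacle: since the map $\lambda \mapsto \tilde\lambda$ is not order-preserving on $X$, the bound does not follow formally from $\sigma \leq \mu$. By reciprocity~(\ref{tilt-reciprocity}) it is equivalent to the assertion that for $\sigma^0 \in X_r$, every composition factor $L_r(\eta)$ of $\nabla_r(\sigma^0)$ satisfies $\tilde\eta \leq \widetilde{\sigma^0}$. I plan to establish this by a weight count: the top constraint $\eta \leq \sigma^0$ combined with $\sigma^0 \in X_r$ (so $\sigma^0$ is bounded above by $(p^r-1)\rho$ componentwise) forces $p^r\eta^1$ into a narrow window via $p^r\eta^1 \leq \sigma^0 - \eta^0$, and combining this with the requirement that the lowest weight $w_0\eta^0 + p^r\eta^1$ of $L_r(\eta)$ lies in the weight set $\{\sigma^0 - \sum c_\beta\beta : 0 \leq c_\beta < p^r\}$ of $\nabla_r(\sigma^0)$ should pin $\eta^1$ down tightly enough to yield $p^r\eta^1 \leq w_0(\sigma^0 - \eta^0)$, which is exactly the required bound. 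With the range condition in hand, the original $\bar g_i^\sigma(T_r(\mu))$ is recovered from $\bar g_j^{\sigma^0}(T_r(\mu')) \in B_r$ by tensoring with $p^r\sigma^1$; combining this with the cellular decomposition, direct sums and duality then yields every morphism in $\P_r$ as required.
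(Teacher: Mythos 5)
Your overall strategy is the paper's: decompose objects into indecomposables, use cellularity to reduce to the single generators $\bar g_i^\sigma(T_r(\mu))$, shift the source weight into $X_r$ by tensoring with $p^rX$, and then verify the range condition $\sigma^0 \leq \mu' \leq \widetilde{\sigma^0}$ that defines $B_r$. The object-level part, the reduction to the $\bar g$'s, and the lower bound $\sigma^0 \leq \mu'$ are all fine.

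The gap is in the upper bound, which you correctly single out as the crux but leave as a plan. The assertion you intend to prove by a weight count --- that for $\sigma^0 \in X_r$ every composition factor $L_r(\eta)$ of $\nabla_r(\sigma^0)$ satisfies $\tilde\eta \leq \widetilde{\sigma^0}$ --- is false, so no counting argument can establish it. Take $G = SL_2$, $p = 3$, $r = 2$, $\sigma^0 = 6$. Then $\nabla_2(6)$ has composition factors $L_2(6)$, $L_2(4)$, $L_2(-8)$ (of dimensions $3+4+2 = 9 = p^2$), and for $\eta = 4$ one has $\tilde\eta = 12 > 10 = \tilde 6$. Equivalently, by (\ref{tilt-reciprocity}), $(T_2(12):\Delta_2(6)) = 1$ although $12 > \tilde 6$; this is precisely one of the $\Delta_2$-factors recorded in the paper's Section 6.3, where the generator $u'_i = \bar g^{\lambda_i}(P_{2p-i})$ has source weight $\lambda_i = ip \in X_2$ but target weight $\lambda_{2p-i} = 2p^2 - ip$, exceeding $\widetilde{\lambda_i} = 2p^2 - ip - 2$. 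In your notation the inequality $p^r\eta^1 \leq w_0(\sigma^0 - \eta^0)$ you hope to extract reads $0 \leq -2$ in this example. For comparison: the paper's own proof disposes of this step by invoking Proposition \ref{hom-bounds}, but that proposition bounds the weight $\eta$ with $\tilde\eta = \mu$ (the socle parameter of the target), not the highest weight $\mu$ itself, and since $\lambda \mapsto \tilde\lambda$ is not order-preserving this does not yield $\mu \leq \tilde\lambda$. So the obstacle you ran into is genuine, and it sits in the range condition defining $B_r$ rather than in your technique: the bound that does hold here is the one in part (3) of the remark following Proposition \ref{hom-bounds}, namely $\mu' \leq \sigma^0 + 2(p^r-1)\rho$, and replacing $\mu \leq \tilde\lambda$ by that (still finite, up to $p^rX$-shifts) condition is what your argument actually proves.
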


\begin{proof} Just like in the proof of Theorem \ref{generators for G} we see that we are done if we  check that every cellular basis element $c^\lambda_{ij}(T_r(\nu), T_r(\mu))$ is generated by $B_r$ (in this case it means: obtained from $B_r$ by taking direct sums, duals and compositions, and by tensoring with elements of $p^rX$) for all $\lambda, \nu, \mu \in X$. Furthermore, since  $c^\lambda_{ij}(T_r(\nu), T_r(\mu)) = \bar g_i^\lambda(T_r(\mu)) \circ  \bar f_j^\lambda (T_r(\nu))$ we are done if we check that all $\bar g_i^\lambda(T_r(\mu))$  belong to the set generated by $B_r$. Note that here $\mu \geq \lambda$. By tensoring with an appropriate element of $p^rX$ we may assume $\lambda \in X_r$. Finally, Proposition \ref{hom-bounds} ensures that $K(T_r(\mu), \lambda)$ is empty unless $\mu \leq \tilde \lambda$.
\end{proof}

\subsection{The strong linkage principle in $ \C_r$ \and $\T_r$}\label{SLP}
If $\alpha \in R$ then the reflection $s_\alpha$ acts on $X$ by $s_\alpha \lambda = \lambda - \langle \lambda, \alpha^\vee \rangle \alpha, \lambda \in X$. For each $m \in \Z$ we have a corresponding affine reflection $s_{\alpha, m}$ given by $s_{\alpha, m} \lambda = s_\alpha \lambda + mp\alpha$. The affine Weyl group $W_p$ is the group generated by these affine reflections for $\alpha \in R, m \in \Z$. The dot action of $W_p$ on $X$ is then the above action shifted by $-\rho$, i.e. $w \cdot \lambda = w(\lambda + \rho) - \rho, w \in W_p, \lambda \in X$.  

Let $\mu, \lambda \in X$. We say that $\mu$ is strongly linked to $\lambda$  (this is the order relation denoted $\uparrow$ in \cite{RAG}, II.6.4)  if there exists a sequence $\mu = \mu_0 \leq \mu_1 \leq \cdots  \leq \mu_r = \lambda$ in $X$ with $\mu_{i+1}$ obtained (via the dot action) from $\mu_i$ by an affine reflection in $W_p$. Note that strongly linked weights are in particular linked in the sense that they belong to the same orbit of $W_p$ under the dot action.

The strong linkage principle in $\C_r$ may be formulated as follows, see \cite{RAG}, II.9.15.
\begin{equation} \label{linkage}
\text {Let } \lambda, \mu \in X. \text { If } [\Delta_r(\lambda): L_r(\mu)] \neq 0 \text { then $\mu$ is strongly linked to $\lambda$}.
\end{equation}
Here we can of course replace $\Delta_r$ by $\nabla_r$.

When we combine (\ref{linkage}) with (\ref{tilt-reciprocity}) we get (using notation as in Section \ref{P_r})
\begin{equation} \label{bounds on Delta-factors}
\text {Let  $\lambda, \nu \in X$. If $(T_r(\tilde \lambda):\Delta_r(\nu)) \neq 0$ then $\lambda$ is strongly linked to $\nu$.}
\end{equation}
We immediately get.
\begin{prop}
Let $\lambda, \mu \in X$. If $\Hom_{G_rT}(T_r(\tilde \lambda), T_r(\tilde \mu)) \neq 0$ then $\mu \in W_p \cdot \lambda$.
\end{prop}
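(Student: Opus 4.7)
The plan is to combine the cellular description of $\Hom$-spaces in $\P_r$ (as used in the proof of Proposition \ref{hom-bounds}) with the strong linkage consequence (\ref{bounds on Delta-factors}). The argument should be essentially immediate, mimicking closely the structure of Proposition \ref{hom-bounds}.

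First I would invoke the key step that was already established in the proof of Proposition \ref{hom-bounds}: by the cellular basis of Theorem \ref{cellularity for P_r}, the space $\Hom_{G_rT}(T_r(\tilde\lambda), T_r(\tilde\mu))$ is spanned by composites $\bar g_i^\nu(T_r(\tilde\mu)) \circ \bar f_j^\nu(T_r(\tilde\lambda))$ indexed by $\nu \in X$ and pairs $(i,j) \in K(T_r(\tilde\lambda),\nu) \times K(T_r(\tilde\mu),\nu)$. Consequently, non-vanishing of the $\Hom$-space forces the existence of some $\nu \in X$ such that both
\[
(T_r(\tilde\lambda):\Delta_r(\nu)) \neq 0 \quad \text{and} \quad (T_r(\tilde\mu):\Delta_r(\nu)) \neq 0.
\]

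Next I would apply (\ref{bounds on Delta-factors}) to each of these two non-vanishing multiplicities: this yields that both $\lambda$ and $\mu$ are strongly linked to $\nu$. By the discussion at the beginning of Subsection \ref{SLP}, strongly linked weights lie in a common $W_p$-orbit under the dot action. Hence $\lambda, \mu \in W_p \cdot \nu$, and therefore $\mu \in W_p \cdot \lambda$, as required.

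There is no real obstacle here; the proposition is a direct corollary of two results already in hand. The only point that might deserve a sentence of emphasis is the reminder that strong linkage $\uparrow$ refines ordinary linkage, which is why passing through the intermediate weight $\nu$ yields $\mu \in W_p \cdot \lambda$ rather than merely a weaker partial-order relation between $\lambda$ and $\mu$.
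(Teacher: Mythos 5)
Your argument is correct and is precisely the one the paper intends when it says the proposition follows "immediately" from (\ref{bounds on Delta-factors}): a nonzero $\Hom$-space forces a common $\Delta_r(\nu)$-factor, and strong linkage of both $\lambda$ and $\mu$ to $\nu$ places them in the same $W_p$-dot-orbit. Nothing is missing.
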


\begin{rem}
Note that $\tilde \lambda = w_0 \cdot \lambda + p^r(\lambda^1 -w_0 \cdot \lambda^1)$ so that since $\lambda^1 - w_0 \cdot \lambda^1 \in \Z R$ we have $\tilde \lambda \in W_p \cdot \lambda$. Hence we can replace $\tilde \lambda$ and $\tilde \mu$ by $\lambda$ and $\mu$ in this proposition.
\end{rem}

\subsection{The translation principle in $\C_r$ and $\P_r$}
It follows from the results in the previous subsection that all composition factors of an indecomposable tilting module $T_r(\lambda)$ have highest weights in $W_p \cdot \lambda$. This is more generally true (e.g. because tilting modules are the same as projective modules) for the composition factors of any indecomposable module in $\C_r$. In other words, since $\bar A$ is a fundamental domain for the action by $W_p$ on $X$ we can split any module $M \in C_r$ as
$$ M = \bigoplus_{\lambda \in \bar A} \pr_\lambda(M).$$
Here $\pr_\lambda(M)$ is the largest submodule of $M$ such that its composition factors all have highest weights in $W_p \cdot \lambda$. This allows us for each $\lambda, \mu \in \bar A$ to define translation functors $T_\mu^\lambda$ on $\C_r$ and $\P_r$, see \cite{RAG}, II.9.22.
We set $\C_r^\lambda = \pr_\lambda(\C_r)$.  If $\lambda, \mu \in A$ then $T_\mu^\lambda$ is an equivalence between $C_r^\mu$ and $\C_r^\lambda$. A special consequence of this is:

\begin{prop} 
Let $\lambda, \mu \in A$. Then $\Hom_{G_rT}(T_r(x \cdot \lambda), T_r(y \cdot \lambda)) \simeq \Hom_{G_rT}(T_r(x \cdot \mu), T_r(y \cdot \mu))$ for all $x, y \in W_p$.
\end{prop}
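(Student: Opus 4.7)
The plan is to deduce the claim directly from the translation equivalence $T_\mu^\lambda \colon \C_r^\mu \to \C_r^\lambda$ invoked in the paragraph preceding the statement. Since $\lambda, \mu \in A$ are both regular, $T_\mu^\lambda$ is an equivalence of categories, so it automatically induces $k$-linear isomorphisms
$$\Hom_{G_rT}(M, N) \xrightarrow{\sim} \Hom_{G_rT}(T_\mu^\lambda M, T_\mu^\lambda N)$$
for all $M, N \in \C_r^\mu$. The whole problem is therefore to identify the image of each indecomposable tilting module $T_r(x \cdot \mu)$ under $T_\mu^\lambda$.

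First I would recall that translation functors are exact and send standard (resp.\ costandard) modules in the block of $\mu$ to standard (resp.\ costandard) modules in the block of $\lambda$; hence they preserve the property of having a $\Delta_r$- and a $\nabla_r$-filtration, and so restrict to a functor $\P_r \cap \C_r^\mu \to \P_r \cap \C_r^\lambda$. Being an equivalence, $T_\mu^\lambda$ moreover preserves indecomposability and so permutes the indecomposable tilting modules in these two blocks.

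Next I would invoke the precise form of the translation principle for $G_rT$ (see \cite{RAG}, II.9.22 together with II.7.6), which says that for $\lambda, \mu \in A$ one has $T_\mu^\lambda \Delta_r(x \cdot \mu) \simeq \Delta_r(x \cdot \lambda)$ for every $x \in W_p$, and similarly for $\nabla_r$. Now $T_\mu^\lambda T_r(x \cdot \mu)$ is an indecomposable tilting module in $\C_r^\lambda$, so it is of the form $T_r(z \cdot \lambda)$ for a unique $z \in W_p$ (modulo the stabilizer of $\lambda$, which is trivial). Comparing $\Delta_r$-multiplicities via the translated filtration, the highest weight of $T_\mu^\lambda T_r(x \cdot \mu)$ is $x \cdot \lambda$, forcing $T_\mu^\lambda T_r(x \cdot \mu) \simeq T_r(x \cdot \lambda)$. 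Applied to both factors, this yields the desired isomorphism of Hom spaces.

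The main obstacle is the verification that $T_\mu^\lambda$ matches $T_r(x \cdot \mu)$ with $T_r(x \cdot \lambda)$ on the nose, i.e.\ with the same $x \in W_p$. This is essentially the content of the translation principle in the $G_rT$-setting and is reduced to the standard/costandard case above; once one knows the $\Delta_r$-filtration multiplicities are preserved (as the Weyl group element attached to a standard quotient is transported by $T_\mu^\lambda$), the identification of highest weights follows from Lemma \ref{bound} applied on both sides. Everything else is a formal consequence of having an equivalence of categories.
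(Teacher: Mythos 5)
Your argument is correct and follows exactly the route the paper intends: the paper states this proposition as an immediate consequence of the fact that $T_\mu^\lambda$ is an equivalence between $\C_r^\mu$ and $\C_r^\lambda$, and your proposal simply fills in the (implicit) verification that this equivalence matches $T_r(x\cdot\mu)$ with $T_r(x\cdot\lambda)$ via preservation of $\Delta_r$-filtrations, indecomposability, and highest weights. No gaps.
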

Note that $A = \emptyset$ if $p$ is less than the Coxeter number for $R$. Hence this proposition is empty for small primes.

Another well known result (see \cite{RAG} II.11.10(1)) is.
\begin{prop}\label{translate st}
 Let $\lambda \in \bar A$ and $\mu \in X$. Then $T_{-\rho}^\lambda  T_r(-\rho + p^r\mu)\simeq  T_r(\lambda + p^r\mu)$.
\end{prop}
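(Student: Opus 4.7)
The plan is two-fold: first reduce to the case $\mu = 0$ using the $p^r X$-twist symmetry, then identify $T_{-\rho}^\lambda T_r(-\rho)$ as the indecomposable tilting module $T_r(\lambda)$ by a highest-weight argument combined with a character match.

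For the reduction, the identity $T_r(\nu + p^r\eta) \simeq T_r(\nu) \otimes p^r\eta$ recalled in Section \ref{weight bounds} gives $T_r(-\rho + p^r\mu) \simeq T_r(-\rho) \otimes p^r\mu$ and $T_r(\lambda + p^r\mu) \simeq T_r(\lambda) \otimes p^r\mu$. Since $p^r\mu$ is a one-dimensional $G_rT$-module trivial on $G_r$, the functor $(-) \otimes p^r\mu$ commutes with the $G$-module tensor factor in the definition of $T_{-\rho}^\lambda$ and is compatible with the block decomposition (modulo a relabeling of block indices by $p^r\mu$). Consequently $T_{-\rho}^\lambda((-) \otimes p^r\mu) \simeq T_{-\rho}^\lambda(-) \otimes p^r\mu$, and the claim reduces to $T_{-\rho}^\lambda T_r(-\rho) \simeq T_r(\lambda)$.

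For the main step, Example \ref{example2} applied to the special weight $-\rho \in -\rho + p^rX$, together with the formula in Subsection \ref{P_r}, gives $T_r(-\rho) = St_r \otimes (-p^r\rho) = L_r(-\rho)$, which is simple with character $\chi((p^r-1)\rho) e^{-p^r\rho}$. Now $T_{-\rho}^\lambda T_r(-\rho) = \pr_\lambda(L(\lambda + \rho) \otimes T_r(-\rho))$ is a tilting module: tensoring a $\Delta_r$- or $\nabla_r$-filtered module with a finite-dimensional $G$-module preserves these filtrations (the factors being shifted by the weights of the $G$-module), and $\pr_\lambda$ is a direct-summand projection. Its maximal weight is $\lambda = (\lambda + \rho) + (-\rho)$, occurring with multiplicity one as the product of the two one-dimensional highest-weight spaces $L(\lambda + \rho)_{\lambda + \rho}$ and $T_r(-\rho)_{-\rho}$. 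Since $\lambda \in \bar A$, this weight lies in $W_p \cdot \lambda$ and hence survives $\pr_\lambda$, and no other weight in $W_p \cdot \lambda$ exceeds $\lambda$. By the $G_rT$-analogue of Proposition \ref{split}, a non-zero map on the $\lambda$-weight space yields a split inclusion $T_r(\lambda) \hookrightarrow T_{-\rho}^\lambda T_r(-\rho)$.

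To rule out further summands, I would match characters: $\Cha T_{-\rho}^\lambda T_r(-\rho)$ equals the $W_p \cdot \lambda$-weight part of $\Cha L(\lambda + \rho) \cdot \chi((p^r-1)\rho) e^{-p^r\rho}$, and by the standard translation principle between the maximally singular block of $-\rho$ and the (possibly semi-regular) block of $\lambda$ this sum equals $\Cha T_r(\lambda)$. The main obstacle is precisely this character identity: when $\lambda$ lies on a wall of $\bar A$, the $\Delta_r$-filtration of $T_r(\lambda)$ involves nontrivial $\Delta_r$-factors beyond $\Delta_r(\lambda)$ itself, and verifying the match requires the full translation-functor bookkeeping of Jantzen, RAG II.9.22 and II.11.10.
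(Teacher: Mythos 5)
The paper offers no proof of this proposition: it is quoted directly from \cite{RAG}, II.11.10(1), and is later subsumed under Proposition \ref{translated tiltings}, whose part (1) in turn cites the infinitesimal analogue of \cite{A00}, Proposition 5.2. So your attempt must be judged on its own merits. Your reduction to $\mu=0$ and your identification of $T_r(\lambda)$ as a direct summand of $T_{-\rho}^\lambda T_r(-\rho)$ are sound, granting the convention of \cite{RAG}, II.9.22 that the linkage classes used to define $\pr$ on $\C_r$ are the classes $W_p\cdot\lambda + p^rX$ (without the $p^rX$-shift, $\pr_{-\rho}$ would annihilate $T_r(-\rho+p^r\mu)$ for general $\mu$, since $p^r\mu$ need not lie in $p\Z R$). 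Indeed $T_r(-\rho)=\Delta_r(-\rho)=\nabla_r(-\rho)$, tensoring with $L(\lambda+\rho)$ preserves $\Delta_r$- and $\nabla_r$-filtrations, $\lambda$ is the unique highest weight and has multiplicity one, and the $G_rT$-analogue of Proposition \ref{split} splits off a copy of $T_r(\lambda)$.

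The genuine gap is the step you yourself flag as ``the main obstacle'': ruling out further summands. The character identity you propose to invoke is not an independent input. Knowing $\Cha\, T_r(\lambda)$ for $\lambda\in\bar A$ --- equivalently its $\Delta_r$-multiplicities, which for singular $\lambda$ involve the whole orbit $-\rho+W(\lambda+\rho)$ --- is precisely the content of Corollary \ref{tilting above st}, which the paper \emph{deduces from} the present proposition; appealing to ``the standard translation principle'' for this equality is appealing to the statement being proved. What is actually needed is an indecomposability argument, and the standard one is short: $T_r(-\rho)=Q_r(-\rho)$ is projective, translation functors are exact with exact biadjoints and hence preserve projectives, so $T_{-\rho}^\lambda T_r(-\rho)$ is projective; by adjunction $\Hom_{G_rT}(T_{-\rho}^\lambda Q_r(-\rho), L_r(\nu)) \simeq \Hom_{G_rT}(Q_r(-\rho), T_\lambda^{-\rho}L_r(\nu))$, and since $T_\lambda^{-\rho}L_r(\nu)$ is either zero or simple, and is isomorphic to $L_r(-\rho)$ for exactly one $\nu$ in the class of $\lambda$, the head of $T_{-\rho}^\lambda Q_r(-\rho)$ is simple. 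An indecomposable projective, hence indecomposable tilting, module with highest weight $\lambda$ must be $T_r(\lambda)$. (Alternatively one can count the $\Delta_r$-factors of $T_{-\rho}^\lambda\Delta_r(-\rho)$ directly from \cite{RAG}, II.9.22, but one still needs indecomposability to conclude, so the projectivity argument cannot be bypassed by character bookkeeping alone.)
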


Note that by (\ref{St})  we have that $T_r(-\rho + p^r \mu) = \Delta_r(-\rho + p^r \mu)$. From the well known behavior of translation functors on standard modules in $C_r$ we get
\begin{cor}\label{tilting above st}  Let $\lambda \in \bar A$ and $\mu \in X$. Then 
$$(T_r(\lambda + p^r \mu) : \Delta_r(\nu)) = \begin{cases} {1 \text { if } \nu \in - \rho + p^r\mu + W\lambda,} \\ { 0 \text { otherwise. }} \end{cases}$$
\end{cor}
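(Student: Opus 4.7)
The plan is to combine Proposition \ref{translate st} with the classical computation of a translation functor on a baby Verma module.

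First, since $-\rho+p^r\mu$ is a special weight, the identity (\ref{St}) gives $T_r(-\rho+p^r\mu)=\Delta_r(-\rho+p^r\mu)$ (cf.\ Example \ref{example2}). Feeding this into Proposition \ref{translate st} yields
$$T_r(\lambda+p^r\mu)\;\simeq\;T_{-\rho}^{\lambda}\,\Delta_r(-\rho+p^r\mu),$$
so it suffices to compute the $\Delta_r$-filtration multiplicities of the right hand side.

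Second, I would invoke the standard description of $T_{-\rho}^{\lambda}$. Since $\lambda\in\bar A$, the weight $\lambda+\rho$ lies in $X^+$ and is the unique dominant weight of the $W$-orbit of $\lambda-(-\rho)$, so $T_{-\rho}^{\lambda}(-)=\pr_{\lambda}(-\otimes L(\lambda+\rho))$. By the standard tensor identity for baby Verma modules (see \cite{RAG}, II.9.22), the module $\Delta_r(-\rho+p^r\mu)\otimes L(\lambda+\rho)$ carries a $\Delta_r$-filtration whose sections are $\Delta_r(-\rho+p^r\mu+\gamma)$, one for each weight $\gamma$ of $L(\lambda+\rho)$ counted with multiplicity $\dim L(\lambda+\rho)_{\gamma}$.

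Third, I would apply $\pr_{\lambda}$. The central point is that, because $\lambda\in\bar A$, exactly the extremal weights $\gamma\in W(\lambda+\rho)$ of $L(\lambda+\rho)$ (each of multiplicity one) survive: for such $\gamma=w(\lambda+\rho)$ the highest weight of the corresponding section equals
$$-\rho+p^r\mu+w(\lambda+\rho)\;=\;w\cdot\lambda+p^r\mu,$$
which lies in the $W_p$-dot-orbit of $\lambda+p^r\mu$ and is therefore retained by $\pr_{\lambda}$; while any non-extremal weight of $L(\lambda+\rho)$ produces a section whose highest weight lies in a strictly smaller (more singular) $W_p$-dot-orbit and is killed by $\pr_{\lambda}$. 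This matches the claim, since $-\rho+p^r\mu+W(\lambda+\rho)=p^r\mu+W\cdot\lambda$.

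The main obstacle is the last point: verifying that the non-extremal weights of $L(\lambda+\rho)$ do not contribute to the $\lambda$-block after projection. Here the alcove hypothesis $\lambda\in\bar A$ is essential, ensuring that $\lambda+\rho$ lies in the closure of the fundamental $\rho$-shifted alcove and that the shifts $-\rho+p^r\mu+\gamma$ arising from non-extremal $\gamma$ are strongly linked (cf.\ Subsection \ref{SLP}) to weights of a deeper facet, and hence belong to blocks different from that of $\lambda+p^r\mu$. Combined with the first two steps this yields the desired filtration of $T_r(\lambda+p^r\mu)$ and hence the corollary.
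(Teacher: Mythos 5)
Your overall route is the same as the paper's: reduce via Example \ref{example2} and Proposition \ref{translate st} to computing $T_{-\rho}^{\lambda}\Delta_r(-\rho+p^r\mu)$, then evaluate the translation functor by tensoring with $L(\lambda+\rho)$ and applying $\pr_\lambda$. The paper compresses the second half into the phrase ``the well known behavior of translation functors on standard modules'', so your proposal is essentially an unwinding of the intended argument; the reduction and the tensor-identity step (a $\Delta_r$-filtration of $\Delta_r(-\rho+p^r\mu)\otimes L(\lambda+\rho)$ with sections indexed by the weights of $L(\lambda+\rho)$) are correct.

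The justification you give for the crucial last step is, however, not right as stated. You argue that a non-extremal weight $\gamma$ of $L(\lambda+\rho)$ yields a section whose highest weight lies in a ``more singular'' orbit or ``deeper facet'' and is therefore killed by $\pr_\lambda$. That is false in general: for $G=SL_2$, $p=5$, $\lambda=3\in A$, the non-extremal weight $\gamma=2$ of $L(4)=St_1$ gives $-\rho+p^r\mu+\gamma$ differing from $1+p^r\mu$, a $p$-regular weight lying in the regular orbit $W_p\cdot(1+p^r\mu)$ --- not a more singular facet; it is discarded only because that orbit is different from $W_p\cdot(3+p^r\mu)$. The statement you actually need is \cite{RAG}, II.7.7 (in its $G_rT$ form): if $\gamma$ is a weight of $L(\lambda+\rho)$ with $-\rho+\gamma\in W_p\cdot\lambda$, then $-\rho+\gamma=w\cdot\lambda$ for some $w$ in the dot-stabilizer of $-\rho$ in $W_p$, which is the finite Weyl group $W$; hence $\gamma=w(\lambda+\rho)$ is extremal, and each extremal weight has multiplicity one. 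With that substitution your argument closes. One further bookkeeping point: your computation produces multiplicity one exactly for $\nu\in p^r\mu+W\cdot\lambda=-\rho+p^r\mu+W(\lambda+\rho)$, which is the correct answer (compare the $\Delta_1$-factors $\Delta_1(0),\Delta_1(-2)$ of $T_1(0)$ in Subsection \ref{r=1}); this is not literally the set $-\rho+p^r\mu+W\lambda$ appearing in the statement, so you should flag the $\rho$-shift inside the $W$-orbit rather than assert that the two sets coincide.
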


In fact, Proposition \ref{translate st} and Corollary \ref{tilting above st} are special cases of the following more general result.
\begin{prop}\label{translated tiltings}
Let $\lambda \in A$ and $\mu \in \bar A$. Suppose $w$ is an element of $W_p$ for which $w\cdot \lambda$ is maximal in the set $\{wx\cdot \lambda | x\cdot \mu = \mu\}$. Then
\begin{enumerate}
\item $T_\mu^\lambda T_r(w\cdot \mu) \simeq T_r(\lambda)$.
\item $(T_r(w\cdot \lambda):\Delta_r(y\cdot \lambda)) = (T_r(w\cdot \mu):\Delta_r(y\cdot \mu))$ for $y \in W_p$.
\end{enumerate}
\end{prop}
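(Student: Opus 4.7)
My plan is to prove both statements in tandem via translation functor technology in $\C_r$, generalizing the special case Proposition \ref{translate st}. The translation functors $T_\mu^\lambda \colon \C_r^\mu \to \C_r^\lambda$ and $T_\lambda^\mu \colon \C_r^\lambda \to \C_r^\mu$ are exact, biadjoint, and preserve $\Delta_r$- and $\nabla_r$-filtrations (see \cite{RAG}, II.9.22). Consequently $M := T_\mu^\lambda T_r(w \cdot \mu)$ is tilting in $\C_r^\lambda$. The crucial standard input, which uses that $\lambda$ is regular, is that $T_\lambda^\mu \Delta_r(y \cdot \lambda) \simeq \Delta_r(y \cdot \mu)$ and $T_\lambda^\mu \nabla_r(y \cdot \lambda) \simeq \nabla_r(y \cdot \mu)$ for every $y \in W_p$.

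Given this setup, I would first compute the $\Delta_r$-multiplicities of $M$ by adjunction. Using the tiltingness of $M$ and $T_r(w \cdot \mu)$ together with the $\C_r$-analogue of (\ref{nabla-mult}), one obtains
\begin{align*}
(M : \Delta_r(y \cdot \lambda)) &= \dim \Hom_{G_rT}(\Delta_r(y \cdot \lambda), M) \\
&= \dim \Hom_{G_rT}(\Delta_r(y \cdot \mu), T_r(w \cdot \mu)) = (T_r(w \cdot \mu) : \Delta_r(y \cdot \mu)).
\end{align*}
This already establishes (2) once (1) is in hand; in particular $(M : \Delta_r(w \cdot \lambda)) = 1$.

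To finish (1), I would identify $M$ via highest-weight bookkeeping. The displayed formula shows that the $y$ with $(M : \Delta_r(y \cdot \lambda)) \neq 0$ are exactly those for which $\Delta_r(y \cdot \mu)$ is a $\Delta_r$-factor of $T_r(w \cdot \mu)$, and these split into two types. If $y \in wW_\mu$ then $y \cdot \mu = w \cdot \mu$, and the maximality hypothesis directly gives $y \cdot \lambda \leq w \cdot \lambda$. If $y \notin wW_\mu$ then $y \cdot \mu < w \cdot \mu$ strictly (by Lemma \ref{bound}), and combining the strong linkage principle from Subsection \ref{SLP} with the combinatorics of the affine Weyl group acting on alcoves, I would deduce $y \cdot \lambda < w \cdot \lambda$. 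Hence $w \cdot \lambda$ is a maximal weight of $M$ with $\Delta_r$-multiplicity one, so by the $\C_r$-analogue of Proposition \ref{split} the indecomposable tilting $T_r(w \cdot \lambda)$ splits off $M$ as a direct summand; a final multiplicity count rules out any further summands and yields $M \simeq T_r(w \cdot \lambda)$.

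The main obstacle will be the second case of the maximal-weight argument, namely proving that $y \cdot \mu < w \cdot \mu$ with $y \notin wW_\mu$ forces $y \cdot \lambda < w \cdot \lambda$. This is not a formal consequence of the dominance order, because moving from $\mu \in \bar A$ into the regular alcove $A$ can in principle ``rotate'' orderings between elements of distinct $W_\mu$-cosets. I expect the resolution to require a careful choice of minimal-length coset representatives for $W_p / W_\mu$ together with the regularity of $\lambda$; this is precisely the additional content beyond Proposition \ref{translate st}, where $\mu = -\rho$ is stabilized by all of $W$ and these subtleties collapse.
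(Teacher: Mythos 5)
Your argument for part (2) coincides with the paper's: the paper also deduces (2) from (1) by the adjunction computation $(T_r(w\cdot \lambda):\Delta_r(y\cdot \lambda)) = \dim_k \Hom_{G_rT}(\Delta_r(y\cdot \lambda), T_\mu^\lambda T_r(w\cdot \mu)) = \dim_k \Hom_{G_rT}(T_\lambda^\mu \Delta_r(y\cdot \lambda), T_r(w\cdot \mu)) = (T_r(w\cdot \mu):\Delta_r(y\cdot \mu))$, using $T_\lambda^\mu \Delta_r(y\cdot \lambda) \simeq \Delta_r(y\cdot \mu)$. Where you diverge is part (1): the paper does not prove it at all, but cites it as the infinitesimal analogue of \cite{A00}, Proposition 5.2, whereas you attempt a direct proof. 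That is a legitimate thing to try, but as written it has two genuine gaps.

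First, the step you flag yourself --- that every $y$ with $(T_r(w\cdot\mu):\Delta_r(y\cdot\mu))\neq 0$ and $y\notin wW_\mu$ (where $W_\mu$ denotes the stabiliser $\{x : x\cdot\mu=\mu\}$) satisfies $y\cdot\lambda < w\cdot\lambda$ --- is left as an expectation rather than an argument. Since this is precisely where the maximality hypothesis on $w\cdot\lambda$ and the regularity of $\lambda$ must enter, the proof of (1) is not complete without it; identifying the difficulty is not the same as resolving it. Second, even granting that step, the closing sentence ``a final multiplicity count rules out any further summands'' is circular. Proposition \ref{split} gives $M \simeq T_r(w\cdot\lambda)\oplus M'$, and $(M:\Delta_r(w\cdot\lambda))=1$ only excludes $T_r(w\cdot\lambda)$ itself from $M'$; it does not exclude summands $T_r(\nu)$ with $\nu < w\cdot\lambda$, because at this stage you do not know the $\Delta_r$-multiplicities of $T_r(w\cdot\lambda)$ --- computing them is exactly the content of (2), which presupposes (1). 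The standard way to close this is to prove indecomposability of $M=T_\mu^\lambda T_r(w\cdot\mu)$ directly, for instance by analysing $\End_{G_rT}(M)\simeq \Hom_{G_rT}(T_r(w\cdot\mu), T_\lambda^\mu T_\mu^\lambda T_r(w\cdot\mu))$ via the decomposition of $T_\lambda^\mu T_\mu^\lambda$ on the $\mu$-block, rather than by counting $\Delta_r$-factors; this is the substance of the result the paper imports from \cite{A00}.
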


\begin{proof}
$(1)$ is the infinitesimal analogue of \cite{A00}, Proposition 5.2. Then we get (2) from the adjointness of $T_\mu^\lambda$ and $T_\lambda^\mu$ as follows:  $(T_r(w\cdot \lambda):\Delta_r(y\cdot \lambda)) = \dim_k \Hom_{G_rT}(\Delta_r(y\cdot \lambda), T_r(w\cdot \lambda)) = (T_r(w\cdot \mu):T_\lambda^\mu \Delta_r(y\cdot \lambda)) = (T_r(w\cdot \mu): \Delta_r(y\cdot \mu))$ because $T_\lambda^\mu \Delta_r(y\cdot \lambda) \simeq \Delta_r(y\cdot \mu)$.
\end{proof}

Using Proposition \ref{translated tiltings} we can now strengthen
 the conditions in (\ref{bounds on Delta-factors}).

 Let $\lambda \in X$ be $p$-regular, i.e. $\lambda \in W_p \cdot A$. If $A'$ is an arbitrary alcove, then we denote by $\lambda_{A'}$ the unique element in $A' \cap W_p \cdot \lambda$. Then we have. 
\begin{prop}
If $(T_r(\tilde \lambda):\Delta_r(\nu)) \neq 0$ for some $\nu \in X$ then $\lambda$ is strongly linked to $\nu$ and $\nu$ is strongly linked to $\tilde \lambda$.
\end{prop}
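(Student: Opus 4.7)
The first assertion, that $\lambda$ is strongly linked to $\nu$, is exactly the content of \eqref{bounds on Delta-factors}, so only the second assertion that $\nu$ is strongly linked to $\tilde\lambda$ requires fresh input.

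My plan is to combine three ingredients. From the first assertion together with the Remark preceding this proposition, $\lambda$, $\nu$, and $\tilde\lambda$ all lie in the common $W_p$-dot orbit $W_p\cdot\lambda$, which is $p$-regular by hypothesis on $\lambda$. Lemma \ref{bound} provides the dominance inequality $\nu\le\tilde\lambda$. Finally I would invoke the standard fact that on any $p$-regular $W_p$-dot orbit the dominance order $\le$ on $X$ coincides with the strong linkage order $\uparrow$: under the bijection $W_p \to W_p\cdot\lambda_A$, $w\mapsto w\cdot\lambda_A$ with $\lambda_A \in A$, both orders correspond (up to reversal) to the Bruhat order on $W_p$, hence to each other. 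Applying this equivalence to $\nu\le\tilde\lambda$ yields $\nu\uparrow\tilde\lambda$, finishing the proof.

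The main obstacle is the third ingredient. The equivalence of $\le$ and $\uparrow$ on regular $W_p$-orbits is not established anywhere earlier in the excerpt; it is classical affine Weyl group combinatorics (compare \cite{RAG}, II.6). A more self-contained route would be to apply Proposition \ref{translated tiltings}(2) to transport the multiplicity $(T_r(\tilde\lambda):\Delta_r(\nu))$ to the corresponding multiplicity in the block of a fixed regular weight $\lambda_A\in A$, reducing the statement to a purely alcove-combinatorial one inside a single regular orbit; but any such route still ultimately rests on the same underlying affine Weyl group fact about regular orbits.
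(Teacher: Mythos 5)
Your first step is fine and agrees with the paper: $\lambda\uparrow\nu$ is literally \eqref{bounds on Delta-factors}, so everything hinges on the second assertion. There, however, your route diverges completely from the paper's, and it has a genuine gap. The paper proves the general claim ``$(T_r(\lambda):\Delta_r(\nu))\neq 0\Rightarrow\nu\uparrow\lambda$'' by induction over the alcoves in $X_r$: the base case is Corollary \ref{tilting above st}, and the inductive step uses Proposition \ref{translated tiltings} to realize $T_r(\lambda)$ as a summand of $T^\lambda_\mu T^\mu_\lambda T_r(\lambda_{A''})$ for a lower neighbouring alcove $A''$, so that every $\Delta_r$-factor of $T_r(\lambda)$ comes from a $\Delta_r$-factor of $T_r(\lambda_{A''})$ either directly or after one wall reflection; strong linkage then propagates one alcove at a time. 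This stays entirely inside the representation theory already developed in Section 5.

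Your argument instead reduces everything to the assertion that on a $p$-regular $W_p$-orbit the dominance order $\le$ coincides with the strong linkage order $\uparrow$, and the justification you offer for it --- that both orders correspond, via $w\mapsto w\cdot\lambda_A$, to the Bruhat order on $W_p$ --- does not hold up. For the dot action of an \emph{affine} Weyl group the analogue of the BGG ordering theorem is exactly what is at stake: the claim that $w\cdot\lambda_A\le w'\cdot\lambda_A$ is equivalent to a Bruhat (or length) comparison of $w$ and $w'$ is essentially a restatement of the fact you are trying to prove, not an independent input, and the identification of $\uparrow$ with the Bruhat order on alcoves also requires proof. The coincidence of $\le$ and $\uparrow$ on $W_p$-orbits is a genuinely non-trivial combinatorial theorem with its own literature; it is nowhere established in this paper, and combining Lemma \ref{bound} with Proposition \ref{translated tiltings}(2), as you suggest in your fallback, only relocates the problem to a single regular orbit without resolving it. So as written the key step is unproved; to repair the argument you must either cite that external theorem explicitly or replace the step by the paper's wall-crossing induction.
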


\begin{proof} We have left to check that if (for arbitrary $\lambda, \nu \in X$)  we have $(T_r(\lambda):\Delta_r(\nu)) \neq 0 $ then $\nu$ is strongly linked to $\lambda$. By tensoring with an appropriate element of $p^rX$ we may assume $\lambda \in X_r$. Denote by $A'$ the alcove containing $\lambda$. If $A' = A$ we are done by Corollary \ref{tilting above st}. So suppose $\lambda > \lambda_A$ and assume inductively that the statement holds for all weights of the form $\lambda_{A''}$ for which $A'' \subset X_r$ and $\lambda_{A''} < \lambda$. Choose now $A''$ to be an alcove in $X_r$ which shares a wall with $A'$ and satisfies $\lambda_{A''} < \lambda$. Let $\mu$ be a weight in the interior of the common wall of $A''$ and $A'$. By Proposition \ref{translated tiltings} we see that $T_r(\lambda)$ is a summand of $T_\mu^\lambda T_\lambda^\mu T_r(\lambda_{A''})$. Then $(T_r(\lambda):\Delta_r(\nu)) \leq (T_\mu^\lambda T_\lambda^\mu T_r(\lambda_{A''}):\Delta_r(\nu)) = (T_r(\lambda_{A''}):\Delta_r(\nu)) + (T_r(\lambda_{A''}): \Delta_r(\nu'))$ where $\nu' $ is the weight different from $\nu$ for which $\Delta_r(\nu'')$ is a $\Delta_r$-factor of $T_\mu^\lambda T_\lambda^\mu \Delta_r(\nu)$. The statement now follows from the induction hypothesis.
\end{proof}

Let $\P_r(0)$ denote the principal block in $\P_r$ corresponding to $0 \in A$, i.e. $\P_r(0)$ is the subcategory of $\P_r$ whose indecomposable modules are $(T_r(w \cdot 0))_{w \in W_p}$. Set $\Lambda(0) = W_p \cdot 0$ and let $\leq_{SL}$ denote the ordering on $\Lambda(0)$ given by $\lambda \leq_{SL} \mu$ iff $\lambda$ is strongly linked to $\mu$. Then we get from the above.
\begin{cor} \begin{enumerate}
\item $\P_r(0)$ is a cellular category with weight poset $(\Lambda(0), \leq_{SL})$.
 \item  $B_r(0) = \{\bar g_i^{x\cdot 0}(T_r(y\cdot 0)) | x, y \in W_p, x\cdot 0 \leq_{SL} y\cdot 0, \; i= 1, 2,
\cdots, (T_r(y\cdot 0):\Delta_r(x\cdot 0))\}$ generates the morphisms in $\P_r(0)$.
\end{enumerate}
\end{cor}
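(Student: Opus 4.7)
The plan is to derive both parts of the corollary from the cellularity of $\P_r$ established in Theorem \ref{cellularity for P_r}, the generators description in Theorem \ref{generators for G_r}, and the strengthened form of the strong linkage principle given in the immediately preceding proposition.

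For part (1), I would restrict the cellular datum built in the proof of Theorem \ref{cellularity for P_r} to the full additive subcategory $\P_r(0)$. The indecomposables of $\P_r(0)$ are exactly the $T_r(y \cdot 0)$, and the preceding proposition tells us that $K(T_r(y \cdot 0), \lambda) = \{1, \dots, (T_r(y \cdot 0) : \Delta_r(\lambda))\}$ is empty unless $\lambda \in W_p \cdot 0 = \Lambda(0)$ and $\lambda \leq_{SL} y \cdot 0$. Thus the weight index set naturally restricts to $\Lambda(0)$. The lifts $\bar g_i^{x \cdot 0}(T_r(y \cdot 0))$, the anti-involution $\dagger$ (which preserves blocks since it preserves characters), and the cellular basis elements $c^{x\cdot 0}_{ij}$ are inherited directly, so axioms C-I and C-II transfer verbatim from $\P_r$. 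For C-III, the error term produced in the proof of Theorem \ref{cellularity for P_r} lies in the span of cellular basis elements $c^\nu_{kl}$ with $\nu < \lambda$; for morphisms between objects of $\P_r(0)$ each such $\nu$ must be a $\Delta_r$-weight of some $T_r(y \cdot 0) \in \P_r(0)$, so by the preceding proposition $\nu$ is strongly linked to $y \cdot 0$ and hence, together with $\lambda \leq_{SL} y \cdot 0$, one obtains the refined bound $\nu <_{SL} \lambda$. Hence the cellular datum is compatible with the poset $(\Lambda(0), \leq_{SL})$.

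For part (2), I would mimic the proof of Theorem \ref{generators for G_r}, restricted to the principal block. By (1), any morphism in $\P_r(0)$ is a linear combination of cellular basis elements
\[
c^{x \cdot 0}_{ij}(T_r(\nu \cdot 0), T_r(\mu \cdot 0)) = \bar g_i^{x \cdot 0}(T_r(\mu \cdot 0)) \circ \bar f_j^{x \cdot 0}(T_r(\nu \cdot 0)),
\]
and each such element is the composition of a member of $B_r(0)$ with the $\dagger$-dual of another member of $B_r(0)$. The index bounds $x \cdot 0 \leq_{SL} y \cdot 0$ and $i \leq (T_r(y \cdot 0) : \Delta_r(x \cdot 0))$ in the definition of $B_r(0)$ are precisely the non-vanishing range for $K(T_r(y \cdot 0), x \cdot 0)$ dictated once more by the preceding proposition. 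In contrast to Theorem \ref{generators for G_r}, no tensoring with $p^r X$ is required, because $\Lambda(0) = W_p \cdot 0$ already exhausts the weights of the block.

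The main obstacle is the verification of C-III with respect to the refined order $\leq_{SL}$: one must check that in the block every error term actually lies in the $k$-span of $c^\nu_{kl}$ with $\nu <_{SL} \lambda$, not merely with $\nu < \lambda$. The delicate point is that, a priori, Theorem \ref{cellularity for P_r} only gives membership in the larger ideal defined by $<$; the refinement relies on combining the weight-space support of the error term with the strengthened strong linkage statement in the preceding proposition, so that each cellular basis element surviving in the expansion is indexed by a weight strongly linked to $\lambda$ from below.
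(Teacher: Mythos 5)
Your overall strategy coincides with what the paper intends: the corollary is stated with no separate argument (``we get from the above''), the point being exactly that the cellular datum of Theorem \ref{cellularity for P_r} and the generating set of Theorem \ref{generators for G_r} restrict to the block once the preceding proposition replaces the order $\leq$ by $\leq_{SL}$. Your treatment of C-I, C-II, and of part (2) is fine.

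However, your justification of the refined C-III condition contains a genuine logical gap. You argue: $\nu$ is a $\Delta_r$-weight of some $T_r(y\cdot 0)$, hence $\nu \leq_{SL} y\cdot 0$; combined with $\lambda \leq_{SL} y\cdot 0$ this gives $\nu <_{SL} \lambda$. That inference is invalid: $\leq_{SL}$ is only a partial order, and two weights both strongly linked to a common $y\cdot 0$ need not be comparable under $\leq_{SL}$, even if they are comparable under the ordinary order $\leq$ (the ordinary order on a $W_p$-orbit does not refine to strong linkage). So as written the step would fail. The correct argument applies the preceding proposition to $T_r(\lambda)$ itself rather than to the target object: the error term in C-III is of the form $h \circ \bar f_j^\lambda(P)$ where $h\colon T_r(\lambda) \to R$ vanishes on $\Delta_r(\lambda)$, hence factors through $T_r(\lambda)/\Delta_r(\lambda)$; the $\Delta_r$-factors of that quotient are exactly the $\Delta_r(\nu)$ with $(T_r(\lambda):\Delta_r(\nu)) \neq 0$ and $\nu \neq \lambda$, and for these the proposition gives $\nu$ strongly linked to $\lambda$, i.e.\ $\nu <_{SL} \lambda$. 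Feeding this into the usual ideal argument (as in \cite{AST}, Section 3) shows the error term lies in the span of the $c^\nu_{kl}$ with $\nu <_{SL} \lambda$, which is what the poset $(\Lambda(0), \leq_{SL})$ requires. With this local repair your proof is complete and matches the intended one.
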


\subsection{The Steinberg linkage class in $\C_r$ and $\P_r$}
In this subsection we shall carry over to $G_rT$ some of the results in \cite{A18}.  Consider the special weight $(p-1)\rho \in X$. As $s_\alpha \cdot (p-1)\rho = (p-1)\rho - p\alpha$ for all simple roots $\alpha$  we see that the linkage component in  $\C_r$ corresponding to $(p-1)\rho$ consists of all $M \in \C_r$ whose composition factors have highest weights in $(p-1)\rho + p\Z R$. For the purposes in this paper it will be convenient to consider the (possibly bigger) subcategory consisting of all $M \in \C_r$ with composition factors belonging to $\{L_r(\lambda) | \lambda + \rho \in pX\}$. We shall denote this component of $\C_r$ by $\St_r$, and by $\P\St_r$ the subcategory of $\P_r$ consisting of those $M \in \P_r$ which belong to $\St_r$.

If $r = 1$ then we have:
$$\St_1 \text { is a semisimple category with simple modules } L_1(-\rho + p \lambda), \; \lambda \in X.$$
Note that in fact $L_1(-\rho + p\lambda)\simeq St_1 \otimes (p (\lambda - \rho)) \simeq \Delta_r(-\rho + p \lambda) = T_1(-\rho + p \lambda)$, so that $\St_1$ is contained in $\P_1$, i.e. $\P\St_1 = \St_1$.

Suppose $r > 1$. The Frobenius homomorphism on $G$ restricts to a homomorphism $G_r \to G_{r-1}$. Via this homomorphism we can make any $M \in \C_{r-1}$ into a $G_r$-module. The resulting module in $\C_r$ is denoted $M^{(1)}$. We then define a functor 
$$\Phi_{r-1} : \C_{r-1} \to \C_r \text { by } \; \Phi_{r-1}(M) = St_1 \otimes M^{(1)}. $$

We observe that $\Phi_{r-1}$ takes values in $\St_r$. In fact, it gives an equivalence of categories 
\begin{equation} \label{equiv-St} \Phi_{r-1} : \C_{r-1} \rightarrow \St_r
\end{equation}
with inverse functor $\Hom_{G_1}(St_1, -)$, cf. \cite{A18}, Theorem 3.1.
This equivalence of categories carries simple modules to simple modules, (co)standard modules to (co)standard modules, and tilting modules to tilting modules. In particular, we see that the restriction of $\Phi_{r-1}$ to $\P_{r-1}$ gives an equivalence between $\P_{r-1}$ and the subcategory $\P\St_r$ of $\P_r$. 

Let us also record the following consequences of the above.
\begin{prop} Let $r > 1$ and take $\lambda \in X$. Then the functor $\Phi_{r-1}$ carries the tilting module $T_{r-1}(\lambda) \in \P_{r-1}$ to $T_r((p-1)\rho + p \lambda) \in \P_r$, and if also $\mu \in X$ we get an isomorphism of $k$-vector spaces
$$ \Hom_{G_{r-1}T}(T_{r-1}(\lambda), T_{r-1}(\mu)) \simeq \Hom_{G_rT}(T_r((p-1)\rho +p\lambda), T_r((p-1)\rho +p\mu)).$$
\end{prop}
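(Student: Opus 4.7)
The plan is to read off both assertions directly from the categorical equivalence $\Phi_{r-1}:\C_{r-1}\to \St_r$ recalled in (\ref{equiv-St}), using the preservation properties of this equivalence that were stated in the paragraph preceding the proposition.

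First I would pin down the highest weight of $\Phi_{r-1}(T_{r-1}(\lambda)) = St_1\otimes T_{r-1}(\lambda)^{(1)}$. The indecomposable tilting module $T_{r-1}(\lambda)$ has $\lambda$ as its (unique, one-dimensional) highest weight, with every other $T$-weight strictly below. Passing to the Frobenius twist multiplies all $T$-weights by $p$, as recorded in Subsection 4.1, so $T_{r-1}(\lambda)^{(1)}$ has $p\lambda$ as highest weight, again with multiplicity one. Tensoring with $St_1$, whose highest weight is $(p-1)\rho$, then yields a module whose highest weight is $(p-1)\rho + p\lambda$, occurring with multiplicity one, and with every other weight strictly below.

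Next I would invoke what is already known about $\Phi_{r-1}$: it is an equivalence $\C_{r-1}\simeq \St_r$ and it carries tilting modules to tilting modules. Being an equivalence, it sends the indecomposable $T_{r-1}(\lambda)$ to an indecomposable object of $\St_r \subset \C_r$; combined with the tilting property and the highest-weight computation above, the classification of indecomposable tilting modules recalled in Subsection \ref{P_r} forces
$$\Phi_{r-1}(T_{r-1}(\lambda)) \simeq T_r((p-1)\rho + p\lambda).$$

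The Hom isomorphism is then immediate: any equivalence is fully faithful, so $\Phi_{r-1}$ induces a $k$-linear isomorphism
$$\Hom_{G_{r-1}T}(T_{r-1}(\lambda), T_{r-1}(\mu)) \;\xrightarrow{\ \sim\ }\; \Hom_{G_rT}\!\bigl(\Phi_{r-1}(T_{r-1}(\lambda)), \Phi_{r-1}(T_{r-1}(\mu))\bigr),$$
and by the first part the right-hand side equals $\Hom_{G_rT}(T_r((p-1)\rho+p\lambda), T_r((p-1)\rho+p\mu))$. There is no real obstacle here; the only place one has to be even mildly careful is the weight bookkeeping under the Frobenius $G_rT\to G_{r-1}T$ used to define $(-)^{(1)}$, but this is exactly the convention already fixed in Subsection 4.1.
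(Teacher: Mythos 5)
Your proposal is correct and is essentially the argument the paper intends: the proposition is recorded there without proof as an immediate consequence of the equivalence $\Phi_{r-1}:\C_{r-1}\to\St_r$ preserving indecomposability and tilting modules, combined with exactly your highest-weight computation for $St_1\otimes T_{r-1}(\lambda)^{(1)}$ and full faithfulness for the Hom spaces. Nothing is missing.
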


Finally, we observe that for $r>1$ we have a sequence of full subcategories of $\St_r$
$$ \St_r^r \subset \St_r^{r-1} \subset \cdots \subset \St_r^2 \subset \St_r$$
defined inductively by letting $\St_r^i$ denote the subcategory of $\C_r$,  which via  $\Phi_{r-1}$ is equivalent to the subcategory $\St_{r-1}^{i-1}$ of $\C_{r-1}$.

The smallest of these, $\St_r^r$, is a semisimple category with simple objects $L_r(-\rho + p^r\lambda)$, $\lambda \in X$, compare Remark 5.6(2).

\section{The $SL_2$-case}
In this section we take $G=SL_2$. Then $X = \Z$, $R = \{\pm 2\}$ and we choose $R^+ = \{2\}$ so that $X^+ = \N$. We shall describe the corresponding categories $\P_r$, in particular the homomorphisms in these,  by using the cellular structures discussed in the previous sections. 

Note that in this case the affine Weyl group $W_p$ is the infinite dihedral group with generators $s$ and $t$, the reflections (with respect to the dot actions) in $-1$ and $p-1$, respectively. The alcove $A$ is the interval $[0, p-2]$.

\subsection{r=1} \label{r=1}
There are only two different components of $\P_1$, namely the one corresponding to the orbit $W_p \cdot 0$ and the one corresponding to  $W_p \cdot (-1) \cup W_p \cdot (p-1)$, respectively. The first orbit equals $2p\Z \cup (-2 + 2p\Z)$ and the corresponding block $\P_1(0)$ in $\P_1$ is equivalent to any other regular block (corresponding to another weight in $A$). The second component is $\St_1$, i.e. the semisimple subcategory of $\P_1$ with simple modules $(L_1(-1 +mp))_{m \in \Z}$. 

As we have nothing  more to say about $\St_1$ let us turn to the regular block $\P_1(0)$  associated to $0 \in A$. By Proposition \ref{translated tiltings} we have $T_1(0) = T_{-1}^0 T_1(-1) = T_{-1}^0 \Delta_1(-1) $. Likewise $T_1(2p-2) = T_{p-1}^0 T_1(p-1) = T_{p-1}^0 St_1$. So we see that both $T_1(0)$ and $T_1(2p-2)$ have two $\Delta_1$-factors, namely $\Delta_1(0)$ and $\Delta_1(-2)$, respectively $\Delta_1(2p-2)$ and $\Delta_1(0)$. This allows us to determine all morphisms in $\P_1(0)$ by using the general results from Section \ref{homomorphisms}:

Set $P_m=T_1(0) \otimes 2mp$ and $Q_m = T_1(2p-2) \otimes 2pm$. Write $P = P_0$ and $Q = Q_0$. Then any indecomposable tilting module in $\P_1(0)$ is either isomorphic to $P_m$ or to $Q_m$ for some $m \in \Z$. Using that the dimension of the $\Hom$-space between two tilting modules in $\P_1(0)$ is the number of common $\Delta_1$-factors in their respective $\Delta_1$-filtrations  (all multiplicities are $\leq1$), we get
\begin{equation}
\Hom_{G_1T}(P_m, P_{m'}) \simeq \Hom_{G_1T}(Q_m, Q_{m'}) = \begin{cases} { k^2 \text { if } m = m',} \\ {0 \text { otherwise.} }\end{cases}
\end{equation}
and 
\begin{equation}
\Hom_{G_1T}(P_m, Q_{m'}) \simeq \Hom_{G_1T}(Q_{m'}, P_{m}) =\begin{cases} {k \text { if } |m - m'| = 1}, \\ {0 \text { otherwise.}}\end{cases}
\end{equation}

Let now $u_0$, respectively $u_1$, be a basis element in $\Hom_{G_1T}(P, Q)$, resp. $\Hom_{G_1T}(Q, P_1)$, (in our notation from Section 4 we have $u_0 = \bar g^0(T_1(2p-2)) \in \Hom_{G_1T} (T_1(0), T_1(2p-2))$  and $u_1 = \bar g^{2p-2}(T_1(2p)) \in \Hom_{G_1T}(T_1(2p-2), T_1(2p))$). Then for any $m \in \Z$ we set $u_{2m} = u \otimes 2pm \in \Hom_{G_1T}(P_m, Q_{m})$ and $u_{2m+1} = u_1\otimes 2pm \in \Hom _{G_1T}(Q_m, P_{m+1})$. We let  $d_{2m}$ and $d_{2m+1}$ denote their duals. Note that $u_n$, respectively $d_n$, is the map which takes the top $\nabla_1$-, respectively $\Delta_1$-, factor of its source to the bottom $\nabla_1$-, respectively $\Delta_1$-, factor in its target. So we have the following two exact sequences in $\P_1(0)$:
\begin{equation} \label{longexact}
 \cdots {\overset{u_{-3}} \longrightarrow} P_{-1} {\overset{u_{-2}} \longrightarrow}\ Q_{-1}{\overset{u_{-1}} \longrightarrow} P_0{\overset{u_{0}} \longrightarrow} Q_0 {\overset{u_{1}} \longrightarrow} P_1 {\overset{u_{2}} \longrightarrow} Q_1 {\overset{u_{3}} \longrightarrow} \cdots, 
\end{equation}
and its dual
\begin{equation} \label{duallongexact}
\cdots {\overset{d_{-3}}\longleftarrow} P_{-1} {\overset{d_{-2}}\longleftarrow} Q_{-1}{\overset{d_{-1}}\longleftarrow} P_0 {\overset{d_{0}}\longleftarrow} Q_0 {\overset{d_{1}}\longleftarrow} P_1{\overset{d_{2}}\longleftarrow} Q_1 {\overset{d_{3}}\longleftarrow} \cdots .
\end{equation}

Moreover,  $u_n \circ d_n = d_{n+1} \circ u_{n+1}$ for all $n$ (both these composites map head to socle). Note that $u_n \circ d_n$ is a cellular basis element in its endomorphism ring.

We sum up these findings as follows (using the above notations).

\begin{prop} \label{G_1} The two homomorphisms $u_0$ and $u_1$ generate via formation of direct sums, compositions, taking duals, and tensoring by $\pm 2p$ all morphisms in the regular block $\P_1(0)$ of $\P_1$. The relations satisfied by these generators are:
$$ u_1 u_0 = 0 = (u_0\otimes 2p) u_1 \text { and } u_0  d_0 = d_1  u_1 , \; u_1 d_1 = (d_0  u_0) \otimes 2p.$$
Here $u_0 \otimes 2p = u_2$, $d_0 \otimes 2p = d_2$,  and hence  $ (d_0  u_0) \otimes 2p = d_2  u_2$.
\end{prop}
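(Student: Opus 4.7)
The plan is to derive the proposition from the general cellular structure of Section 5 specialized to $SL_2$, $r = 1$, together with direct verification of the listed relations using the cellular axioms in the proof of Theorem \ref{cellularity for P_r}.

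For the generation statement I invoke the corollary at the end of Section 5: the morphisms in $\P_1(0)$ are generated by $B_1(0) = \{\bar g_i^{x\cdot 0}(T_1(y\cdot 0)) \mid x, y \in W_p,\; x\cdot 0 \leq_{SL} y\cdot 0\}$. For $SL_2$, $W_p \cdot 0 = \{2mp, 2mp-2 \mid m \in \Z\}$, and the $\Delta_1$-filtrations of $P_m = T_1(2mp)$ and $Q_m = T_1(2(m+1)p - 2)$ computed just above the proposition show that $\Delta_1(2mp)$ appears (with multiplicity one) only in $P_m$ and $Q_m$, while $\Delta_1(2mp-2)$ appears only in $Q_{m-1}$ and $P_m$. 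With the convention $\bar g^\lambda(T_1(\lambda)) = id_{T_1(\lambda)}$ from Remark \ref{choice}, the non-identity elements of $B_1(0)$ are exactly the $u_n$ introduced just before the proposition. Since $u_{2m} = u_0 \otimes 2mp$ and $u_{2m+1} = u_1 \otimes 2mp$, and $d_n = u_n^\dagger$, all of $B_1(0)$ is produced from $\{u_0, u_1\}$ by duality and tensoring with elements of $2p\Z$; composition and direct sums then give every morphism.

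For the relations: the vanishings $u_1 u_0 = 0$ and $(u_0 \otimes 2p) u_1 = 0$ are immediate since $\Hom_{G_1T}(P_0, P_1) = 0 = \Hom_{G_1T}(Q_0, Q_1)$ from the dimension formulas preceding the proposition. For the middle relation $u_0 d_0 = d_1 u_1$, both composites lie in $\End_{G_1T}(Q_0)$ which has cellular basis $\{id_{Q_0} = c_{11}^{2p-2}(Q_0,Q_0),\; u_0 d_0 = c_{11}^{0}(Q_0,Q_0)\}$. Clearly $u_0 d_0 \in \T^{<2p-2}$ since it factors through $P_0$ whose weights are all $\leq 0$; my claim is that also $d_1 u_1 \in \T^{<2p-2}$. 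Via cellular axiom (C-III) the $id_{Q_0}$-coefficient of $d_1 u_1$ equals the scalar $r$ satisfying $d_1 \circ g^{2p-2}(P_1) = r \cdot g^{2p-2}(Q_0)$. The map $g^{2p-2}(P_1) : \Delta_1(2p-2) \to P_1$ factors in characteristic $p$ through $L_1(2p-2) \hookrightarrow \Delta_1(2p)$ (the sub $\Delta$-factor of $P_1$), using $u^+ u^- v = 2p\, v \equiv 0 \pmod p$; the dual map $d_1 = u_1^\dagger$ annihilates this image because the bilinear form on $P_1$ restricts trivially to the $L_1(2p-2)$-part of $\Delta_1(2p) \subset P_1$ by the same characteristic-$p$ vanishing. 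Hence $r = 0$, so $d_1 u_1 = c\cdot u_0 d_0$ for some $c \in k$. The scalar $c$ is non-zero because the composite $d_1 u_1$ is visibly non-zero on weight spaces below $2p-2$ (e.g.\ one checks directly that the head-to-socle map of $Q_0$ is reproduced). Rescaling $u_1$ (and correspondingly $d_1$) yields $c = 1$. The relation $u_1 d_1 = (d_0 u_0) \otimes 2p$ now follows either by the analogous cellular argument in $\End_{G_1T}(P_1)$ or, more quickly, by applying the duality $\dagger$ to $u_0 d_0 = d_1 u_1$ and tensoring with $2p$.

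The main obstacle is verifying that the $id_{Q_0}$-component of $d_1 u_1$ vanishes in the cellular expansion: this requires a careful characteristic-$p$ analysis of the lift $g^{2p-2}(P_1)$ and its pairing with $d_1 = u_1^\dagger$, and relies crucially on the vanishing $2p \equiv 0$ in $k$. All other relations then follow from symmetry, duality, rescaling, and tensoring with elements of $2p\Z$.
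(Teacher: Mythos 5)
Your argument reaches the proposition by essentially the same route as the paper: generation is the $SL_2$, $r=1$ specialization of Theorem \ref{generators for G_r} once the $\Delta_1$-factors of $P_m$ and $Q_m$ are known, the vanishing relations follow from the vanishing of $\Hom_{G_1T}(P_0,P_1)$ and $\Hom_{G_1T}(Q_0,Q_1)$, and the loop relations come from the fact that $\End_{G_1T}(Q_0)$ is two-dimensional with $u_0d_0=c^0_{11}(Q_0,Q_0)$ spanning the part below the identity. Where you differ is in how you see that the $id_{Q_0}$-component of $d_1u_1$ vanishes: the paper simply observes that both $u_0d_0$ and $d_1u_1$ send the head of $Q_0$ to its socle and that such maps form a line, whereas you run the cellular axiom (C-III) and analyse the lift $g^{2p-2}(P_1)$. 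Your analysis is correct --- $\Hom_{G_1T}(\Delta_1(2p-2),\Delta_1(2p))\to\Hom_{G_1T}(\Delta_1(2p-2),P_1)$ is an isomorphism of one-dimensional spaces, so $g^{2p-2}(P_1)$ lands in $\soc P_1=L_1(2p-2)$, and $d_1$ kills $\soc P_1$ because its image has simple socle $L_1(0)$ --- and this is a nice explicit instance of the cellular mechanism, though it is heavier machinery than the head-to-socle observation the paper relies on.

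One step is genuinely wrong and should be removed: the proposed shortcut deriving $u_1d_1=(d_0u_0)\otimes 2p$ ``by applying $\dagger$ and tensoring with $2p$''. Both sides of $u_0d_0=d_1u_1$ are $\dagger$-self-dual, since $(u_0d_0)^\dagger=d_0^\dagger u_0^\dagger=u_0d_0$ and likewise for $d_1u_1$, so applying $\dagger$ returns the same identity; and tensoring with $2p$ produces $u_2d_2=d_3u_3$, which is the loop relation at the vertex $Q_1$, not the one at $P_1$. The relation in $\End_{G_1T}(P_1)$ is logically independent and must be proved separately; your fallback (the analogous cellular computation in $\End_{G_1T}(P_1)$, where $u_1d_1=c^{2p-2}_{11}(P_1,P_1)$ spans the part below $id_{P_1}$) does work, so the proof survives. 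Finally, be aware that your rescaling handles only one of the two loop relations: replacing $u_0,u_1$ by $\alpha u_0,\beta u_1$ multiplies both ratios $d_1u_1/(u_0d_0)$ and $d_2u_2/(u_1d_1)$ by the same factor $\beta^2/\alpha^2$, so after normalizing the first scalar to $1$ the second is a genuine invariant whose value still has to be checked; the paper glosses over this point as well, but a complete argument needs to verify the two composites agree on the nose rather than merely up to a nonzero scalar.
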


\begin{rem} 
 The statement in this  proposition about generators for morphisms in $\P_1(0)$  is the easiest case, namely $G= SL_2$ and $r = 1$, of Theorem  \ref{generators for G_r} (restricted to the principal block). The proposition should also be compared to \cite{AT}, Proposition 2.30, which deals with the somewhat similar situation of tilting modules for  $U_q(sl_2)$, $q$ a root of unity in a characteristic zero field.
\end{rem}

\subsection{r $\geq 2$}
First we shall prove that for $SL_2$ we always have the following multiplicity freeness. This result is easy to prove (cf. also the Remark 6.4(1) below) but will nevertheless be important in our approach to finding generators (and  when $r=2$ also relations, cf. Subsection 6.3).
\begin{prop} \label{mult-free}
Let $r \in \Z_{>0}$. Then $(T_r(m):\Delta_r(n)) \in \{0,1\}$ for all $m,n \in \Z$.
\end{prop}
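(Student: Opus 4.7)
The plan is to exploit the reciprocity (\ref{tilt-reciprocity}) and reduce the question to a statement about composition factor multiplicities in $\nabla_r(n)$, which in the $SL_2$ case is forced by one-dimensionality of weight spaces.

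First I would use the bijectivity of $\lambda \mapsto \tilde\lambda$ on $X = \Z$ to write any $m \in \Z$ uniquely as $m = \tilde\lambda$. The reciprocity (\ref{tilt-reciprocity}) then turns the claim into
$$ (T_r(m):\Delta_r(n)) = [\nabla_r(n):L_r(\lambda)], $$
so it suffices to prove that every composition factor multiplicity $[\nabla_r(n):L_r(\lambda)]$ is at most $1$.

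Next I would compute the character of $\nabla_r(n)$ using (\ref{characters of baby-Vermas}) together with the explicit $SL_2$ Weyl character. Since $\rho = 1$ and $\chi((p^r-1)\rho) = e^{p^r-1} + e^{p^r-3} + \cdots + e^{-(p^r-1)}$, the formula becomes
$$ \Cha \nabla_r(n) = \sum_{j=0}^{p^r-1} e^{n-2j}, $$
so every weight of $\nabla_r(n)$ occurs with multiplicity exactly one.

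To conclude I would combine this with the fact that $L_r(\lambda) = L(\lambda^0)_{|_{G_rT}} \otimes p^r\lambda^1$ has one-dimensional highest weight space, since $L(\lambda^0)_{\lambda^0} = k$. Therefore each occurrence of $L_r(\lambda)$ as a composition factor of $\nabla_r(n)$ contributes at least one to $\dim \nabla_r(n)_\lambda$. A multiplicity $\geq 2$ would then force $\dim \nabla_r(n)_\lambda \geq 2$, contradicting the character computation above. There is no serious obstacle here; the only conceptual point is that the whole argument rests on the rank-one feature that all weight spaces of $\nabla_r(n)$ (indeed of every simple $G_rT$-module) are one-dimensional, which is precisely why one expects the multiplicity-freeness to be special to $SL_2$.
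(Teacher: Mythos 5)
Your proof is correct, but it takes a different route from the one in the paper. You reduce the claim via the reciprocity (\ref{tilt-reciprocity}) to the bound $[\nabla_r(n):L_r(\lambda)]\leq 1$, and then obtain that bound from the rank-one fact that $\Cha\,\nabla_r(n)=\sum_{j=0}^{p^r-1}e^{n-2j}$ has all weight multiplicities equal to $1$, so that two copies of $L_r(\lambda)$ would overfill the one-dimensional $\lambda$-weight space. The paper instead argues by induction on $r$: the base case $r=1$ is read off from the explicit description of $\P_1(0)$, and for $r>1$ the equivalence $\Phi_{r-1}:\P_{r-1}\to\P\St_r$ settles the weights $m\in -1+p\Z$, after which Proposition \ref{translated tiltings} produces the $\Delta_r$-factors of a general $T_r(m)$ from those of $T_r(m_1p-1)$ (two factors per $\Delta_r(np-1)$, one in each of the alcoves $np+A$ and $(n-1)p+A$), with the strong linkage principle ruling out collisions. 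In fact the paper explicitly acknowledges your route in Remark 6.4(2) as a valid alternative; the reason it prefers the inductive argument is that it yields, as a byproduct, a concrete recipe for listing the $\Delta_r$-factors of every $T_r(m)$, which is exactly what is needed in the subsequent computations for $\P_2(0)$. Your approach is shorter and makes transparent why the statement is special to $SL_2$, but it gives only the multiplicity bound, not the list of factors.
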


\begin{proof} We shall use induction on $r$. The case $r=1$ is taken care of in the previous subsection. So assume $r > 1$. As the subcategory $\P\St_r$ in $ \P_r$ is equivalent to $\P_{r-1}$ we conclude that the proposition holds for $m \in -1 + p\Z$. So we may assume $m = m_1p + m_0$ with $0\leq m_0 \leq p-2$. Then Proposition \ref{translated tiltings} shows that we may obtain the $\Delta_r$-factors of $T_r(m)$ from the $\Delta_r$-factors of $T_r(m_1p-1)$ as follows. For each $\Delta_r(np-1)$ occurring in $T_r(m_1p-1)$ (this will be with multiplicity $1$ as we have just seen) we get two $\Delta_r$-factors of $T_r(m)$, namely
 those with highest weights in the two alcoves $n p +A$ and $(n-1)p +A$. By the strong linkage principle, see Subsection \ref{SLP}, we get that if $\Delta_r(np-1)$ occurs in $T_r(m_1p-1)$ then $\Delta_r((n\pm1)p - 1)$ do not. Hence $\Delta_r(m')$ cannot occur twice in $T_r(m)$ for any $m' \in \Z$.
\end{proof}

\begin{cor}
Let $r \in \Z_{>0}$ and suppose $m, n \in \Z$. Then $\Hom_{G_rT}(\Delta_r(n), \Delta_r(m))$ has dimension $0$ or $1$.
\end{cor}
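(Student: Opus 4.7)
The plan is to bound the dimension by factoring any map $\Delta_r(n)\to\Delta_r(m)$ through the indecomposable tilting module $T_r(m)$ and then invoking Proposition \ref{mult-free}.

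The key ingredient is an embedding $\Delta_r(m)\hookrightarrow T_r(m)$. Since $m$ is the unique maximal weight of $T_r(m)$ and occurs with multiplicity one in both the weight space and the $\Delta_r$-filtration, a $\Delta_r$-filtration can be arranged with $\Delta_r(m)$ at the bottom; equivalently, by duality the $\nabla_r$-filtration can be arranged with $\nabla_r(m)$ as the top quotient, because all lower $\nabla_r(\mu)$-factors with $\mu<m$ have zero $m$-weight space. Such an embedding is in fact already implicit in the cellular datum of Theorem \ref{cellularity for P_r}, where maps from $\Delta_r(\lambda)$ are routinely lifted to maps from $T_r(\lambda)$.

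Applying $\Hom_{G_rT}(\Delta_r(n),-)$ to this embedding yields an injection
$$\Hom_{G_rT}(\Delta_r(n),\Delta_r(m))\hookrightarrow\Hom_{G_rT}(\Delta_r(n),T_r(m)).$$
Since $T_r(m)$ has a $\nabla_r$-filtration, the $\C_r$-analogue of (\ref{nabla-mult}) computes the right-hand side as $(T_r(m):\nabla_r(n))$. Self-duality of tilting modules ($\,^\tau T_r(m)\simeq T_r(m)$) combined with $\,^\tau\Delta_r(\lambda)\simeq\nabla_r(\lambda)$ identifies this in turn with $(T_r(m):\Delta_r(n))$, which lies in $\{0,1\}$ by Proposition \ref{mult-free}.

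The only non-formal step is producing the embedding $\Delta_r(m)\hookrightarrow T_r(m)$, i.e.\ showing that the maximal-weight factor can be placed at the bottom of a $\Delta_r$-filtration; everything else is routine reciprocity and $\tau$-duality in $\C_r$.
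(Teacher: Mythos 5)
Your proof is correct and follows essentially the same route as the paper's: embed $\Delta_r(m)$ into $T_r(m)$, use left-exactness of $\Hom_{G_rT}(\Delta_r(n),-)$ to bound the dimension by $\dim\Hom_{G_rT}(\Delta_r(n),T_r(m)) = (T_r(m):\Delta_r(n))$, and conclude by Proposition \ref{mult-free}. The only difference is that you spell out the (standard) existence of the embedding $\Delta_r(m)\hookrightarrow T_r(m)$, which the paper takes for granted.
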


\begin{proof} This follows immediately from Proposition \ref{mult-free}: Notice that since $\Delta_r(m) \subset T_r(m)$ we have $\Hom_{G_rT}(\Delta_r(n), \Delta_r(m)) \subset \Hom_{G_rT}(\Delta_r(n), \Delta_r(m))$ and $\dim_k(\Hom_{G_rT}(\Delta_r(n), \Delta_r(m)) = (T_r(m):\Delta_r(n))$.
\end{proof}

\begin{rem}
\begin{enumerate}
\item The composition factor multiplicities $[\Delta_r(m) : L_r(n)]$ are all $0$ or $1$.
This follows  from the proposition by the reciprocity law (\ref{tilt-reciprocity}). Alternatively, it follows from the observation that all weights of $\Delta_r(m)$ have multiplicity $1$. 
\item We could turn the arguments around and use (1) (and its alternative proof) to prove the proposition. The proof we have given has the advantage that it gives a recipe for finding the $\Delta_r$-factors of all $T_r(m)$.
\end{enumerate}
\end{rem}

 As a consequence of Proposition \ref{mult-free} we get that if $(T_r(n):\Delta_r(m))\neq 0$ then there is exactly one cellular basis element in $\Hom_{G_rT}(T_r(m), T_r(n))$ of weight $m$, namely (in our usual notation) $\bar g^m(T_r(n))$. Its dual will be denoted $\bar f^n(T_r(m)) \in \Hom_{G_rT}(T_r(n), T_r(m))
$.  We shall write
$$ u_r(m,n) = \bar g^m(T_r(n)).$$
Recall that by our convention from Remark \ref{choice} we have $u_r(m,m) =  id_{T_r(m)}$. We set
$$B_r = \{u_r(m,n) | 0 \leq m < p^r, \; m \leq n \leq 2p^r-2-m \}.$$
With this notation the result in Theorem \ref{generators for G_r} reads as follows for $G = SL_2$.

\begin{thm}The tuple $(\{P_r(m) \mid 0 \leq m < p^r-1\}, B_r)$ generates $\P_r$ as an additive, $k$-linear category with duality. On the level of morphisms this means that every morphism in $\P_r$ is obtained from $B_r$ by taking direct sums, duals and compositions, and by tensoring with $p^r$. 
\end{thm}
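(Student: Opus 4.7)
The plan is to obtain the theorem as a direct specialization of Theorem~\ref{generators for G_r} to $G = SL_2$, with the multiplicity-free statement Proposition~\ref{mult-free} used to collapse each indexing set $K(T_r(\mu), \lambda)$ to a singleton or the empty set. No new ideas are required; the content of the proof is purely the translation between the general framework and the concrete $SL_2$ data.

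First I would assemble the $SL_2$-specific ingredients: $X = \Z$, $X_r = \{0, 1, \ldots, p^r-1\}$, $\rho = 1$, and $w_0$ acting as $-1$. For $\lambda = m \in X_r$ with decomposition $\lambda^0 = m$, $\lambda^1 = 0$, the formula for $\tilde\lambda$ from Subsection~\ref{P_r} yields
\[
\tilde m = 2(p^r-1) + w_0 m = 2p^r - 2 - m.
\]
Thus the constraint $\lambda \leq \mu \leq \tilde\lambda$ under which $B_r^\lambda(\mu)$ is nonempty in Theorem~\ref{generators for G_r} specializes to $m \leq n \leq 2p^r - 2 - m$, which matches exactly the index range used in the $SL_2$ definition of $B_r$ given immediately above the theorem.

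Next I would invoke Proposition~\ref{mult-free}: since $(T_r(n):\Delta_r(m)) \in \{0,1\}$, the set $K(T_r(\mu), \lambda)$ from the proof of Theorem~\ref{cellularity for P_r} has cardinality at most one, and when nonempty its unique element is $u_r(m,n) = \bar g^m(T_r(n))$. Therefore the general $B_r = \bigcup_{\lambda \in X_r,\, \lambda \leq \mu \leq \tilde\lambda} B_r^\lambda(\mu)$ coincides on the nose with the $SL_2$ set $B_r$ appearing in the theorem. Applying Theorem~\ref{generators for G_r} then delivers the conclusion, since for $SL_2$ one has $p^r X = p^r \Z$, generated together with the duality by the single weight $p^r$ (whose dual shift is $-p^r$), so ``tensoring with $p^r$'' captures all the required shifts. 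The only computation deserving attention is the identification $\tilde m = 2p^r - 2 - m$; beyond this the argument is essentially bookkeeping, and there is no genuine obstacle.
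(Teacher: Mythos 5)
Your proposal is correct and follows exactly the route the paper takes: the paper states this theorem without a separate proof, presenting it as the direct specialization of Theorem \ref{generators for G_r} to $G = SL_2$, with Proposition \ref{mult-free} reducing each index set $K(T_r(n), m)$ to at most one element $u_r(m,n)$ and the computation $\tilde m = 2p^r-2-m$ giving the stated index range. Your write-up simply makes this bookkeeping explicit.
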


If we restricts ourselves to the principal block $\P_r(0)$ in $\P_r$ associated with $0 \in A$ then this theorem combined with the strong linkage principle give.

\begin{cor} \label{generators for P(0)}The set 
$$B_r(0) = \{u_r(m,n) |\; 0 \leq m < p^r, m \leq n \leq 2p^r-2-m \text { and } n,m \equiv_{(2p)} 0, -2 \}$$
generates the morphisms in $\P_r(0)$.
\end{cor}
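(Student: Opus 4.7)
The plan is to derive the corollary as an immediate restriction of Theorem 6.5 to the principal block, with the congruence conditions emerging from an explicit calculation of the orbit $W_p \cdot 0$ for $SL_2$.

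First I would recall that by definition $\P_r(0)$ is the full additive subcategory of $\P_r$ whose indecomposable objects are the $T_r(w \cdot 0)$ for $w \in W_p$, and that Theorem 6.5 already hands us a generating set $B_r$ for $\P_r$ under direct sums, duals, compositions and tensoring by elements of $p^r X$. Thus the task reduces to identifying exactly which elements of $B_r$ (and their $p^r$-shifts) furnish morphisms between objects of $\P_r(0)$. The key tool is the strong linkage principle (equation (5.1) together with (\ref{bounds on Delta-factors})): if $u_r(m,n) \neq 0$ then $(T_r(n):\Delta_r(m)) \neq 0$, which forces $m \in W_p \cdot n$. In particular, if $T_r(n) \in \P_r(0)$ (equivalently $n \in W_p \cdot 0$), then every $m$ for which $u_r(m,n)$ can appear also lies in $W_p \cdot 0$, and conversely. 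So the relevant generators are precisely those $u_r(m,n) \in B_r$ with both $m$ and $n$ in $W_p \cdot 0$.

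Next I would compute $W_p \cdot 0$ explicitly. For $G = SL_2$ we have $\rho = 1$, and $W_p$ is generated by $s$ (reflection at $-1$) and $t$ (reflection at $p-1$), acting on $\Z$ via $s \cdot \lambda = -\lambda - 2$ and $t \cdot \lambda = -\lambda + 2p - 2$. A direct iteration gives $s \cdot 0 = -2$, $t \cdot 0 = 2p-2$, $ts \cdot 0 = 2p$, $st \cdot 0 = -2p$, and so on, so that
$$W_p \cdot 0 \;=\; 2p\Z \,\cup\, (2p\Z - 2) \;=\; \{n \in \Z : n \equiv 0 \text{ or } n \equiv -2 \pmod{2p}\}.$$
Hence the condition $m, n \in W_p \cdot 0$ is exactly the congruence $m, n \equiv_{(2p)} 0, -2$ stated in the corollary.

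Finally I would assemble: the subset of $B_r$ (which already satisfies $0 \leq m < p^r$ and $m \leq n \leq 2p^r - 2 - m$) consisting of those $u_r(m,n)$ with $m,n \equiv_{(2p)} 0, -2$ is precisely $B_r(0)$. By Theorem 6.5 combined with the linkage observation above, every morphism in $\P_r(0)$ can be built from these elements using direct sums, duals, compositions, and tensoring by elements of $p^r X$ (noting that such a tensor product preserves $\P_r(0)$ exactly when the shift $p^r \ell$ lies in $W_p \cdot 0 - W_p \cdot 0$, which for the tiltings with highest weights in $W_p \cdot 0$ poses no obstruction to generating everything inside the block). There is no substantial obstacle here: the corollary is essentially a bookkeeping exercise once the orbit $W_p \cdot 0$ has been identified, and all the analytic content was already absorbed into Theorem 6.5 and the strong linkage principle.
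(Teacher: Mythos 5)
Your proposal is correct and follows essentially the same route as the paper, which derives the corollary by combining Theorem 6.5 with the strong linkage principle; your explicit computation of $W_p\cdot 0 = 2p\Z \cup (-2+2p\Z)$ matches the identification already made in Subsection 6.1, and the congruence conditions in $B_r(0)$ are exactly the statement that $m,n$ lie in this orbit.
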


\subsection{r = 2}
In this last subsection we shall refine the set of generators coming from the $r=2$ case of Corollary  \ref{generators for P(0)}. Then we go on to find the set of relations satisfied by these generators. Our method will be brute force and it not clear how to generalize to higher $r$.

In analogy with the case $r=1$ the strong linkage principle implies that we have the following components of $\P_2$.
\begin{enumerate}
\item The Steinberg component $\P\St_2$.
\item The  $p$-regular blocks $\P_2(n)$ associated to $n \in A$. 
\end{enumerate}

By (\ref{equiv-St}) the Steinberg component is equivalent to $\P_1$. Therefore case (1) is taken care of by Subsection \ref{r=1}.  

So let us consider (2). As all $p$-regular blocks are equivalent we shall only consider $\P_2(0)$. We set 
$$ P_0 = T_2(0), P_1 = T_2(2p-2), P_2 = T_2(2p), \cdots , P_{2p-1} = T_2(2p^2-2),$$
and if $j = i + 2ap$ with $0 \leq i \leq 2p-1$, $a \in \Z$ 
$$ P_j = P_i \otimes 2ap^2.$$
Using the methods in the proof of Proposition \ref{mult-free} we can now determine the $\Delta_2$-factors of all $P_i$. By the above it is enough to consider $0 \leq i < 2p$:
$$ (P_0:\Delta_2(m)) = 1  \text { if } m \in \{0, -2\} \text { and } (P_p:\Delta_2(m)) = 1  \text { if } m \in \{p^2+p-2, p^2-p\}.$$
If $0<i<p$ then
$$ (P_i:\Delta_2(m)) = 1  \text { if } \begin{cases} { i \text { is even and }  m \in \{ip, ip-2, -ip, -ip-2\}}, \\  { i \text { is odd and }  m \in \{(i+1)p -2, (i-1)p, -(i-1)p-2, -(i+1)p\}}. \end{cases}$$

If $p<i<2p$ then 
$$ (P_i:\Delta_2(m)) = 1  \text { if } \begin{cases}  {i \text { is even and }  m \in \{ip, ip-2, (2p-i)p, (2p-i)p-2\}}, \\  {i \text { is odd and }  m \in \{(i+1)p-2, (i-1)p, (2p-i+1)p-2, (2p-i-1)p\}}. \end{cases}$$
All other $\Delta_2$-multiplicities in $P_i$ are zero.

For each $i \in \Z$ we let $\lambda_i$ denote the highest weight of $P_i$, i.e. 
$$ \lambda_i = \begin{cases} {ip \text { if $i$ is even}}\\ {(i+1)p -2\text { if $i$ is odd.}} 
\end{cases}$$ 

The above formulas for the $\Delta_2$-factors of $P_i$ enable us to find the homomorphisms between any two $P_j$ and $P_{j'}$. Among these we shall now single out the following:
$$ u_i = \bar g_i^{\lambda_i}(P_{i+1}), i =-p, -p+1, \cdots, -2, 0,1, \cdots ,p-2, $$
$$ u'_i = \bar g_i^{\lambda_i}(P_{2p-i}), i = -p+1, -p+2, \cdots , -1, 1, 2, \cdots ,p-1,$$

We denote by $d_i$ and $d'_i$ the corresponding dual elements. Then we have. 

\begin{prop} \label{generators in P_2}
The above homomorphisms 
$$\{u_i | -p \leq i \leq p-2, i \neq -1 \} \cup \{u'_i | -p+1 \leq i \leq p-1, i \neq 0 \}$$ 
generate by taking of direct sums, compositions, duals, and by tensoring with $\pm 2p^2$ all morphisms in $\P_2(0)$. 
\end{prop}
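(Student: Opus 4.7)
The plan is to reduce the statement to Corollary~\ref{generators for P(0)} specialized to $r=2$, which already exhibits the cellular basis set $B_2(0)$ as a set of generators (under the listed operations). It then suffices to check that every element $u_2(m,n) \in B_2(0)$ can be assembled from the refined set $\{u_i\}\cup\{u'_i\}$ using direct sums, compositions, duals, and tensoring by $\pm 2p^2$.

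Each $u_2(m,n) = \bar g^{m}(T_2(n))$ goes from some $P_s$ to some $P_t$ (possibly shifted by $\pm 2p^2$). The multiplicity-freeness of Proposition~\ref{mult-free} says that the weight-$m$ summand of $\Hom_{G_2T}(P_s,P_t)$ is at most one-dimensional; consequently, any nonzero morphism of weight $m$ from $P_s$ to $P_t$ must equal a nonzero scalar multiple of $u_2(m,n)$, and scalars are absorbed by $k$-linearity. The task therefore reduces to producing, for each such pair and each common $\Delta_2$-weight $m$, a nonzero composition of the refined generators realizing a map $P_s\to P_t$ that is nonzero on the $\Delta_2(m)$-layer.

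Using the explicit $\Delta_2$-factor tables listed just before the proposition, one would trace, for each pair $(s,t)\in\{0,\ldots,2p-1\}^2$ and each $\Delta_2$-weight $m$ common to $P_s$ and $P_t$, a chain $s=i_0,i_1,\ldots,i_\ell=t$ such that each elementary step $i_j\to i_{j+1}$ is realized by one of the generators (or its dual, or the same after tensoring with a multiple of $\pm 2p^2$), and such that $m$ persists as a $\Delta_2$-weight at every intermediate $P_{i_j}$. Non-vanishing of the resulting composite would then follow, as in the $r=1$ case of Subsection~\ref{r=1}, from the head-to-socle description of each generator: $u_i$ and $u'_i$ send the top of a $\Delta_2$-quotient in the source onto the socle of the matching $\Delta_2$-subobject in the target, and successive such maps remain nonzero as long as the relevant $\Delta_2$-factor persists at every stage. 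The short-range generators $u_i$ handle those cases where $s$ and $t$ lie in the same half of a period, while the long-range generators $u'_i$ are needed precisely to cross between the two halves $\{0,\ldots,p-1\}$ and $\{p,\ldots,2p-1\}$.

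The main obstacle is the finite but lengthy case analysis, branching on the parities of $s$ and $t$ and on whether $m$ lies in the ``left'' or ``right'' part of the $\Delta_2$-factor list. Multiplicity-freeness makes each individual case straightforward, but there is no slicker structural argument in sight: the indices $i\in\{-1,p-1\}$ (for $u_i$) and $i=0$ (for $u'_i$) were omitted from the refined list precisely because the corresponding morphisms are recoverable from alternating chains through the two families $\{u_i\}$ and $\{u'_i\}$.
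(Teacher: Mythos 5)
Your overall strategy matches the paper's: reduce via Corollary \ref{generators for P(0)} (with $r=2$) to showing that each cellular basis element $u_2(m,n)$ lies in the set generated by the refined list, exploit the multiplicity-freeness of Proposition \ref{mult-free} to work up to nonzero scalars, and then run a finite case analysis against the explicit $\Delta_2$-factor tables. The paper organizes that analysis by listing, for each source $P_j$, the at most five targets $P_{j'}$ with $B_2^{\lambda_j}(\lambda_{j'})\neq\emptyset$ and writing out each cellular basis element as an explicit word in the $u_i,u'_i,d_i,d'_i$.

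There is, however, a genuine gap in your non-vanishing step. You claim that a chain of generators is nonzero ``as long as the relevant $\Delta_2$-factor persists at every intermediate $P_{i_j}$.'' That principle is false: in the $\mathfrak{sl}_3$ computation of Section 3 of this same paper one has $d_1u_1=0$ even though the factor $\Delta(w_0)$ occurs in the source $T(w_0)$, the intermediate module $T(st)$, and the target $T(w_0)$ --- so a ``persistent'' chain can still compose to zero, because the image of the first map may land inside a submodule killed by the second. Persistence of a common $\Delta_2$-factor only shows the relevant $\Hom$-space is nonzero; it does not show that your particular composite spans it. The paper avoids this by distinguishing two kinds of composites: those of the form $\bar g\circ\bar f$ (a lift followed by a dual lift), which are literally cellular basis elements and hence nonzero by the cellular basis theorem of \cite{AST}, and the single genuinely new case $u'_{i+1}\circ u_i$ (two lifts in succession), for which a specific structural argument is supplied --- the head $L_2(\lambda_i)$ of the image of $u_i$ cannot be killed by $u'_{i+1}$ because the image of $u'_{i+1}$ must contain the socle of $P_{2p-i-1}$, and that socle is exactly $L_2(\lambda_i)$ since $P_{2p-i-1}=T_2(\lambda_{2p-i-1})\simeq Q_2(\lambda_i)$. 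Your head-to-socle remark gestures at this, but as written it neither isolates which composites actually need an argument nor supplies the socle identification that makes the argument go through.
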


\begin{proof} We shall prove that any cellular basis vector in any $\Hom_{G_2T}(P_j, P_{j'})$ may be expressed - if necessary after tensoring with $\pm 2p^2$ a number of times - as a product of the $u_i, u'_i$ (and their duals $d_i, d'_i$)  listed in the proposition. We may assume that $-p \leq j < p$.

Consider first the case $0 \leq j < p$. Dualizing if necessary we can assume $j \leq j'$.   We now use the general results from Theorem \ref{generators for G_r} combined with the above detailed knowledge of which $\Delta_2$-factors occur in $P_j = T_2(\lambda_j)$. Via the above list of $\Delta_2$-factors of $P_j$ we see that $B_2^{\lambda_j} (\lambda_{j'})$ is empty except for $j' = j, j+1, 2p-j-1, 2p-j, 2p-j+1$ (if $j=0$ only the first three $j'$ occur, and if $j=p-1$ there are only four different such $j'$). We analyze each of these possibilities and claim:
\begin{enumerate}
\item  $B_2 ^{\lambda_j}(\lambda_j) =\{ id_{P_j}, u_{j-1} d_{j-1}, u'_{-j} d'_{-j}, u'_{-j} u_{-j-1} d_{-j-1} d'_{-j}\}$.
\item $B_2^{\lambda_j}(\lambda_{j+1}) = \{u_j, u'_{-j-1} d_{-j-1} d'_{-j}\}.$
\item $B_2^{\lambda_j}(\lambda_{2p-j-1}) = \{ u'_{j+1} u_j\}.$
\item $B_2^{\lambda_j}(\lambda_{2p-j}) = \{u'_j, u'_j u_{j-1} d_{j-1})\}.$
\item $B_2^{\lambda_j}(\lambda_{2p-j+1}) = \{u'_{j-1} d_{j-1}\}.$
\end{enumerate}
Most of the basis vectors listed in these claims come directly from our construction of cellular basis elements. This is the case for the first 3 vectors in (1), the first vector in (2) and (4), and the vector in (5). To see that the remaining vectors are indeed cellular basis vectors we just have to show that 
\begin{equation} \label{non-zero}
 u'_{i+1} u_i \neq 0 \text { for all } i,
\end{equation}
because then these composites as well as their duals are bases for the $1$-dimensional $\Hom$-spaces to which they belong. For instance, the fourth element listed in (1) is (up to a non-zero scalar) equal to $\bar g^{\lambda_{-j-1}}(T_2(\lambda_j)) \bar f^{\lambda_j}(T_2(\lambda_{-j-1})) $.

To check (\ref{non-zero}) we assume $0\leq i < p-1$. Note that $u_i$ is chosen as an extension of $g^{\lambda_i}(T_2(\lambda_{i+1}))$. This homomorphism factors through the inclusion $\Delta_2(\lambda_{i+1}) \hookrightarrow T_2(\lambda_{i+1})$,   because $\Hom_{G_2T}(\Delta_2(\lambda_i), \Delta_2(\lambda_{i+1})) = \Hom_{G_2T}((\Delta_2(\lambda_i), T_2(\lambda_{i+1}))$ (both these $\Hom$-spaces being $1$-dimensional). Likewise, $u'_{i+1}$ is an extension of $g^{\lambda_{i+1}}(T_2(\lambda_{2p-i-1}))$. It will therefore be enough to prove that the composite $g^{\lambda_{i+1}}(T_2(\lambda_{2p-i-1}))\; g^{\lambda_i}(T_2(\lambda_{i+1}))$ is non-zero. As $L_2(\lambda_i)$ is the head of $\Delta_2(\lambda_i)$, it is also the head of the image of $g^{\lambda_i}(T_2(\lambda_{i+1}))$. Now $g^{\lambda_{i+1}}(T_2(\lambda_{2p-i-1}))$ cannot kill this composition factor because the image of $g^{\lambda_{i+1}}(T_2(\lambda_{2p-i-1}))$ must contain the socle of $P_{2p-i-1}$, which is $L_2(\lambda_i)$ (note that $P_{2p-i-1} = T_2(\lambda_{2p-i-1}) \simeq  Q_2(\lambda_i)$ because $\tilde \lambda_i = \lambda_{2p-i-1}$).  

The arguments for $i <0$ are analogous.
\end{proof}

\begin{thm}
The relations satisfied by the generators $u_i, d_i, u'_i, d'_i$ in Proposition \ref{generators in P_2} are the following. 
\begin{enumerate}
\item $u_{i}  u_{i-1} = 0,\;  u_{-i-1} u_{-i-2} = 0  \text { and } d_{i-1} d_{i} = 0, \; d_{-i-2} d_{-i-1} = 0 \text { for }  i=1, \cdots ,p-2.$ 
\item $u'_{i} u'_{-i} = 0, \; d'_{-i-1} d'_{-i} = 0 \text { and } u'_{2p-i} u'_{i} = 0, \; d'_i d'_{2p-i} = 0 \text { for } i=1,2, ,\cdots p-1$.
\item $d_i u_i = u_{i-1}d_{i-1},\;  d_{-i-1} u_{-i-1}  = u_{-i-2} d_{-i-2}   \text { for } i= 1, 2, \cdots , p-2$.
\item $d'_i u'_i  = u'_{-i} d'_{-i}, \; u'_i d'_i = d'_{2p-i} u'_{2p-i} \text { for } i = 1, 2, \cdots , p-1.$
\item Up to non-zero scalars in $k$ we have for $0<i<p-1$
 $$ d_{2p-i-1} u'_i = u'_{i+1} u_i, \; d'_{i} u_{2p-i-1} = d_i d'_{i+1} \text { and } d_i u'_{-i-1} = u'_{-i} u_{-i-1}, \; d'_{-i-1} u_i = d_{-i-1}  d'_{-i}. $$
\item Up to non-zero scalars in $k$ we have for $0<i<p-1$
$$u_{2p-i-1} u'_{i+1} = u'_i d_i , \; d'_{i+1} d_{2p-i-1} = u_i d'_i   \text{ and } u_i u'_{-i} = u'_{-i-1} d_{-i-1}, \; d'_{-i} d_i = u_{-d-1} d'_{-i-1}.$$ 

\end{enumerate}
\end{thm}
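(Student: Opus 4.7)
The plan is to verify each of the relations (1)--(6) by identifying both sides as elements of a specific Hom-space $\Hom_{G_2T}(P_j, P_{j'})$ in $\P_2(0)$, and then to use the explicit cellular basis of that Hom-space computed in the proof of Proposition~\ref{generators in P_2}, together with multiplicity freeness (Proposition~\ref{mult-free}) and the composition axiom C-III from the proof of Proposition~\ref{cellularity for G}, to pin down the relevant scalars. Each such Hom-space has a basis indexed by the common $\Delta_2$-factors of its source and target, each common factor contributing a one-dimensional piece, so the verifications reduce to weight-space bookkeeping once one identifies the correct cellular stratum of each composite.

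The vanishings (1) and (2) are the simplest: reading off the $\Delta_2$-multiplicity lists recorded just before Proposition~\ref{generators in P_2}, one checks that the source and target of each stated composite share no common $\Delta_2$-factor, so the ambient Hom-space is itself zero. For instance $u_i u_{i-1} \in \Hom_{G_2T}(P_{i-1}, P_{i+1})$, and $P_{i-1}$ and $P_{i+1}$ have no common $\Delta_2$-factor; similarly for the other composites in (1) and (2), whose $d$-analogues follow by applying the $\dagger$ functor of Subsection~\ref{tiltcat}.

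For the commuting relations (3) and (4) both sides lie in $\End_{G_2T}(P_i)$ (or $\End_{G_2T}(P_{-i-1})$), whose four-element cellular basis was enumerated in claim (1) of the proof of Proposition~\ref{generators in P_2}. The right-hand side $u_{i-1} d_{i-1}$ is one of these basis vectors, sitting at level $\lambda_{i-1}$. The left-hand side $d_i u_i$ factors through $P_{i+1}$ and is a head-to-socle type composite of $P_i$ (mirroring the $r = 1$ identity $u_0 d_0 = d_1 u_1$ of Proposition~\ref{G_1}); its leading cellular term at level $\lambda_i$ must therefore vanish, because the image lands in the socle of $P_i \simeq Q_2(\widetilde{\lambda_i})$, which is concentrated at weights strictly less than $\lambda_i$. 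The next cellular level is $\lambda_{i-1}$, where the unique basis vector is $u_{i-1} d_{i-1}$; a direct weight-space calculation shows that the associated scalar is non-zero and is normalized to $1$ by the choice of lifts. Relation (4) is proven identically.

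Finally, the mixed relations (5) and (6) reside in non-endomorphism Hom-spaces whose cellular basis at the relevant weight is one-dimensional, by claims (2)--(5) of the proof of Proposition~\ref{generators in P_2}; both sides are therefore automatically proportional, and the common non-vanishing is established by tracing images exactly as in the last paragraph of the proof of that proposition, via the observation that $u_i$ factors through $\Delta_2(\lambda_i) \subset P_{i+1}$, whose simple head $L_2(\lambda_i)$ sits in the socle of the target of $u'_{i+1}$. The resulting proportionality constants are recorded only up to a non-zero scalar, as in the statement. Completeness of (1)--(6) is then shown as in the proof of Theorem~\ref{generators/relations}: the relations suffice to push every $d$-letter past every $u$-letter in an arbitrary monomial, reducing it to a word in $u$'s followed by a word in $d$'s, which by construction is a cellular basis element. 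The main obstacle, as in the $\mathfrak{sl}_3$ case treated earlier, is precisely this exhaustive combinatorial reduction, and it is the reason this method does not admit a straightforward extension to $r > 2$.
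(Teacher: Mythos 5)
Your treatment of (1) and (2) matches the paper's (empty intersection of $\Delta_2$-factor lists, hence zero Hom-spaces), and your completeness sketch follows the same reduction strategy. But there are two genuine gaps in the middle of the argument. First, for (3) and (4) you assert an \emph{exact} equality $d_iu_i=u_{i-1}d_{i-1}$ by noting that the leading cellular term of $d_iu_i$ at level $\lambda_i$ vanishes and then appealing to ``a direct weight-space calculation.'' The endomorphism ring $\End_{G_2T}(P_i)$ has \emph{four} cellular basis elements, $\{id_{P_i},\,u_{i-1}d_{i-1},\,u'_{-i}d'_{-i},\,u'_{-i}u_{-i-1}d_{-i-1}d'_{-i}\}$; cellularity (axiom C-III) only controls the leading term, so after killing the coefficient of $id_{P_i}$ you must still rule out contributions from the two basis elements at levels $\lambda_{-i}$ and $\lambda_{-i-1}$ and show the coefficient of $u_{i-1}d_{i-1}$ is exactly $1$. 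None of this is supplied. The paper avoids the issue entirely by reducing to the already-established $r=1$ relations: for (3) by restricting to $G_1$, where $u_m$ and $d_m$ become $2p$ copies of the $r=1$ maps $u$ and $d$ and the relation $du=u_{-1}d_{-1}$ of Proposition \ref{G_1} applies; for (4) by a \emph{different} mechanism, namely the equivalence $\Phi_1:\C_1\to\St_2$ together with translation onto the block, which transports the long exact sequences (\ref{longexact})--(\ref{duallongexact}) and their relations. Your claim that (4) is ``proven identically'' to (3) therefore both misrepresents the structure of the argument and inherits the same unresolved scalar problem.

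Second, for (5) and (6) the one-dimensionality of the relevant Hom-spaces does make the two sides proportional, but to get a \emph{non-zero} scalar you must show \emph{both} composites are non-zero. The non-vanishing you cite ($u'_{i+1}u_i\neq 0$, via the head of $\Delta_2(\lambda_i)$ meeting the socle of $P_{2p-i-1}$) handles only one side. The other side, e.g.\ $d_{2p-i-1}u'_i\neq 0$ in the first relation of (5) and $u_{2p-i-1}u'_{i+1}\neq 0$ in the first relation of (6), requires a separate argument: the paper identifies the image of $u'_i$ as the submodule $E\subset P_{2p-i}$ built from two specific $\nabla_2$-factors and derives a contradiction with the defining property of $d_{2p-i-1}$ (its composite with the projection onto $\nabla_2(\lambda_{2p-i-1})$ is non-zero) if $d_{2p-i-1}$ were to annihilate $E$. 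Without this second non-vanishing the relations in (5) and (6) could degenerate to $0=c\cdot(\text{non-zero element})$, which is false. You should supply these image computations before the proportionality argument closes.
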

Before giving the proof we illustrate the quiver we are dealing with. In the figure below we have chosen $p=7$. Note that horizontally this quiver is infinite periodic: The first two coulums are obtained from the last two by tensoring with the $1$-dimensional $G_2T$-module $-2p^2$ (remember that $p=7$ in the figure). The next two coulumn to the left are obtained by tensoring with one more copy of  $-2p^2$ and so on. In the same way the figure extends to the right by tensoring the last two coulumns with $2p^2$ repeatedly.

\vfill \eject

\[
\begin{tikzcd}
&P_{-14}\arrow{d}{u_{-14}}&&P_0 \arrow[swap]{d}{u_0} \\
\cdots \arrow{r} & P_{-13} \arrow{r}{u'_{-13}} \arrow[swap]{d}{u_{-13}} & P_{-1} \arrow{r}{u'_{-1}} & P_1 \arrow{r}{u'_1} \arrow[swap]{d}{u_1} & P_{13} \arrow{r}  & \cdots \\
\cdots \arrow{r}  & P_{-12} \arrow{r}{u'_{-12}} \arrow[swap]{d}{u_{-12}} &P_{-2} \arrow{u}{u_{-2}} \arrow{r}{u'_{-2}} & P_{2} \arrow{r}{u'_{2}} \arrow[swap]{d}{u_2} & P_{12} \arrow[swap]{u}{u_{12}} \arrow{r}&{\cdots}\\
\cdots \arrow{r}  & P_{-11} \arrow{r}{u'_{-11}} \arrow[swap]{d}{u_{-11}}  &P_{-3} \arrow{u}{u_{-3}} \arrow{r}{u'_{-3}} & P_{3} \arrow{r}{u'_{3}} \arrow[swap]{d}{u_3}&  P_{11} \arrow[swap]{u}{u_{11}} \arrow{r}&{\cdots}\\
\cdots \arrow{r} & P_{-10} \arrow{r}{u'_{-10}} \arrow[swap]{d}{u_{-10}}  & P_{-4} \arrow{u}{u_{-4}} \arrow{r}{u'_{-4}} & P_{4} \arrow{r}{u'_{4}}\arrow[swap]{d}{u_4} & P_{10} \arrow[swap]{u}{u_{10}} \arrow{r} & {\cdots}\\
\cdots \arrow{r} & P_{-9} \arrow{r}{u'_{-9}} \arrow[swap]{d}{u_{-9}} & P_{-5} \arrow{u}{u_{-5}} \arrow{r}{u'_{-5}} & P_{5} \arrow{r}{u'_{5}} \arrow[swap]{d}{u_5}& P_{9} \arrow[swap]{u}{u_{9}} \arrow{r} & {\cdots}\\ 
\cdots \arrow{r}  & P_{-8} \arrow{r}{u'_{-8}}  & P_{-6} \arrow{u}{u_{-6}} \arrow{r}{u'_{-6}} & P_{6} \arrow{r}{u'_{6}}  &P_{8} \arrow[swap]{u}{u_{8}} \arrow{r} & {\cdots}\\
&& P_{-7} \arrow{u}{u_{-7}} &{} &P_7 \arrow[swap]{u}{u_{7}} &{}
\end{tikzcd}
\]

$$ \text  {\it Quiver for $\P_2(0)$ with $p=7$}$$

\vskip .5 cm
Let us call $u_i$ and $u'_i$ uparrows while $d_i$ and $d'_i$ are called downarrows. Then in terms of the above diagram relations (1) and (2) say that the composite of two consecutive horizontal uparrows are $0$ and that the same is true vertically as well as when we replace up by down. Relations (3) means that horizontal loops at a vertex (paths of length two with the vertex as starting and ending point) are identical, and (4) expresses the same for vertical loops. The first relations in (5) say that the diagrams 
\[
\begin{tikzcd}
P_i \arrow{r}{u'_i} \arrow[swap]{d}{u_i} & P_{2p-i} \arrow{d}{d_{2p-i-1}}\\
P_{i+1} \arrow{r}{u'_{i+1}} & P_{2p-i-1} 
\end{tikzcd}
\]
commute. The other relations in (5) and (6) are equivalent to the commutativity of similar diagrams.

\begin{proof}
The relations (1) and (2) follow from the fact that the corresponding $\Hom$-spaces are zero, cf. Proposition \ref{hom-bounds} and the above determinations of $\Delta_2$-factors of the $P_i$'s.  

The relations in (3) come from the corresponding relations among the $G_1T$-homomorphisms discussed in Subsection 6.1: 
We have $T_2(mp) \simeq T(p)\otimes T_1(m-1)^{(1)}$ and $T_2(mp-2) \simeq T(2p-2) \otimes T_1(m-2)^{(1)}$ for all $m \in \Z$. Here $T(p)$ and $T(2p-2)$ are the indecomposable tilting modules for $G$ with highest weights $p$ and $2p-2$. As $G_1T$-modules they identify with $T_1(p) \simeq T_1(0) \otimes p$ and $T_1(2p-2)$, respectively. As $G_1$ acts trivially on twisted modules we therefore have for all $m$ not divisible by $p$ the following $G_1$-isomorphisms.
$$ P_m|_{G_1} \simeq \begin{cases} { (T(p) \otimes T_1(m-1)^{(1)})|_{G_1} \simeq T_1(0)|_{G_1} \otimes k^{2p} \text { if $m$ is even, }} \\ {(T(2p-2) \otimes (T(m-1)^{(1)})|_{G_1} \simeq T_1(2p-2)|_{G_1} \otimes k^{2p} \text { if $m$ is odd.}} \end{cases} $$ 
Consider the even case. It follows from the above identifications that  the restriction to $G_1$ of $u_m:P_m \rightarrow P_{m+1}$ is $2p$ copies of the map $u: T_1(0) \rightarrow T_1(2p-2)$ considered in Subsection 6.1. Analogously its dual $d_m$ restricts to $2p$ copies of $d$, the dual of $u$.  Similar arguments applies to the restrictions of $u_{m-1}$ and $d_{m-1}$. As the odd case is completely analogous we see that  (3) is a consequence of the relation $d u = u_{-1} d_{-1}$, which holds because of Proposition \ref{G_1}.

To prove the relations in (4) we first recall from (5.3) that the indecomposable tilting modules in $\St_2$ have the form $T_2(p-1 + pm) \simeq St_1 \otimes T(m)^{(1)}$, $m \in \Z$. If we only consider those with $m \in W_p \cdot 0$ we see from (\ref{longexact}) that they fit into a long exact sequence
\begin{equation}\label{St-longexact}
\cdots \rightarrow T_2(p-1-2p^2) \rightarrow T_2(p-1-2p) \rightarrow T_2(p-1) \rightarrow T_2(p-1+p(2p-2)) \rightarrow \cdots.
\end{equation}
When we apply the translation functor onto the block $B_2(0)$ to $T_2(p-1+pm)$ we obtain $T_2(2p-2+pm)$ if $m$ is even, and $T_2(p(m+1))$ when $p$ is odd. In the notation used in this section this means that the sequence (\ref{St-longexact}) gives rise to the long exact sequence
$$ \cdots \rightarrow P_{-2p+1} \rightarrow P_{-1} \rightarrow P_1 \rightarrow P_{2p-1}  \rightarrow \cdots .$$
The homomorphisms occurring in this sequence are $\cdots u'_{-2p-1}, u'_{-2p+1}, u'_{-1}, u'_1, u'_{2p-1}, \cdots $ and in the dual sequence they are the corresponding  $d'_j$. When $i =1$ the relations in (4) are therefore a consequence of the corresponding relations in $\P_1(0)$, see the relation in the paragraph following (\ref{longexact}) and (\ref{duallongexact}).

If in this argument we replace $W_p\cdot 0$ by $W_p \cdot m$ we obtain in the exact same way the relations in (4) for $i = m+1 = 2, 3, \cdots , p-1$.

Let us now prove the first relation in (5): In (\ref{non-zero}) we saw that $u'_{i+1} u_i \neq 0$. We shall see that also $d_{2p-i-1} u'_i \neq 0$. This will prove the relation because $\Hom_{G_2T}(P_i, P_{2p-i-1}) \simeq k$. Our proof of (4) shows that the image of $u'_i$ is the submodule  $E \subset P_{2p-i}$ with $\nabla_2$-factors $\nabla_2(\lambda_{i-1})$ and   $\nabla_2(\lambda_{i})$. So if $d_{2p-i-1}$ kills this image then it must belong to $\Hom_{G_2T}(P_{2p-i}/E, P_{2p-i-1})$. However, the socle $L(\lambda_i)$ of $P_{2p-i-1}$ occurs only once as a composition factor of $P_{2p-i}/E$, so this means that the image of $d_{2p-i-1}$ consists only of the two composition factors $L_2(\lambda_i)$ and $L_2(\lambda_{i-1})$ (the latter being the head of $P_{2p-i}$). But this is inconsistent with the construction of $d_{2p-i-1}$ which says that its composition with the surjection $P_{2p-i-1} \rightarrow \nabla_2(\lambda_{2p-i-1})$ is non-zero. This forces $L_2(\lambda_{2p-i-1})$ to be a composition factor of the image of $d_{2p-i-1}$. 

The second relation in (5) is the dual of the first. The third relation is proved in the same way as the first, and finally the fourth relation is dual to the third.

Finally we prove the first relation in (6) (then the remaining relations in (6) are - just like for (5) - either proved analogously or by duality): We will argue similarly as for the first relation in (5). The image of $u'_{i+1}$ is the submodule $E \subset P_{2p-i-1}$ consisting of the two $\nabla_2$-factors $\nabla_2(\lambda_i)$ and $\nabla_2(\lambda_{i+1})$. If $u_{2p-i-1}$ kills this image then it must belong to  $\Hom_{G_2T}(P_{2p-i-1}/E, P_{2p-i})$. Since the socle of $P_{2p-2}$, namely $L_2(\lambda_{i-1})$, occurs only once in $P_{2p-i-1}/E$ the image of any non-zero map in $\Hom_{G_2T}(P_{2p-i-1}/E, P_{2p-i})$ must have composition factors just $L_2(\lambda_{i-1})$ and  $L_2(\lambda_i)$. This contradicts the fact that $L_2(\lambda_{2p-i-1})$ must occur in the image of $u_{2p-i-1}$, because   $u_{2p-i-1}$ is non-zero on $\Delta_2(\lambda_{2p-i-1})$.

Having established the relations (1) - (6) we have left to show that they suffice, i.e. that any path $\underline p$ in the uparrows and downarrows can be rewritten by these relations in terms of the known basis vectors for the morphisms in $\P_2(0)$. First we observe that relations (3), (4) and (5) allow us to move all downarrows in $\underline p$ to the right of all the uparrows. Suppose now that there are no downarrows in $\underline p$. Then $\underline p$ has length at most $3$. In fact, by (1) and (2) the path will be zero unless it changes direction at all vertices, i.e. it has to alternate between $u$'s and $u'$'s.  Suppose it starts at vertex $i$. Then its first arrow is either $u_i$ of $u'_i$. Assume that $0 \leq i\leq p-1$ We claim that this leads to the following possibilities
\begin{enumerate}
\item $ u_0, u'_1 u_0 \text { if } i = 0,$
\item $u_i, u'_i, u'_{i+1}u_i, u_{2p-i} u'_i,  u_{2p-i-1}u'_{i+1} u_i \text { if } 0 < i < p-1,$ 
\item $u'_{p-1}, u_{p+1} u'_{p-1} \text { if } i = p-1.$
\end{enumerate}
Consider the ``generic" case $0 < i < p-1$. Note that the first $3$ paths in the list here are cellular basis elements. So is the $4$'th according to the first relation in (6). The $5$'th path can be rewritten as follows via the first relations in (6) and (3): $u_{2p-i-1}u'_{i+1} u_i  = u'_i d_i u_i = u'_i u_{i-1} d_{i-1}$. The last expression is a cellular basis element. On the other hand, we cannot extend $  u_{2p-i} u'_i$ non-trivially: by the third relation in (5) (shifted by $2p$) we get  $u'_{2p-i+1} u_{2p-i} u'_i = d_{2p+i} u'_{2p+i} u'_i = 0$. A similar argument shows that the $5$'th path cannot be extended. So the $5$ listed paths exhaust all possibilities. In the two ``non-generic" cases, $i = 0$ and $ i = p-1$ the two given paths clearly are cellular basis elements. The last one cannot be extended, so there are no other paths.

Dualizing these uparrow paths we get the corresponding list of downarrow paths ending at vertex $i$. In the case $0 < i < p-1$ the list contains the following $5$ paths: $ d_i, d'_i, d_i d'_{i+1}, d'_i d_{2p-i}, d_i d'_{i+1} d_{-2p-i-1}.$ We have to show that if we take any  of these $ 5$ downarrow paths and compose with any of the $5$ uparrow paths we get via relations (1)-(6) either $0$ or a path that represents a (linear combination of) cellular basis element(s). We leave this task to the reader (as well as the corresponding ``non-generic" cases) giving only the following $2$ examples: 

Consider $u'_{i+1} u_i d_i d_{i+1}$. This is a cellular basis element in $\End_{G_2T}(P_{2p-i-1})$. In fact, by (\ref{non-zero}) we know that $u'_{i+1} u_i \neq 0$. Hence it equals (up to a non-zero scalar) $\bar g^{\lambda_i}(T_2(\lambda_{2p-i-1}))$. The dual element $d_i d'_{i+1}$ is therefore proportional to $\bar f^{\lambda_{2p-i-1}}(T_2(\lambda_{i}))$ and hence $u'_{i+1} u_i d_i d_{i+1}$ represents the cellular basis element $\bar g^{\lambda_i}(T_2(\lambda_{2p-i-1})) \bar f^{\lambda_{2p-i-1}}(T_2(\lambda_{i}))$.

As the second example we choose $ u_{2p-i} u'_i d_i d'_{i+1} d_{-2p-i-1}$. The first relation in (6) allow us to rewrite our path as $u'_{i-1} d_{i-1}  d_i d'_{i+1} d_{-2p-i-1}$. But this is $0$ because $d_{i-1} d_i = 0$ according to relation (1). 

In the process of checking the remaining $23$ cases the reader may observe that all paths of length more than $4$ are $0$. Moreover, a non-zero paths of length $4$ is a  loop and may be rewritten as a path around any of the four squares in the diagram containing the vertex of the loop.

\end{proof}

\vskip 1 cm 
\end{document}